\newtheorem{theorem}{Theorem}
\newtheorem{lemma}{Lemma}
\newtheorem{definition}{Definition}
\newtheorem{remark}{Remark}
\newtheorem{corollary}{Corollary}
\DeclareMathOperator*{\argmin}{argmin}
\def\balpha{\boldsymbol{\alpha}}
\def\bualpha{\underline{\boldsymbol{\alpha}}}
\def\bmu{\boldsymbol{\mu}}
\newcommand\cF{\mathcal F}
\newcommand\cG{\mathcal G}
\newcommand\cK{\mathcal K}
\newcommand\cL{\mathcal L}
\newcommand\cP{\mathcal P}
\newcommand\cf{\mathfrak f}
\newcommand\cg{\mathfrak g}
\newcommand\cb{\mathfrak b}
\newcommand\bK{\boldsymbol K}
\newcommand\bX{\boldsymbol{X}}
\newcommand\bY{\boldsymbol{Y}}
\newcommand\bZ{\boldsymbol{Z}}
\newcommand\bA{\boldsymbol{A}}
\newcommand\bW{\boldsymbol{W}}
\newcommand\EE{\mathbb E}
\newcommand\PP{\mathbb P}
\newcommand\RR{\mathbb R}
\newcommand\bbA{\mathbb A}
\newcommand{\mixind}{{\hbox{\tiny $\mathrm{MI}$}}}
\newcommand{\mixpop}{{\hbox{\tiny $\mathrm{MP}$}}}
\newcommand{\rmc}{{\hbox{\tiny $\mathrm{C}$}}}
\newcommand{\rmnc}{{\hbox{\tiny $\mathrm{NC}$}}}
\newcommand{\indic}{\mathbbm{1}}
 \def\BIBand{and}%
\title{Cooperation, Competition, and Common Pool Resources\\ in Mean Field Games}
\author{G\"ok\c ce Dayan{\i}kl{\i}\footnote{Department of Statistics, University of Illinois at Urbana-Champaign, 
  Champaign, IL 61820, USA 
  (\href{mailto:gokced@illinois.edu}{gokced@illinois.edu}).}
\and Mathieu Lauri\`ere
    \footnote{Shanghai Frontiers Science Center of Artificial Intelligence and Deep Learning; NYU-ECNU Institute of Mathematical Sciences at NYU Shanghai; NYU Shanghai, 567 West Yangsi Road, Shanghai, 200126, People’s Republic of China, 
  (\href{mailto:ml5197@nyu.edu}{ml5197@nyu.edu}).}
}
\date{} %
\begin{document}

\maketitle

\begin{abstract}
Mean field games (MFGs) have been introduced to study Nash equilibria in very large population of self-interested agents. However, when applied to common pool resource (CPR) games, MFG equilibria lead to the so-called tragedy of the commons (TOTC). Empirical studies have shown that in many situations, TOTC does not materialize which hints at the fact that standard MFG models cannot explain the behavior of agents in CPR games. In this work, we study two models which incorporate a mix of cooperative and non-cooperative behaviors, either at the individual level or the population level. After defining these models, we study optimality conditions in the form of forward-backward stochastic differential equations and we prove that the mean field models provide approximate equilibria controls for corresponding finite-agent games. We then show an application to a model of fish stock management, for which the solution can be computed by solving systems of ordinary differential equations, which we prove to have a unique solution. Numerical results illustrate the impact of the level of cooperation at the individual and the population levels on the CPR.
\end{abstract}

\section{Introduction}
\label{sec:intro}

\subsection{Background}

 The notion of the tragedy of the commons (TOTC) was introduced by~\cite{hardin1968tragedy}  and it states that the individual incentives will result in overusing common pool resources\footnote{These are the resources that are available to all individuals by consumption and depending on the resource of interest, they can be replenished.} which in turn may have detrimental future consequences that affect everyone involved negatively. However, in many real-life situations, this does not happen and researchers such as the Nobel laureate Elinor Ostrom suggested, based on empirical studies, that local governance and mutual restraint by individuals can be the preventing factor~\citep{ostrom1992governing,ostrom1999coping}.
 In the context of fisheries, empirical studies have shown that self-organization of fishermen can help explaining the recovery of overexploited resources without the control of a central planner~\citep{brochier2018can}. The topic has also attracted the interest of the management science community, particularly through studies based on empirical data such as~\citep{moxnes1998not} for fisheries in Norway and~\cite{mcgahan2023there} for the Amazon rainforest preservation.

The possibility of seeing common pool resources being over-exploited is particularly serious when individuals believe their influence is negligible compared with the rest of the population. For example when individual fishermen access to the same pool of fish, each fisherman by herself does not significantly decrease the stock of fish, but if a large number of fishermen simultaneously fish a lot, the impact will be tremendous. Likewise, individuals have, taken separately, very small influence on the total pollution, but the collective effect is significant. In such situations, a common resource (e.g., fish or air quality) is over-exploited by a large group of agents, and no individual can save the common pool by changing only her own behavior. This is perfectly captured by mean field games (MFG), introduced by~\citep{lasry2007mean} and~\citep{huang2006large}. In MFGs, individuals are infinitesimal and fully non-cooperative, so it is expected that TOTC would appear naturally. However, in many real-life scenarios, the population exhibits a mixture of selfishness and altruism. This could help explaining why TOTC is manifested less often than what would be the case with purely self-interested agents. 

Motivated by this, we introduce and study new notions of mean field equilibria to model mixtures of cooperative and non-cooperative behaviors, and we study their application to common pool resources.

\begin{figure*}%
    \begin{center}
    \includegraphics[width=0.8\linewidth]{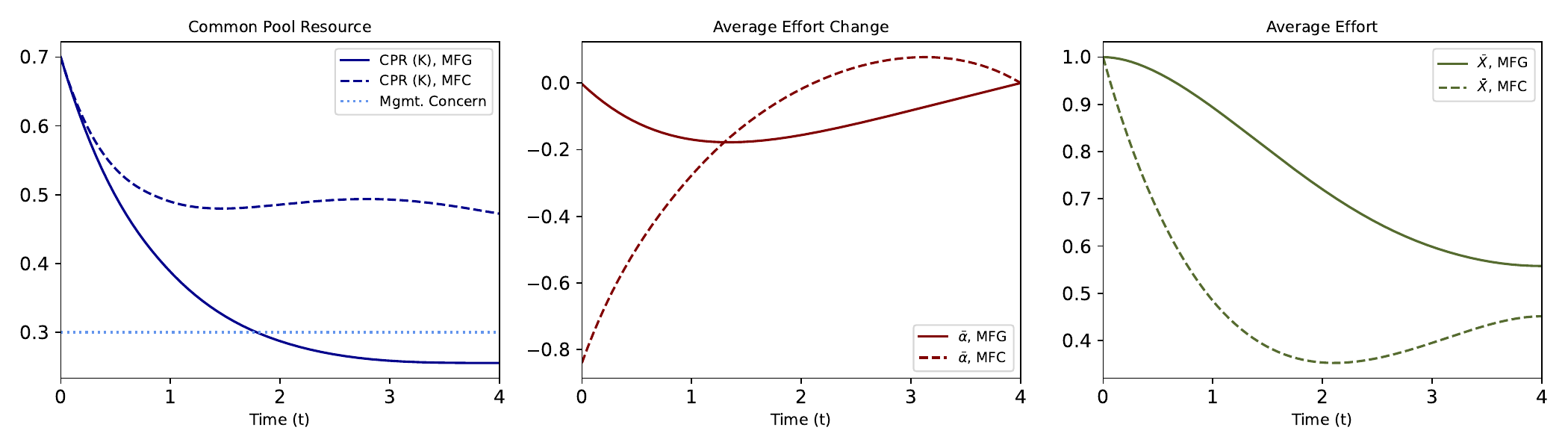}
    \caption{Illustrative example. Evolution of the common pool resource (left), of the average change in effort (middle) and of the average effort (right). In the MFG, agents over-exploit the common pool resource.  }
    \label{fig:mfgVSmfc}
    \end{center}
\end{figure*}

\subsection{Literature review}

Since their introduction, the theory of MFGs have been developed using partial differential equations or stochastic differential equations, see the monographs~\citep{bensoussan2013mean} and~\citep{CarmonaDelarue_book_I} for more background. MFGs have found applications in wide range of fields including management science, such as dynamic auctions~\citep{iyer2014mean}, contract theory~\citep{carmona2021finite,elie2019tale}, pricing and inventory management~\citep{bensoussan2022new}, cryptocurrency mining~\citep{li2024mean}, epidemic control~\citep{aurell2022optimal,elie2020contact}, or energy market regulations~\citep{carbon_StackelbergMFG,shrivats_mfin}. Besides the standard MFG framework which focuses on the notion of Nash equilibrium between purely self-interested players, the scenario with cooperative players looking for a social equilibrium has also been considered under the terminology of mean field control (MFC). We refer to~\citep{bensoussan2013mean} for more details on the theory of MFGs and MFC problems. 
So far, the MFG literature has focused on these two scenarios -- fully cooperative or fully non-cooperative -- with very few exceptions. \cite{carmona2023nash} have studied a model in which one part of the population is non-cooperative and plays a Nash equilibrium while the other part follows a fixed control chosen by a central planner independently of the non-cooperative agents. This is different from our MP model in which the agents of both populations react to each other. \cite{angiuli2022reinforcement,angiuli2023reinforcementmixedmfcg} have introduced a model similar to our MI mean field model but they focus on a specific linear-quadratic (LQ) model and did not study the connection with finite-agent games nor the FBSDE characterization. \cite{li2022dynamic} also considered an LQ model and studied its solvability and the link with finite-agent games using the connection with Riccati equations. In~\cite{dayanikli2024can}, authors studied a class of LQ models for both MI and MP settings, and also solved them using Riccati equations. However, in the present paper, we consider more general models, beyond the LQ case.

One of the most typical example of application motivating the tragedy of the commons is the question of fishing from a common pool of fish. This example has also been revisited in the context of MFGs.  In particular, \cite{McPike_mfg_fish} studied an application to a fisheries model, which inspired the model we propose in Section~\ref{sec:fishing-model}. 
However, they focused on a purely non-cooperative model (i.e., MFG) and did not study mixed models. While this work was completed, \cite{kobeissi2024mean,kobeissi2024tragedy} also studied applications of MFG to fisheries models and the tragedy of the commons. 
\cite{yoshioka2024numerical} proposed a numerical method for an MFG model with a common fishing stock.  However, none of these works studied models with a mixture of cooperation and competition like the ones we propose.

\subsection{Contributions and paper outline}

Our contributions are three-fold. 
First, from the conceptual viewpoint, we introduce  \emph{mixed individual} MFGs and \emph{mixed population} MFGs. The former captures altruistic tendencies at the individual level and the latter models a population that is a mixture of fully cooperative and non-cooperative individuals. Our second contribution is more technical: For both cases, we prove that the mean field model provides a good approximate equilibrium in the corresponding finite-agent game, and we characterize the equilibria using forward-backward stochastic differential equations. 
Lastly, we show how these models can be applied to common pool resources (CPR). After discussing a general CPR model, we study a specific model of fishery, proving the existence and uniqueness of solutions, and illustrating numerically the impact of the altruism level on the tragedy of the commons.

The rest of the paper is organized as follows. In Section~\ref{sec:model}, we introduce both models, for mixed individual and mixed population, without including common pool resources and we give the main results: approximate equilibrium and FBSDE characterization. In Section~\ref{sec:CPR}, we incldude CPRs and present corollaries of the main results in this setting. In Section~\ref{sec:fishing-model}, we study in detail an application to resource management for a fisheries model; in particular, we prove existence and uniqueness of solutions. In Section~\ref{sec:numerics}, we present numerical results and analyze how the new mixed models tackle the tragedy of the commons in this fisheries model.

\section{Mixed mean field models}
\label{sec:model}

In this section, we will introduce two mathematical models to incorporate both competition (i.e., non-cooperation) and cooperation to the problems. The first one is the mixed individual mean field model that assumes each agent has both cooperative and non-cooperative tendencies. The second one is the mixed population mean field model that assumes that the population is a mixture of cooperative and non-cooperative agents. After introducing these two models, we will also introduce how to model common pool resources in the presence of competition and cooperation in Section~\ref{sec:CPR}.

All the models will have finite time horizon where the time horizon is denoted by $T>0$. In the notations, we will use bold letters for functions of time (e.g., stochastic processes or flows of measures).

\subsection{Probabilistic background} 
\label{sec:background-proba}
The space of probability measures on $\mathbb{R}^d$ with a finite second moment is denoted by $\cP_2(\mathbb{R}^d)$. We endow this space with the 2-Wasserstein distance (see e.g.,~\cite[Chapter 5]{CarmonaDelarue_book_I}) where $p$-Wasserstein distance is defined as:
$$
    W_p(\mu,\mu') = \inf_{\pi} \left( \int_{\RR^d \times \RR^d} |x-x'|^p \pi(dx,dx')\right)^{1/p},
$$
where $p\geq1$ and the infimum is taken over $\pi \in \Pi(\mu,\mu')$, with $\Pi(\mu,\mu')$ denotes the set of all couplings of $\mu$ and $\mu'$. When $X$ is a random variable, notation $\mathcal{L}(X)$ is used to denote the law of $X$ and $\EE[X]$ is used to denote the expectation of $X$. The notation $\tilde{\EE}[\varphi(X, \tilde X)]$ means that the expectation is taken (only) over $\tilde{X}$, which is an independent copy of a random variable $X$. Moreover, for an integer $N$, we denote $[N] = \{1,\dots,N\}$. For $i \in [N]$ and for a vector $x$ of length $N$, $x^{-i} = (x^1,\dots,x^{i-1},x^{i+1}, \dots, x^N)$ is the vector of length $N-1$ in which we removed the $i$-th coordinate. When convenient, we will write $(\tilde{x}^i,x^{-i}) = (x^1,\dots,x^{i-1}, \tilde{x}^i, x^{i+1}, \dots, x^N)$.

We will consider a notion of derivatives with respect to measures sometimes referred to as the Lions derivative, which we denote by $\partial_\mu$. A function $U: \cP_2(\RR^d) \to \RR$ is differentiable if there exists a map $\partial_\mu U: \cP_2(\RR^d) \times \RR^d \to \RR$ such that for any $\mu,\mu' \in \cP_2(\RR^d)$, 
\[
\begin{aligned}
    &\lim_{s \to 0^+} \frac{U((1-s)\mu + s\mu') - U(\mu)}{s} 
    = \int_{\RR^d} \partial_\mu U(\mu, x') d(\mu' - \mu)(x').
\end{aligned}
\]
We refer to e.g.,~\cite[Definition 2.2.1]{CardaliaguetDelarueLasryLions} or~\cite[Chapter 5]{CarmonaDelarue_book_I} for more details. 
A simple example is the linear case: when $U$ is of the form $U(\mu) = \int h(x) d\mu(x)$, which can also be written as $\EE[h(X)]$ where the random variable $X$ has distribution $\mu$. In this case, $\partial_\mu U(\mu)(x) = \partial h(x)$, $x \in \RR^d$; see~\cite[Example 1 in Section 5.2.2]{CarmonaDelarue_book_I}.

\subsection{Mixed individual mean field model}
\label{subsec:mi}

We first the setting where each individual has altruistic tendencies in the game model. In the mathematical formulation of the representative agent's model, this setup will manifest itself with the inclusion of two mean field terms. In this section, we start by introducing the finite population model, then we will introduce the limiting mean field model.

\subsubsection{Finite population model}\label{sec:mixed-individual-finite-pop}

We first consider a model with $N$ agents. Given the behavior of other agents, each agent's state dynamics and cost will involve not only the empirical distribution of the population but also her own state distribution. This means that each agent needs to solve a McKean-Vlasov control problem instead of a regular stochastic control problem given the other agents' behaviors.

We assume that the problem is set on a complete filtered probability space that is denoted by $(\Omega, \mathcal{F}, \mathbb{F} = (\mathcal{F}_t)_{t\in[0,T]}, \mathbb{P})$ supporting an $m$-dimensional Wiener process $\bW^i = (W^i_t)_{t\in[0,T]}$. Agent $i$ has a continuous $\RR^d$-valued state process $\bX^i = (X^i_t)_{t\in[0,T]}$ and $\RR^\ell$-valued control process $\balpha^i = (\alpha^i_t)_{t\in[0,T]}$. The control process is assumed to be square-integrable and $(\mathcal{F}_t)_{t\in[0,T]}$-progressively measurable. We denote the collection of such control processes by $\bbA$. We denote by $\mu^i_t = \cL(X^i_t)$ the distribution of the state of agent $i$. We denote by $\mu^N_t = 
\frac{1}{N} \sum_{j=1}^N \delta_{X^j_t}$ the empirical distribution of the population. Typically, the empirical distribution can be observed while the individual's distribution is not necessarily observable, but here (as usual in MFG literature) we assume complete information and perfect rationality, so the agent knows their own distribution and the way it impacts their dynamics and cost. We assume that the agents' states at initial time are i.i.d. with distribution $\mu_0$, i.e., $ X^{i}_0 = \zeta \in L^2(\Omega,\mathcal F_0, \mathbb{P}; \mathbb{R}^{{d}})$ where $\mu_0 = \mathcal{L}(\zeta)$. 

Assume $\bX^i = (X^i_t)_{t \in [0,T]}$ evolves according to the following state dynamics:
\begin{equation*}
    d X^{i}_t = b(t, X^{i}_t, \alpha_t^i, \mu^N_t, \mu_t^{i})dt + \sigma dW_t^i,
\end{equation*}
where the function $b : [0,T] \times \RR^d \times \RR^\ell \times \cP_2(\RR^d) \times \cP_2(\RR^d) \rightarrow \RR^d$ represents the drift of the state of the representative agent, $\sigma \in \RR^{d \times m}$ is the constant volatility, $\bW^i=(W_t^i)_{t\in[0,T]}$ is an $m$-dimensional Wiener process, independent of all the other Wiener processes, $\bW^{-i}$, which represents the idiosyncratic noise in the model. The model could be extended to more general settings (e.g., non-constant volatility which can be implemented as a function of time, state, and the mean field or interactions through the joint distribution of controls and states) in a straightforward way, but we leave such extensions for future work.

Notice that $\bX^i$ depends on the controls $\bualpha = (\balpha^1,\dots,\balpha^N)$ used by all the agents, since they influence the empirical distribution $\bmu^N$.  
To stress the dependence on the controls, we will sometimes write $\bX^{i, \bualpha}$. Similarly, we will sometimes write $\mu^N_t = \mu^{N,\bualpha}_t$ for the empirical distribution and $\mu^{i}_t = \mu^{i, \bualpha}_t=\cL(X^{i, \bualpha}_t)$ for the individual distribution.  We will refer to $\bualpha$ as a control profile. 

Given the controls $\bualpha^{-i}$ used by other agents, agent $i$'s goal is to minimize over $\balpha^i$ the cost:
\begin{equation}
\begin{aligned}
    &J^{\mixind,N}(\balpha^i;\bualpha^{-i}) 
    = \EE \Big[\int_0^T f(t, X_t^{i,\bualpha}, \alpha^i_t, \mu^{N,\bualpha}_t, \mu_t^{i, \bualpha})dt 
    + g(X_T^{i,\bualpha}, \mu^{N,\bualpha}_T, \mu_T^{i, \bualpha})\Big].
\end{aligned}
\end{equation}
 Here, the function $f : [0,T] \times \RR^d \times \RR^\ell \times \cP_2(\RR^d) \times \cP_2(\RR^d) \rightarrow \RR$ represents the running cost that depends on the agent's own state (induced by the control profile) $X^{i,\bualpha}_t$, the agent's own action $\alpha^i_t$, the population's empirical distribution $\mu^{N, \bualpha}_t$ and the distribution of the agent's state $\mu_t^{i, \bualpha}$. The function $g : \RR^d \times \cP_2(\RR^d) \times \cP_2(\RR^d) \rightarrow \RR$ represents the terminal cost that depends on the same variables at terminal time $T$, except it does not depend on the control. %

Next, we define the notion of Nash equilibrium in the mixed individual finite population game. 

\begin{definition}%
\label{def:mixed_individual_finitepop_nash}
For $\epsilon>0$, an {\bf $\epsilon$-Nash equilibrium} for the $N$-agent mixed individual game is a control profile $\hat\bualpha$ such that, for every $i$ and every $\balpha^i$, 
\[
    J^{\mixind,N}(\hat\balpha^i;\hat\bualpha^{-i})
    \le 
    J^{\mixind,N}(\balpha^i;\hat\bualpha^{-i}) + \epsilon.
\]
A {\bf Nash equilibrium} is an $\epsilon$-Nash equilibrium with $\epsilon=0$. 
\end{definition}
Intuitively, replacing $\hat\alpha^i$ by $\alpha^i$ strongly affects the individual's distribution flow $\bmu^{i, \bualpha}$ but much less so the population's empirical distribution flow $\bmu^{N, \bualpha}$. In fact, when $N$ goes to infinity, we expect an individual's control to have zero influence on the population's distribution flow. This is the motivation behind the mixed individual mean field problem that is introduced next.

\subsubsection{Mean field model}
\label{sec:mixed-individual-mf}

Now, we let $N \rightarrow \infty$. Since every agent is assumed to be identical, we can focus on a representative agent. We assume that the problem is set on a complete filtered probability space that is denoted by $(\Omega, \mathcal{F}, \mathbb{F} = (\mathcal{F}_t)_{t\in[0,T]}, \mathbb{P})$ supporting a $m$-dimensional Wiener process $\bW = (W_t)_{t\in[0,T]}$. =

Given a population distribution flow $\bmu$, the aim of the representative agent is to minimize the following cost over her control process $\balpha\in \bbA$ where we remind that $\bbA$ denotes the set of control processes that are square-integrable and $(\mathcal{F}_t)_{t\in[0,T]}$-progressively measurable:
\begin{equation}
\begin{aligned}
    J(\balpha;\bmu) 
    &:= \EE \Big[\int_0^T f(t, X_t^{\balpha}, \alpha_t, \mu_t, \mu_t^{\balpha})dt
    + g(X_T^{\balpha}, \mu_T, \mu_T^{\balpha})\Big],
\end{aligned}
\end{equation}
where $\mu_t^{\balpha} = \cL(X_t^{\balpha})$ for all $t\in[0,T]$, and the running cost $f$ and the terminal cost $g$ are defined as in Section~\ref{sec:mixed-individual-finite-pop}. 
The representative 
agent's state dynamics $\bX^{\balpha}$ evolves according to the following dynamics:
\begin{equation}
    d X^{\balpha}_t = b(t, X^{\balpha}_t, \alpha_t, \mu_t, \mu_t^{\balpha})dt + \sigma dW_t,
\end{equation}
where the drift $b$ and the volatility $\sigma$ are defined as in Section~\ref{sec:mixed-individual-finite-pop}. %
The initial condition of the state process of the representative agent is given as $ X^{\balpha}_0 = \zeta \in L^2(\Omega,\mathcal F_0, \mathbb{P}; \mathbb{R}^{{d}})$ where $\mu_0 = \mu_0^{\balpha}= \mathcal{L}(\zeta)$.
\begin{definition}[MI-MFNE]\label{def:mixed_individual_mfg_nash} We will call $(\hat{\balpha}, \hat{\bmu})$ a {\bf mixed individual mean field Nash equilibrium} (MI-MFNE) if
\begin{itemize}
    \item[i.] $\hat \balpha$ is the best response of the representative agent given the mean field distribution $\hat{\bmu}$. In other words, $\hat{\balpha} \in \argmin_{\balpha\in \bbA} J(\balpha;\hat{\bmu})$,\vskip2mm
    \item[ii.] For all $t\in[0,T]$, we have $\hat{\mu}_t = \mu^{\hat{\balpha}}_t$.
\end{itemize}
\end{definition}
The first condition in Definition~\ref{def:mixed_individual_mfg_nash} requires us to solve a McKean-Vlasov stochastic optimal control problem given $\hat{\bmu}$ since the problem involves the distribution of the state $(\mu_t^{\balpha})_{t\in[0,T]} = \cL(X_t^{\balpha})_{t\in[0,T]}$ which is the main difference from the regular MFG problems. In the regular MFG, the first condition results in solving a regular stochastic optimal control problem. The second condition in Definition~\ref{def:mixed_individual_mfg_nash} is the fixed point condition that comes from the definition of Nash equilibrium and the underlying assumption of agents being identical.
\begin{remark}
    We would like to emphasize that this model is the generalized version of $\lambda$-interpolated mean field games that is introduced in~\citep{carmona2023nash} and analyzed in the linear-quadratic form in~\citep{dayanikli2024can}.  If the running cost, the terminal cost, and the drift have the following special forms in our model, we recover the $\lambda$-interpolated mean field games given in~\citep{carmona2023nash}:
    \begin{equation*}
    \begin{aligned}
        f(t, x, \alpha, \mu, \mu')&:= \frac{1}{2}\alpha^2 +  \lambda f_0(x, \alpha, \mu)
        +(1-\lambda) f_0(x, \alpha, \mu'),
        \\
        g(x, \mu, \mu')&:=\lambda g_0(x, \mu)+(1-\lambda) g_0(x, \mu'),
        \\
        b(t, x, \alpha, \mu, \mu') &:= \alpha.
    \end{aligned}
    \end{equation*}
    In this case $\lambda \in [0,1]$ is the level of selfishness, and $(1-\lambda)\in[0,1]$ is the level of altruism of the representative agent. When $\lambda=1$, the model corresponds to the fully game theoretical problem where the agents are non-cooperative (i.e., MFG). When $\lambda=0$, the model corresponds to the control problem where the agents are cooperative (i.e., MFC).
\end{remark}

\subsubsection{Approximate equilibrium result}
\label{subsubsec:app_eq_mi}

For simplicity in the presentation, we focus on the one-dimensional case ($d=m=\ell=1$).

\begin{theorem}[$\epsilon$-Nash for MI-MF]
\label{thm:MI-eps-Nash}
Under technical conditions stated in Appendix~\ref{app:MI-eps-Nash}, the following holds.
 Let $(\hat{\balpha}, \hat{\bmu})$ be a mixed individual mean field Nash equilibrium as in Definition~\ref{def:mixed_individual_mfg_nash}.  
  Then 
  for every $\epsilon>0$, there exists $N$ large enough such that $(\hat{\balpha}, \hat{\bmu})$ is an $\epsilon$-Nash equilibrium for the $N$-agent mixed individual game as in Definition~\ref{def:mixed_individual_finitepop_nash}. 
\end{theorem}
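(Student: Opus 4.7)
The plan is to transfer optimality from the mean field problem to the $N$-agent game via a propagation-of-chaos argument that carefully handles the unilateral deviation of a single agent. Fix the MI-MFNE $(\hat{\balpha},\hat{\bmu})$, an arbitrary agent index $i$, and an arbitrary admissible deviation $\balpha^i \in \bbA$, while all other agents use independent copies of $\hat{\balpha}$ (say realized as a feedback function of each agent's state). Three cost comparisons must be chained: $J^{\mixind,N}(\hat{\balpha};\hat{\bualpha}^{-i}) \approx J(\hat{\balpha};\hat{\bmu})$, $J^{\mixind,N}(\balpha^i;\hat{\bualpha}^{-i}) \approx J(\balpha^i;\hat{\bmu})$, and the mean field optimality $J(\hat{\balpha};\hat{\bmu}) \le J(\balpha^i;\hat{\bmu})$, which is given for free by Definition~\ref{def:mixed_individual_mfg_nash}.

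For the first approximation, when every agent plays $\hat{\balpha}$, the states $(\bX^{i,\hat{\bualpha}})_{i=1}^N$ are exchangeable McKean--Vlasov diffusions whose common law is exactly $\hat{\bmu}$. Coupling each $\bX^{i,\hat{\bualpha}}$ with an independent copy $\bY^i$ of the mean field solution $\bX^{\hat{\balpha}}$, a Gronwall argument using the Lipschitz regularity of $b$ in $(x,\mu,\mu')$ combined with the Fournier--Guillin rate for the empirical approximation of $\hat{\mu}_t$ by $\frac{1}{N}\sum_j \delta_{Y^j_t}$ yields
\[
\sup_{t \in [0,T]} \EE\bigl[W_2(\mu^{N,\hat{\bualpha}}_t,\hat{\mu}_t)^2\bigr] \le \varepsilon_N,
\]
with $\varepsilon_N \to 0$. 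Under deviation, agent $i$'s contribution to the empirical measure is of size $1/N$, so the same coupling (now comparing $\bX^{i,(\balpha^i,\hat{\bualpha}^{-i})}$ with the mean field state $\bX^{\balpha^i}$ driven by $\balpha^i$ against the \emph{frozen} flow $\hat{\bmu}$) still gives $\EE[W_2(\mu^{N,(\balpha^i,\hat{\bualpha}^{-i})}_t,\hat{\mu}_t)^2] \le \varepsilon_N + C/N$ and, crucially, $\EE[\sup_{t \le T} |X^{i,(\balpha^i,\hat{\bualpha}^{-i})}_t - X^{\balpha^i}_t|^2] = o(1)$, so that the individual flow $\mu^{i,(\balpha^i,\hat{\bualpha}^{-i})}_t$ is $W_2$-close to the mean field individual flow $\mu^{\balpha^i}_t$.

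Plugging these estimates into the cost functionals and invoking the Lipschitz regularity of $f$ and $g$ in $(x,\mu,\mu')$ delivers $|J^{\mixind,N}(\balpha^i;\hat{\bualpha}^{-i}) - J(\balpha^i;\hat{\bmu})| \le \eta_N$ and similarly with $\hat{\balpha}$ in place of $\balpha^i$, where $\eta_N \to 0$. Chaining with mean field optimality then yields $J^{\mixind,N}(\hat{\balpha};\hat{\bualpha}^{-i}) \le J^{\mixind,N}(\balpha^i;\hat{\bualpha}^{-i}) + 2\eta_N$, and choosing $N$ large enough that $2\eta_N \le \epsilon$ completes the proof.

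The main obstacle is that $\eta_N$ must be uniform over the deviation $\balpha^i$, which ranges over an infinite-dimensional set of progressively measurable processes; the estimates above produce constants that a priori depend on $\EE\int_0^T|\alpha^i_t|^2 dt$. The standard remedy is to restrict without loss of generality to deviations satisfying $J^{\mixind,N}(\balpha^i;\hat{\bualpha}^{-i}) \le J^{\mixind,N}(\hat{\balpha};\hat{\bualpha}^{-i}) + 1$, since any other deviation is worse than $\hat{\balpha}$ by more than the target precision. Within this class, coercivity of $f$ in $\alpha$ (assumed in the technical conditions of Appendix~\ref{app:MI-eps-Nash}, typically quadratic growth in $\alpha$) provides a uniform $L^2$ bound on $\balpha^i$, hence uniform $L^2$ bounds on $\bX^{i,(\balpha^i,\hat{\bualpha}^{-i})}$ and $\bX^{\balpha^i}$, making the Lipschitz-plus-Gronwall estimates uniform. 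The \emph{double} measure dependence characteristic of the MI model---the simultaneous appearance of $\mu$ and $\mu'$ in $b, f, g$---merely doubles the bookkeeping compared with the classical MFG $\epsilon$-Nash proof, but introduces no new conceptual difficulty: the population argument is controlled by propagation of chaos, while the individual argument is controlled by the single-particle coupling.
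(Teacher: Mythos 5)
The proposal is correct and follows essentially the same architecture as the paper's proof: the two cost-transfer estimates sandwiching the mean field optimality inequality, the without-loss-of-generality $L^2$ bound on the deviating control obtained from coercivity of the running cost, and a Fournier--Guillin-type propagation-of-chaos bound on the empirical measure. The only divergence is technical rather than conceptual: the paper works under the simplifying assumptions $b(t,x,a,\mu,\mu')=a$ and bounded $F,g$, so the state coupling you handle by Gronwall is trivial there (the deviator's state does not depend on the others' controls at all), and the measure-approximation error is controlled by truncating the state at level $1/\sqrt{\epsilon}$ in $W_1$ rather than by your direct $W_2$-Lipschitz estimate.
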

The proof is provided in Appendix~\ref{app:MI-eps-Nash}. 

When the number of agents increases, the finite agent problem will become very hard to solve. Informally, the above approximation result motivates us to focus on solving the MI-MF problem instead of its finite agent counterpart.

\subsubsection{Equilibrium characterization result}

We introduce the following Hamiltonian  for the mixed individual game: 
\[
\begin{aligned}
    &H^\mixind(t, x, \alpha, \mu, \mu', y, z) 
    := b(t, x, \alpha, \mu, \mu') \cdot y 
     + f(t, x, \alpha, \mu, \mu' ) .
\end{aligned}
\]
Following Section~\ref{sec:background-proba}, we denote the Lions  derivative with respect to the second distribution by $\partial_{\mu'} H$.

\begin{theorem}[FBSDE for MI-MFNE]
\label{thm:MI-Pontryagin}
Under technical assumptions stated in Appendix~\ref{app:MI-Pontryagin}, the following holds. 
$(\hat{\balpha}, \hat{\bmu})$ is a mixed individual mean field Nash equilibrium as in Definition~\ref{def:mixed_individual_mfg_nash} if and only if it satisfies:
\[
\begin{cases}
    H^\mixind(t, X_t, \hat\alpha_t, \mu_t, \mu_t, Y_t, Z_t)
    = \inf_{\alpha} H^\mixind(t, X_t, \alpha, \mu_t, \mu_t, Y_t, Z_t)
    \\ 
    \hat\mu_t = \mu_t, 
\end{cases}
\]
where $\mu_t = \cL(X_t)$ and $(\bX, \bY, \bZ)$ solve the MKV FBSDE:
\begin{equation}
\label{eq:FBSDE-MI}
    \begin{cases}
        dX_t = b(t, X_t, \hat{\alpha}_t, \mu_t, \mu_t) dt + \sigma dW_t
        \\
        dY_t 
        = \Big(- \partial_x H^\mixind(t, X_t, \hat{\alpha}_t, \mu_t, \mu_t, Y_t, Z_t) 
        - \tilde\EE\left[\partial_{\mu_2} H^\mixind(t, \tilde{X}_t, \tilde{\hat{\alpha}}_t, \mu_t, \mu_t, \tilde{Y}_t, \tilde{Z}_t)(X_t) \right] \Big) dt 
        + Z_t dW_t
        \\
        X_0 \sim \mu_0, \qquad Y_T = g(X_T, \mu_T, \mu_T),
    \end{cases}
\end{equation}
where we recall that $\tilde\EE$ denotes an expectation on the tilded variables only.  
    
\end{theorem}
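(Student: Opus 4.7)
The plan is to decouple the equivalence into two ingredients: a stochastic maximum principle (SMP) for the representative agent's McKean--Vlasov best-response problem given a fixed exogenous population flow, and then the enforcement of the fixed-point consistency $\hat\mu_t = \mu^{\hat\balpha}_t$.

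First, I would fix an arbitrary admissible flow $\bmu \in C([0,T];\cP_2(\RR))$ and analyze the inner problem $\inf_{\balpha \in \bbA} J(\balpha;\bmu)$. Because both the drift $b(t,\cdot,\cdot,\mu_t,\mu^\balpha_t)$ and the running cost $f(t,\cdot,\cdot,\mu_t,\mu^\balpha_t)$ depend on the agent's own law $\mu^\balpha_t=\cL(X^\balpha_t)$, this is a genuine McKean--Vlasov control problem in the sense of~\citep{CarmonaDelarue_book_I}, with the additional twist that $\mu_t$ enters as a frozen parameter. The necessary direction is obtained by computing the G\^ateaux derivative of $\balpha \mapsto J(\balpha;\bmu)$ along admissible perturbations $\balpha^\eta = \hat\balpha + \eta\bbeta$; Lions differentiability of $f,g$ in their last measure slot produces a term involving $\tilde\EE[\partial_{\mu'} f(t,\tilde X_t,\tilde{\hat\alpha}_t,\mu_t,\mu^{\hat\balpha}_t)(X_t)\cdot V_t]$ in addition to the standard pointwise contributions, where $V$ is the tangent process solving the linearized state equation driven by $\bbeta$. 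Choosing $(\bY,\bZ)$ to solve the BSDE in~\eqref{eq:FBSDE-MI}, an It\^o integration-by-parts makes the tangent process cancel and forces the pointwise Hamiltonian minimization at $\hat\alpha_t$. Crucially, no derivative with respect to the first (frozen) measure slot appears, since the agent does not control $\bmu$; this is exactly why only $\partial_{\mu_2} H^\mixind$ (and not $\partial_\mu H^\mixind$) enters the adjoint drift.

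Next, I would impose the MI-MFNE fixed-point $\hat\mu_t = \mu^{\hat\balpha}_t = \cL(X_t)$, which collapses the two measure slots in the necessary conditions to a common flow $\mu_t$ and yields precisely the system~\eqref{eq:FBSDE-MI}. For the sufficiency direction, under the technical assumptions of Appendix~\ref{app:MI-Pontryagin}, which I expect to include joint convexity of $H^\mixind$ in $(x,\alpha,\mu')$ and convexity of $g$ in $(x,\mu')$, a standard convex-duality argument shows $J(\balpha;\hat\bmu) - J(\hat\balpha;\hat\bmu) \ge 0$ for any admissible $\balpha$, by expanding the difference via It\^o's formula applied to $(X^\balpha_t - X_t)Y_t$ and invoking the Hamiltonian minimality at $\hat\alpha_t$; the McKean--Vlasov cross terms on $\mu'$ are absorbed exactly by the correction term $\tilde\EE[\partial_{\mu_2} H^\mixind(\cdots)]$ in the adjoint drift.

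The main obstacle is the variational calculus tied to the McKean--Vlasov structure: perturbing $\balpha$ simultaneously perturbs the state $\bX^\balpha$ and its own law $\bmu^\balpha$, so the first-order expansion of $J$ produces coupled linearized terms that must be handled by exploiting Lions differentiability to rewrite the measure-derivative contributions as expectations over an independent copy $(\tilde X, \tilde Y, \tilde Z, \tilde{\hat\alpha})$. Additional care is needed to differentiate only with respect to the agent's own distribution $\mu'$ and not with respect to the exogenous mean field $\mu$; this asymmetry is precisely what distinguishes the MI setting from the fully cooperative MFC case, in which both measure slots would be differentiated. Modulo this distinction, the argument closely follows the McKean--Vlasov SMP template of~\citep{CarmonaDelarue_book_I}.
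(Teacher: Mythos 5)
Your proposal is correct and follows essentially the same route as the paper: fix the exogenous population flow, apply the McKean--Vlasov stochastic maximum principle to the resulting control problem (which is why only the second measure slot is differentiated in the adjoint equation), and then impose the consistency condition $\hat\bmu = \bmu^{\hat\balpha}$. The only difference is that the paper directly invokes Proposition 6.15 and Theorem 6.16 of \citet{CarmonaDelarue_book_I} for both the necessary and sufficient directions, whereas you sketch re-deriving that SMP via the G\^ateaux-derivative and convex-duality arguments; the content is the same.
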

The proof is provided in Appendix~\ref{app:MI-Pontryagin}. 
We stress that, although at equilibrium the two distributions coincide and are equal to $\bmu$, the Hamiltonian's derivative is with respect to the second distribution only (i.e., the one corresponding to the individual's distribution). This is different from the FBSDE system arising in MFGs (see e.g. \cite[Theorem 3.17]{CarmonaDelarue_book_I}) and is reminiscent of the FBSDE system of MFC (see e.g. \cite[System (6.31)]{CarmonaDelarue_book_I}). 

Informally, the above result says that in order to find the MI-MFNE, we can focus on solving the FBSDE system given in~\eqref{eq:FBSDE-MI}.

\subsection{Mixed population mean field model}

Second, we introduce a model where there are two types of agents in the population, cooperative and non-cooperative. As we did in Section~\ref{subsec:mi}, we start by introducing the finite population model, then we will introduce the limiting mean field model. Since we have two types of agents (cooperative and non-cooperative), in the mean field model we will introduce the models of the \textit{representative} cooperative and non-cooperative agents separately.

\subsubsection{Finite population model}
\label{sec:mixed-population-finite-pop}

In the population, we consider two groups respectively with $N^{\rmnc}$ and $N^{\rmc}$ individuals. The former have a non-cooperative mindset while the latter are cooperative. We denote by $N = N^{\rmnc} + N^{\rmc}$ the total number of agents in the population.

In this model, each agent's state dynamics and cost will involve the empirical distribution of each group of agents. The non-cooperative agents will solve a Nash equilibrium (while taking into account the coalition of cooperative agents), while the cooperative agents will solve a social optimum problem (while taking into account the presence of non-cooperative agents).

We assume that the problem is set on a complete filtered probability space that is denoted by $(\Omega, \mathcal{F} = \mathcal{F}^{\rmnc} \cup \mathcal{F}^{\rmc}, \mathbb{F} = (\mathcal{F}_t^{\rmnc} \cup \mathcal{F}_t^{\rmc} )_{t\in[0,T]}, \mathbb{P})$ supporting an $m$-dimensional Wiener processes $\bW^{i, \rmnc} = (W^{i, \rmnc}_t)_{t\in[0,T]}$ and $\bW^{i, \rmc} = (W^{i, \rmc}_t)_{t\in[0,T]}$. 
Agent $i$ in the non-cooperative population with $i \in [N^\rmnc]$ has a continuous $\RR^d$-valued state process $\bX^{i, \rmnc} = (X^{i, \rmnc}_t)_{t\in[0,T]}$ and $\RR^\ell$-valued control process $\balpha^{i, \rmnc} = (\alpha^{i, \rmnc}_t)_{t\in[0,T]}$. The control process is assumed to be square-integrable and $(\mathcal{F}_t)_{t\in[0,T]}$-progressively measurable i.e., $\balpha^{i, \rmnc} \in \bbA$. 
We denote by $\mu^{N,\rmnc}_t = \frac{1}{N^\rmnc} 
\sum_{j=1}^{N^\rmnc} \delta_{X^{j,\rmnc}_t}$ the empirical distribution of the non-cooperative population. 
Similarly, for the corresponding concepts in the cooperative population, we use the notations $X^{i,\rmc}_t$, $\alpha^{i,\rmc}_t$, with $i \in [N^\rmc]$, and $\mu^{N,\rmc}_t = \frac{1}{N^\rmc} \sum_{j=1}^{N^\rmc} \delta_{X^{j,\rmc}_t}$. The superscript $N$ simply refers to the fact that we have a finite population, although the number of agents in each population could be different.

We assume that the non-cooperative and cooperative agents' states at the initial time are i.i.d. with respective distributions $\mu^\rmnc_0$ and $\mu^\rmc_0$. %
Furthermore, we assume $\bX^{i,\rmnc} = (X^{i,\rmnc}_t)_{t \in [0,T]}$ evolves according to the dynamics:
\[
\begin{aligned}
    d X^{i,\rmnc}_t 
    &= b^{\rmnc}(t, X^{i,\rmnc}_t, \alpha_t^{i,\rmnc}, \mu^{N,\rmnc}_t, \mu_t^{N,\rmc})dt 
    + \sigma^{\rmnc} dW_t^{i,\rmnc},
\end{aligned}
\]
where $b^{\rmnc} : [0,T] \times \RR^d \times \RR^\ell \times \cP_2(\RR^d) \times \cP_2(\RR^d) \rightarrow \RR^d$ is the drift of the non-cooperative agent, and $\sigma^{\rmnc} \in \RR^{d \times m}$ is a constant volatility.

On the other hand, we assume $\bX^{i,\rmc} = (X^{i,\rmc}_t)_{t \in [0,T]}$ evolves according to the dynamics:
\begin{equation*}
    d X^{i,\rmc}_t = b^{\rmc}(t, X^{i,\rmc}_t, \alpha_t^{i,\rmc}, \mu^{N,\rmnc}_t, \mu_t^{i,\rmc})dt + \sigma^{\rmc} dW_t^{i,\rmc},
\end{equation*}
where $\mu_t^{i,\rmc}=\cL(X_t^{i,\rmc})$. We want to emphasize that the non-cooperative agents observe the mean fields of both non-cooperative and cooperative populations without seeing their own effect on the population directly. However, cooperative agents observe the mean field of the non-cooperative agents and their own effect on the population. 

In the dynamics, we assume that the Brownian motions $\bW^{1,\rmnc}, \dots, $ $ \bW^{N^\rmnc,\rmnc}$, $\bW^{1,\rmc}, \dots, \bW^{N^\rmc,\rmc}$ are a family of independent $m$-dimensional Wiener processes. They represent the idiosyncratic noises in the model for non-cooperative and cooperative agents, respectively. Here, similar to Section~\ref{sec:mixed-individual-finite-pop}, the model could be extended to more general settings, but we leave such extensions for future work.

Notice that the state of one agent depends on the controls used by all the agents (in both groups).   
To stress the dependence on the controls, we will sometimes write $X^{i, \bualpha^{\rmnc}, \bualpha^{\rmc}}$. Likewise, we will sometimes write $\mu^{N,\rmnc}_t = \mu^{N,\rmnc,\bualpha^{\rmnc}, \bualpha^{\rmc}}_t$, $\mu^{N,\rmc}_t = \mu^{N,\rmc,\bualpha^{\rmnc}, \bualpha^{\rmc}}_t$, and $\mu_t^{i, C} = \mu^{i,\rmc,\bualpha^{\rmnc}, \bualpha^{\rmc}}_t$ for the empirical distributions of non-cooperative and cooperative populations, and the state distribution of the cooperative agent $i$ with $i\in[N^{\rmnc}]$, respectively.

We first consider the objective of the non-cooperative agents. 
Given the controls used by other agents, $\bualpha^{-i,\rmnc}$ and $\bualpha^{\rmc}$, non-cooperative agent $i$'s goal is to minimize over $\balpha^{i,\rmnc}$ the cost:
\[
\begin{aligned}
    &J^{\mixpop,N,\rmnc}(\balpha^{i,\rmnc};\bualpha^{-i,\rmnc}, \bualpha^{\rmc}) 
    = \EE \Big[\int_0^T f^{\rmnc}(t, X_t^{i,\rmnc,\bualpha^{\rmnc}, \bualpha^{\rmc}}, \alpha^{i,\rmnc}_t, 
    \mu^{N,\rmnc,
    \bualpha^{\rmnc}, \bualpha^{\rmc}}_t, \mu_t^{N,\rmc, \bualpha^{\rmnc}, \bualpha^{\rmc}})dt 
    \\
    &\qquad \hskip5cm+ g^{\rmnc}(X_T^{i,\rmnc,\rmnc,\bualpha^{\rmnc}, \bualpha^{\rmc}}, 
    \mu^{N,\rmnc,\bualpha^{\rmnc}, \bualpha^{\rmc}}_T, \mu_T^{N,\rmc, \bualpha^{\rmnc}, \bualpha^{\rmc}})\Big].
\end{aligned}
\]
 Here the function $f^{\rmnc} : [0,T] \times \RR^d \times \RR^\ell \times \cP_2(\RR^d) \times \cP_2(\RR^d) \rightarrow \RR$ represents the running cost and $g^{\rmnc} : \RR^d \times \cP_2(\RR^d) \times \cP_2(\RR^d) \rightarrow \RR$ represents the terminal cost. They depend on the agent's own state and action (for the running cost), as well as both population distributions.

We then define the cost for the cooperative population. Given the control profile $\bualpha^{\rmnc}$ of the non-cooperative agents, the cooperative agents try to minimize the social cost for their group by choosing the control profile $\bualpha^{\rmc}$. Therefore, we do not define the individual cost and instead focus on the cost averaged over the cooperative population.  We define:
\[
\begin{aligned}
    &J^{\mixpop,N,\rmc}(\bualpha^{\rmc};\bualpha^{\rmnc}) 
    = \frac{1}{N^\rmc} \sum_{i=1}^{N^\rmc} 
    \EE \Big[\int_0^T f^{\rmc}(t, X_t^{i,\rmc, \bualpha^{\rmnc}, \bualpha^{\rmc}}, \alpha^{i,\rmc}_t, 
     \mu^{N,\rmnc, \bualpha^{\rmnc}, \bualpha^{\rmc}}_t, \mu_t^{i,\rmc, \bualpha^{\rmnc}, \bualpha^{\rmc}})dt 
  \\
&\qquad\hskip5cm
     + g^{\rmc}(X_T^{i,\rmc,\bualpha^{\rmnc}, \bualpha^{\rmc}}, \mu^{N,\rmnc,\bualpha^{\rmnc}, \bualpha^{\rmc}}_T, \mu_T^{i,\rmc, \bualpha^{\rmnc}, \bualpha^{\rmc}})\Big],
\end{aligned}
\]
where $\mu_t^{i,\rmc, \bualpha^{\rmnc}, \bualpha^{\rmc}} = \cL(X_t^{i,\rmc, \bualpha^{\rmnc}, \bualpha^{\rmc}})$, which shows the fact that each cooperative agent $i$ is aware of their own effect in the population.

With these notations, the notion of equilibrium is defined as follows. 
\begin{definition}
\label{def:mixed_population_finitepop_nash}
For $\epsilon^\rmnc \ge 0$ and $\epsilon^\rmc \ge 0$, an {\bf $(\epsilon^\rmnc, \epsilon^\rmc)$-equilibrium} for the mixed population game is a pair $(\hat\bualpha^\rmnc, \hat\bualpha^\rmc)$ such that:
\begin{itemize}
    \item[i.] for every $i \in [N^\rmnc]$ and every control $\balpha^{i,\rmnc}$, 
    \[
    \begin{aligned}
        &J^{\mixpop,N,\rmnc}(\hat\balpha^{i,\rmnc};\hat\bualpha^{-i,\rmnc}, \hat\bualpha^{\rmc})
        \le 
        J^{\mixpop,N,\rmnc}(\balpha^{i,\rmnc};\hat\bualpha^{-i,\rmnc}, \hat\bualpha^{\rmc}) + \epsilon^\rmnc;
    \end{aligned}
    \]
    \item[ii.] for every $\bualpha^{\rmc} = (\bualpha^{1, \rmc}, \dots, \bualpha^{N^\rmc, \rmc})$, 
    \[
        J^{\mixpop,N,\rmc}(\hat\bualpha^{\rmc}; \hat\bualpha^{\rmnc})
        \le 
        J^{\mixpop,N,\rmc}(\bualpha^{\rmc}; \hat\bualpha^{\rmnc}) + \epsilon^\rmc.
    \]
\end{itemize}
An {\bf equilibrium} is an $(\epsilon^\rmnc,\epsilon^\rmc)$-Nash equilibrium with $(\epsilon^\rmnc,\epsilon^\rmc)=(0,0)$. 
\end{definition}

\subsubsection{Mean field model}
\label{sec:mixed-population-mf}

Now, we let informally $N^{\rmnc} \rightarrow \infty$ and $N^{\rmc} \rightarrow \infty$. Since all the cooperative and non-cooperative agents are identical within their groups, we can focus on representative non-cooperative and representative cooperative agents. We assume that the problem is set on a complete filtered probability space that is denoted by $(\Omega, \mathcal{F} = \mathcal{F}^{\rmnc} \cup \mathcal{F}^{\rmc}, \mathbb{F} = (\mathcal{F}_t^{\rmnc} \cup \mathcal{F}_t^{\rmc} )_{t\in[0,T]}, \mathbb{P})$ supporting an $m$-dimensional Wiener processes $\bW^{i, \rmnc} = (W^{\rmnc}_t)_{t\in[0,T]}$ and $\bW^{i, \rmc} = (W^{\rmc}_t)_{t\in[0,T]}$.

\noindent\textbf{Non-cooperative agents.} The aim of the representative non-cooperative agent is to minimize the following cost over her control process $\balpha^{\rmnc}\in \bbA$:
\begin{equation}
    \begin{aligned}
    &J^{\rmnc}(\balpha^{\rmnc};\bmu^{\rmnc},\bmu^{\rmc})
    := \EE \Big[\int_0^T f^{\rmnc}(t, X_t^{\balpha^{\rmnc}}, \alpha^{\rmnc}_t, \mu^{\rmnc}_t, \mu_t^{\rmc})dt 
    + g^{\rmnc}(X_T^{\balpha^{\rmnc}}, \mu^{\rmnc}_T, \mu_T^{\rmc})\Big].
    \end{aligned}
    \label{eq:mixedpop-mfg-cost-NC}
\end{equation}
Here, as introduced in Section~\ref{sec:mixed-individual-finite-pop}, the function $f^{\rmnc} : [0,T] \times \RR^d \times \RR^\ell \times \cP_2(\RR^d) \times \cP_2(\RR^d) \rightarrow \RR$ represents the running cost of the representative non-cooperative agent that depends on the non-cooperative agent's own state (induced by her own control) $X^{\balpha^{\rmnc}}_t$, own control $\alpha^{\rmnc}_t$, the mean field distribution of the non-cooperative agents $\mu^{\rmnc}_t$, and the mean field distribution of cooperative agents $\mu_t^{\rmc}$. The function $g^{\rmnc} : \RR^d \times \cP_2(\RR^d) \times \cP_2(\RR^d) \rightarrow \RR$ represents the terminal cost of the non-cooperative agent and depends on the same inputs at the terminal time except the control of the non-cooperative agent.

The state of the representative non-cooperative agent, $\bX^{\balpha^{\rmnc}}$, evolves according to the following dynamics:
\begin{equation*}
    d X^{\balpha^{\rmnc}}_t = b^{\rmnc}(t, X^{\balpha^{\rmnc}}_t, \alpha^{\rmnc}_t, \mu^{\rmnc}_t, \mu_t^{\rmc})dt + \sigma^\rmnc dW^{\rmnc}_t,
\end{equation*}
where the function $b^{\rmnc} : [0,T] \times \RR^d \times \RR^\ell \times \cP_2(\RR^d) \times \cP_2(\RR^d) \rightarrow \RR^d$ represents the drift of the state of the representative non-cooperative agent and $\sigma^\rmnc \in \RR^{d \times m}$ is the constant volatility. The $m$-dimensional Wiener process $\bW^{\rmnc}$ represents the idiosyncratic noise of the non-cooperative agents. 
\vskip6pt
\noindent\textbf{Cooperative agents.} The aim of the representative cooperative agent is to minimize the following cost over her control process $\balpha^{\rmc}\in \bbA$:
\[
\begin{aligned}
    J^{\rmc}(\balpha^{\rmc};\bmu^{\rmnc})
    &:= \EE \Big[\int_0^T f^{\rmc}(t, X_t^{\balpha^{\rmc}}, \alpha^{\rmc}_t, \mu^{\rmnc}_t, \mu_t^{\balpha^{\rmc}})dt 
    + g^{\rmc}(X_T^{\balpha^{\rmnc}}, \mu^{\rmnc}_T, \mu_T^{\balpha^{\rmc}})\Big],
\end{aligned}
\]
where $\mu_t^{\balpha^{\rmc}}= \cL(X_t^{\balpha^{\rmc}})$ for all $t \in [0,T]$. As introduced in Section~\ref{sec:mixed-individual-finite-pop}, the function $f^{\rmc} : [0,T] \times \RR^d \times \RR^\ell \times \cP_2(\RR^d) \times \cP_2(\RR^d) \rightarrow \RR$ represents the running cost of the representative cooperative agent and it depends on the cooperative agent's own state $X^{\balpha^{\rmc}}_t$, own control $\alpha^{\rmc}_t$, the mean field distribution of the non-cooperative agents $\mu^{\rmnc}_t$, and the distribution of the cooperative agent's own state $\mu_t^{\balpha^{\rmc}}$. The function $g^{\rmc} : \RR^d \times \cP_2(\RR^d) \times \cP_2(\RR^d) \rightarrow \RR$ represents the terminal cost of the cooperative agent and depends on the same inputs at the terminal time except the control of the cooperative agent.

The state of the representative cooperative agent, $\bX^{\balpha^{\rmc}}$, evolves according to the following dynamics:
\begin{equation*}
    d X^{\balpha^{\rmc}}_t = b^{\rmc}(t, X^{\balpha^{\rmc}}_t, \alpha^{\rmc}_t, \mu^{\rmnc}_t, \mu_t^{\balpha^{\rmc}})dt + \sigma^\rmc dW^{\rmc}_t,
\end{equation*}
where the function $b^{\rmc} : [0,T] \times \RR^d \times \RR^\ell \times \cP_2(\RR^d) \times \cP_2(\RR^d) \rightarrow \RR^d$ represents the drift of the state of the representative cooperative agent and $\sigma^\rmc \in \RR^{d \times m}$ is the constant volatility. The $m$-dimensional Wiener process $\bW^{\rmc}$ represents the idiosyncratic noise of the cooperative agents. For the sake of simplicity, we assume Wiener processes $\bW^{\rmnc}$ and $\bW^{\rmc}$ are independent. 

\vskip6pt
\begin{definition}[MP-MFE]\label{def:mixedpop_mfg_nash}
We call $(\hat{\balpha}^{\rmnc}, \hat{\balpha}^{\rmc}, \hat{\bmu}^{\rmnc})$ a {\bf mixed population mean field equilibrium} (MP-MFE) if
    \begin{itemize}
        \item[i.] $\hat{\balpha}^{\rmnc}$ is the best response of the representative non-cooperative agent given the mean field distributions of the non-cooperative and cooperative agents, $\hat{\bmu}^{\rmnc}$ and ${\bmu}^{\hat{\balpha}^{\rmc}}$, respectively. In other words, $\hat{\balpha}^{\rmnc} \in \argmin_{\balpha^{\rmnc}\in \bbA} J^{\rmnc}(\balpha^{\rmnc};\hat{\bmu}^{\rmnc},{\bmu}^{\hat{\balpha}^{\rmc}})$,\vskip2mm
        \item[ii.] For all $t\in[0,T]$, we have $\hat{\mu}_t^{\rmnc} = \mu^{\hat{\balpha}^{\rmnc}}_t$,\vskip2mm
        \item[iii.] $\hat{\balpha}^{\rmc}$ is the optimal control of the representative cooperative agent given the mean field distribution of the non-cooperative agents, $\hat\bmu^{\rmnc}$. In other words, $\hat{\balpha}^{\rmc} \in \argmin_{\balpha^{\rmc}\in \bbA} J^{\rmc}(\balpha^{\rmc};\hat{\bmu}^{\rmnc})$.
    \end{itemize}
\end{definition}

    An interesting version of the mixed population mean field model is the case where we analyze a population that consists of a $p$ proportion of non-cooperative agents and a $(1-p)$ proportion of cooperative agents. In this case, we can assume that the agents are interacting with a mixture of population distribution in the following way: 
    \begin{equation*}
        \begin{aligned}
            &f^{\rmnc}(t, x, a, \mu, \mu') 
            := \cf^{\rmnc}(t, x, a, p \mu + (1-p)\mu')
            \\
            &f^{\rmc}(t, x, a, \mu, \mu')
            := \cf^{\rmc}(t, x, a, p\mu+(1-p) \mu'),  
        \end{aligned}
    \end{equation*}
    where $\mu$ and $\mu'$ play respectively the role of $\mu_t^{\rmnc}$ and $\mu_t^{\rmc}$.  $g^{\rmnc}, g^{\rmc}, b^{\rmnc}, b^{\rmc}$ can be defined similarly by using functions $\cg^{\rmnc}(x, p \mu + (1-p)\mu'$, $\cg^{\rmc}(x, p\mu+(1-p) \mu')$, $\cb^{\rmnc}(t, x, z, p \mu + (1-p)\mu')$, and $\cb^{\rmc}(t, x, \alpha^{\rmc}_t, p\mu+(1-p) \mu')$, respectively. An analysis of a linear-quadratic model in this form can be found in~\citep{dayanikli2024can}.
\vskip6pt
    \begin{remark}
    The version with a population $p$ proportion of non-cooperative and $(1-p)$ proportion of cooperative agents is similar to the $p$-partial MFG model introduced by~\cite{carmona2023nash}. However, there is an important difference. 
    In $p$-partial mean field game that is introduced by~\cite{carmona2023nash}, the cooperative agents do not optimize their own objective under the knowledge of the existence of non-cooperative agents in the population. Instead, they follow the social optimum that is found by assuming as if everyone in the population is cooperative. In a way, in the mixed population mean field model, the cooperative agents are informed about the population mixture and actively optimizing their behavior by taking into account this information.
    \end{remark}

\subsubsection{Approximate equilibrium result}

\begin{theorem}[$\epsilon$-equilibrium for MP-MFE]
\label{thm:MP-eps-Nash}
Under technical conditions stated in Appendix~\ref{app:MP-eps-Nash}, the following holds. Let $(\hat{\balpha}^{\rmnc}, \hat{\balpha}^{\rmc}, \hat{\bmu}^{\rmnc})$ be a mixed population mean field equilibrium as in Definition~\ref{def:mixedpop_mfg_nash}. 
For every positive $\epsilon^\rmnc$ and $\epsilon^\rmc$, for large enough $N^\rmnc$ and $N^\rmc$,  $(\hat{\balpha}^{\rmnc}, \hat{\balpha}^{\rmc}, \hat{\bmu}^{\rmnc})$ is an $(\epsilon^\rmnc, \epsilon^\rmc)$-equilibrium for the $(N^\rmnc,N^\rmc)$-agent mixed population game as in Definition~\ref{def:mixed_population_finitepop_nash}.
\end{theorem}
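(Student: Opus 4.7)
The plan is to establish the two inequalities of Definition~\ref{def:mixed_population_finitepop_nash} separately, using propagation of chaos for the empirical distributions together with the Wasserstein-Lipschitz continuity of the cost coefficients (which will be among the technical assumptions in Appendix~\ref{app:MP-eps-Nash}). The key preliminary observation is that when every agent applies $(\hat\bualpha^\rmnc, \hat\bualpha^\rmc)$, the processes $(X^{i,\rmnc})_{i \in [N^\rmnc]}$ and $(X^{i,\rmc})_{i \in [N^\rmc]}$ form two i.i.d.\ McKean--Vlasov systems driven by the mean field flows $\hat\bmu^\rmnc$ and $\bmu^{\hat\balpha^\rmc}$, so by classical propagation-of-chaos estimates (see e.g.\ \cite{CarmonaDelarue_book_I}) the empirical measures $\bmu^{N,\rmnc}$ and $\bmu^{N,\rmc}$ approximate $\hat\bmu^\rmnc$ and $\bmu^{\hat\balpha^\rmc}$ in Wasserstein distance with a rate vanishing as $N^\rmnc, N^\rmc \to \infty$.

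For the non-cooperative condition, I would follow the standard MFG argument already used in the proof of Theorem~\ref{thm:MI-eps-Nash}. Fix $i \in [N^\rmnc]$ and let agent $i$ deviate to $\balpha^{i,\rmnc}$ while the others keep $(\hat\bualpha^{-i,\rmnc}, \hat\bualpha^\rmc)$. The unilateral deviation perturbs $\bmu^{N,\rmnc}$ by at most $O(1/N^\rmnc)$ in Wasserstein distance (controlled by a uniform $L^2$ bound on the state processes), while $\bmu^{N,\rmc}$ remains close to $\bmu^{\hat\balpha^\rmc}$. Using Lipschitz continuity of $b^\rmnc,f^\rmnc,g^\rmnc$ in the measure arguments, I can compare $J^{\mixpop,N,\rmnc}(\balpha^{i,\rmnc};\hat\bualpha^{-i,\rmnc},\hat\bualpha^\rmc)$ with the mean field cost $J^\rmnc(\balpha^{i,\rmnc};\hat\bmu^\rmnc,\bmu^{\hat\balpha^\rmc})$ up to an error $\delta_N \to 0$. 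The MFE optimality $J^\rmnc(\hat\balpha^\rmnc;\hat\bmu^\rmnc,\bmu^{\hat\balpha^\rmc}) \le J^\rmnc(\balpha^{i,\rmnc};\hat\bmu^\rmnc,\bmu^{\hat\balpha^\rmc})$ from Definition~\ref{def:mixedpop_mfg_nash}(i) then yields the $\epsilon^\rmnc$-Nash bound once $N^\rmnc$ is large enough.

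For the cooperative condition, the argument is more delicate since the whole coalition of $N^\rmc$ cooperative agents may jointly deviate to an arbitrary, possibly asymmetric profile $\bualpha^\rmc$. I would split into two inequalities. The upper bound $J^{\mixpop,N,\rmc}(\hat\bualpha^\rmc;\hat\bualpha^\rmnc) \le J^\rmc(\hat\balpha^\rmc;\hat\bmu^\rmnc) + \delta_N$ follows from coupled propagation of chaos under $(\hat\bualpha^\rmnc,\hat\bualpha^\rmc)$, using that each $\cL(X^{i,\rmc}_t)$ converges to $\mu^{\hat\balpha^\rmc}_t$ and that $\bmu^{N,\rmnc}_t$ converges to $\hat\bmu^\rmnc_t$. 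The lower bound $J^{\mixpop,N,\rmc}(\bualpha^\rmc;\hat\bualpha^\rmnc) \ge J^\rmc(\hat\balpha^\rmc;\hat\bmu^\rmnc) - \delta_N$ is of MFC type: after symmetrizing over permutations of the cooperative labels (which does not increase the averaged cost), the empirical measure of the coalition produces, in the limit $N^\rmc \to \infty$, an admissible McKean--Vlasov flow with some representative control $\tilde\balpha$, and the corresponding limiting cost equals $J^\rmc(\tilde\balpha;\hat\bmu^\rmnc)$; by Definition~\ref{def:mixedpop_mfg_nash}(iii) this is at least $J^\rmc(\hat\balpha^\rmc;\hat\bmu^\rmnc)$.

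The main obstacle is the lower bound in the cooperative step, because it requires relating the averaged cost of an arbitrary asymmetric $N^\rmc$-tuple of controls to a single McKean--Vlasov problem. This is the classical MFC approximation difficulty, and I expect the technical assumptions in Appendix~\ref{app:MP-eps-Nash} to enforce either convexity of the Hamiltonian (so symmetrization decreases cost) or a restriction to closed-loop feedback controls that trivially symmetrize. A secondary subtlety is the feedback of a cooperative deviation into the non-cooperative dynamics through $\bmu^{N,\rmc}$: because $b^\rmnc$ depends on $\mu^{N,\rmc}$, one cannot invoke propagation of chaos for $\bmu^{N,\rmnc}$ independently and a coupled estimate controlling both empirical measures simultaneously will be needed.
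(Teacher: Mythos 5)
Your overall strategy matches the paper's: the non-cooperative inequality is handled exactly as in the proof of Theorem~\ref{thm:MI-eps-Nash} (bound on the optimal and deviating controls, truncation of the state, comparison of the finite-population cost with the mean field cost via a Wasserstein-Lipschitz cost-perturbation lemma, then the MFE optimality), and your upper bound for the cooperative coalition via propagation of chaos is also what the paper does. Two points of divergence are worth flagging. First, both of the technical difficulties you anticipate at the end are simply assumed away in Appendix~\ref{app:MP-eps-Nash}: the drifts are taken to be $b^{\rmnc}(t,x,a,\mu,\mu')=b^{\rmc}(t,x,a,\mu,\mu')=a$, so each state depends only on that agent's own control, no deviation feeds back into anyone else's dynamics, and no coupled estimate on the two empirical measures is needed — only the costs see the measures, and they are bounded and Lipschitz in $W_1$. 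Second, for the cooperative lower bound the paper does \emph{not} carry out the symmetrization-plus-limiting-MKV-flow argument you propose for arbitrary asymmetric profiles $\bualpha^{\rmc}$; its Lemma on the cooperative cost difference writes $\frac{1}{N^{\rmc}}\sum_i \EE|\alpha^{i,\rmc}_t|^2 = \EE|\alpha^{\rmc}_t|^2$ and invokes the law of large numbers for $\mu^{N,\rmc,\bualpha^{\rmc}}_t$, which implicitly restricts the coalition's deviations to i.i.d.\ copies of a single representative control. Your route is therefore more ambitious (it would cover genuinely asymmetric coalition deviations, at the price of justifying that symmetrization does not increase the averaged cost and that the empirical flow converges to an admissible McKean--Vlasov flow), while the paper's route is shorter but proves the $\epsilon^{\rmc}$-optimality only against this symmetric class of deviations. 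Apart from this scope difference, your proposal is a faithful blueprint of the actual argument.
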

The proof is postponed to Appendix~\ref{app:MP-eps-Nash}. 

Similar to the discussion in Section~\ref{subsubsec:app_eq_mi}, when the number of agents increases, the finite agent problem will become increasingly harder to solve. Informally, this approximation result motivates us to focus on solving the mixed population mean field problem instead of its finite agent counterpart.

\subsubsection{Equilibrium characterization result}

For $\pi \in \{\rmc,\rmnc\}$, we introduce the Hamiltonian:
\[
\begin{aligned}
    &H^{\mixpop, \pi}(t, x, \alpha, \mu, \mu', y, z) 
    = b^\pi(t, x, \alpha, \mu, \mu') \cdot y
    + f^\pi(t, x, \alpha, \mu, \mu' ).
\end{aligned}
\]

\begin{theorem}[FBSDE for MP-MFE]
\label{thm:MP-Pontryagin}
Under technical conditions stated in Appendix~\ref{app:MP-fbsde}, the following holds.
    $(\hat{\balpha}^{\rmnc}, \hat{\bmu}^{\rmnc}, \hat{\balpha}^{\rmc})$ is a mixed population mean field equilibrium as in Definition~\ref{def:mixedpop_mfg_nash}. 
    if and only if it satisfies:
    \begin{equation*}
    \begin{cases}
        H^{\mixpop, \rmnc}(t, X^\rmnc_t, \hat{\alpha}^\rmnc_t , \mu^\rmnc_t, \mu^\rmc_t, Y^\rmnc_t, Z^\rmnc_t) 
        = \inf_\alpha H^{\mixpop, \rmnc}(t, X^\rmnc_t, \alpha, \mu^\rmnc_t, \mu^\rmc_t, Y^\rmnc_t, Z^\rmnc_t),
        \\
        H^{\mixpop, \rmc}(t, X^\rmc_t, \hat{\alpha}^\rmc_t , \mu^\rmnc_t, \mu^\rmc_t, Y^\rmc_t, Z^\rmc_t)
        = \inf_\alpha H^{\mixpop, \rmc}(t, X^\rmc_t, \alpha, \mu^\rmnc_t, \mu^\rmc_t, Y^\rmc_t, Z^\rmc_t), 
        \\
        \hat\mu^\rmnc_t 
        = \mu^\rmnc_t,
    \end{cases}
    \end{equation*}
    where $\mu^\rmnc_t = \cL(X^\rmnc_t)$, $\mu^\rmc_t = \cL(X^\rmc_t)$, and $(\bX^\rmnc, \bY^\rmnc, \bZ^\rmnc, \bX^\rmc, \bY^\rmc, \bZ^\rmc)$ solve the coupled MKV FBSDE:
\begin{equation}
\label{eq:FBSDE-MP}
    \begin{cases}
        dX^\rmnc_t = b^\rmnc(t, X^\rmnc_t, \hat{\alpha}^\rmnc_t, \mu^\rmnc_t, \mu^\rmc_t) dt + \sigma^\rmnc dW^\rmnc_t
        \\[2mm]
        dY^\rmnc_t = - \partial_x H^{\mixpop,\rmnc}(t, X^\rmnc_t, \hat{\alpha}^\rmnc_t, \mu^\rmnc_t, \mu^\rmc_t, 
        Y^\rmnc_t, Z^\rmnc_t) dt + Z^\rmnc_t dW^\rmnc_t
        \\[2mm]
        X^\rmnc_0 \sim \mu^\rmnc_0, \qquad Y^\rmnc_T = g^{\rmc}(X^\rmnc_T, \mu^\rmnc_T, \mu^\rmc_T),
        \\[2mm]
        dX^\rmc_t = b^\rmc(t, X^\rmc_t, \hat{\alpha}^\rmc_t, \mu^\rmnc_t, \mu^\rmc_t) dt + \sigma^\rmc dW^\rmc_t
        \\[2mm]
        dY^\rmc_t = \Big(- \partial_x H^{\mixpop,\rmc}(t, X^\rmc_t, \hat{\alpha}^\rmc_t, \mu^\rmnc_t, \mu^\rmc_t, Y^\rmc_t, Z^\rmc_t) 
        - \tilde\EE\Big[\partial_{\mu^\rmc} H^{\mixpop,\rmc}(t, \tilde{X}^\rmc_t, \tilde{\hat{\alpha}}^\rmc_t, \mu^\rmnc_t, \mu^\rmc_t,
         \tilde{Y}^\rmc_t, \tilde{Z}^\rmc_t)(X^\rmc_t) \Big] \Big) dt 
        + Z^\rmc_t dW^\rmc_t
        \\[2mm]
        X^\rmc_0 \sim \mu^\rmc_0, \qquad Y^\rmc_T = g^{\rmc}(X^\rmc_T, \mu^\rmnc_T, \mu^\rmc_T),
    \end{cases}
\end{equation}
where we recall that $\tilde\EE$ denotes an expectation on the tilded variables only.  
\end{theorem}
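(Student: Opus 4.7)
The plan is to decouple the three equilibrium conditions of Definition~\ref{def:mixedpop_mfg_nash} into two independent optimization subproblems -- one standard stochastic control problem for the non-cooperative representative agent, and one McKean--Vlasov control problem for the cooperative representative agent -- then glue the resulting Pontryagin FBSDEs through the fixed-point condition $\hat\mu^\rmnc_t = \cL(X^\rmnc_t)$.

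First, I would treat the non-cooperative subproblem. With the flows $\hat\bmu^\rmnc$ and $\bmu^{\hat\balpha^\rmc}$ frozen, condition (i) of Definition~\ref{def:mixedpop_mfg_nash} states that $\hat\balpha^\rmnc$ minimizes $J^\rmnc(\cdot;\hat\bmu^\rmnc, \bmu^{\hat\balpha^\rmc})$. This is a \emph{classical} (non-MKV) stochastic control problem because the representative NC agent does not internalize any impact on either flow. Applying the standard stochastic maximum principle (see e.g.~\cite[Chapter 3]{CarmonaDelarue_book_I}) with Hamiltonian $H^{\mixpop,\rmnc}$ yields, under the convexity and regularity hypotheses of Appendix~\ref{app:MP-fbsde}, both the pointwise Hamiltonian minimization condition and the adjoint BSDE for $(Y^\rmnc, Z^\rmnc)$ driven by $-\partial_x H^{\mixpop,\rmnc}$. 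No Lions-derivative correction arises on this side, exactly because an individual NC deviation has negligible influence on the flows in the mean field limit.

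Second, the cooperative subproblem is a genuine McKean--Vlasov control problem: $b^\rmc, f^\rmc, g^\rmc$ all depend on $\mu^{\balpha^\rmc}_t = \cL(X^{\balpha^\rmc}_t)$, while $\hat\bmu^\rmnc$ plays only the role of an exogenous parameter. I would invoke the Pontryagin principle for McKean--Vlasov control (see e.g.~\cite[Chapter 6]{CarmonaDelarue_book_I}), which produces exactly the extra driver term $-\tilde\EE[\partial_{\mu^\rmc} H^{\mixpop,\rmc}(t, \tilde X^\rmc_t, \tilde{\hat\alpha}^\rmc_t, \mu^\rmnc_t, \mu^\rmc_t, \tilde Y^\rmc_t, \tilde Z^\rmc_t)(X^\rmc_t)]$ that appears in~\eqref{eq:FBSDE-MP}; this correction encodes the fact that a cooperative deviation shifts the entire cooperative distribution and therefore feeds back through $\mu^\rmc$. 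Sufficiency of the Pontryagin condition follows from joint convexity of $H^{\mixpop,\rmc}$ in $(x, \alpha, \mu^\rmc)$, which is part of the hypotheses in Appendix~\ref{app:MP-fbsde}.

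Finally, I would close the system by imposing the consistency condition (ii), which identifies $\hat\bmu^\rmnc_t$ with $\cL(X^\rmnc_t)$; together with the identification $\mu^\rmc_t = \cL(X^\rmc_t)$ already built into the cooperative problem, this glues the two otherwise decoupled FBSDEs into the single coupled system~\eqref{eq:FBSDE-MP}. The converse direction is obtained by constructing $\hat\balpha^\rmnc, \hat\balpha^\rmc$ as the pointwise Hamiltonian minimizers coming from a solution of~\eqref{eq:FBSDE-MP} and reading off the three conditions of Definition~\ref{def:mixedpop_mfg_nash}. The main technical obstacle lies on the cooperative side: I must verify joint Lions-differentiability of $b^\rmc, f^\rmc, g^\rmc$ in their cooperative measure argument and carefully track that only that argument contributes a $\partial_{\mu^\rmc}$ correction, while the NC measure $\mu^\rmnc$ contributes none because the cooperative agent does not control it. The NC side is classical and the final coupling is then formal.
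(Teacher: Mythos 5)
Your proposal is correct and follows essentially the same route as the paper: the paper likewise treats the non-cooperative side as a standard MFG/stochastic-control problem with frozen flows (invoking \cite[Proposition 3.23]{CarmonaDelarue_book_I}) and the cooperative side as a McKean--Vlasov control problem (invoking \cite[Proposition 6.15 and Theorem 6.16]{CarmonaDelarue_book_I}), which is where the $\tilde\EE[\partial_{\mu^\rmc}H^{\mixpop,\rmc}(\cdot)]$ term originates, and then couples the two FBSDEs through the consistency condition $\hat\mu^\rmnc_t=\cL(X^\rmnc_t)$. The only cosmetic difference is that you unpack the NC step into ``classical SMP plus fixed point'' where the paper cites the packaged MFG FBSDE characterization directly.
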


The proof is provided in Appendix~\ref{app:MP-fbsde}.

Informally, the above result says that in order to find the MI-MFNE, we can focus on solving the FBSDE system given in~\eqref{eq:FBSDE-MP}. Different than the mixed individual mean field model, now we have an FBSDE system for non-cooperative and cooperative agents, and due to their interactions through the population distributions, these systems are coupled.

\section{Mixed mean field models with common pool resources}
\label{sec:CPR}

The tragedy of the commons may occur when a common-pool resource is over-used and becomes extinct. This occurs when individual and population interests have a conflict which creates an individual profit through over-consumption in the short term, but in the long term creates negative outcomes when the common-pool resource becomes extinct or depleted below the management concern levels. However, as it is discussed in Section~\ref{sec:intro}, individual efforts and altruistic behaviors may contribute to the sustainability of CPRs.
Motivated by modeling CPRs and the effect of altruism in their sustainability, we introduce mixed individual and mixed population mean field models with CPRs.

\subsection{Mixed individual mean field model with common pool resources} 
\label{subsec:mi_CPR}

We assume that there are $k\in\mathbb{N}_+$ different types of common-pool resources. The state of the CPRs (i.e., the remaining amount of the stock) is a continuous $\RR^k_+$-valued process and is denoted by $\bK = (K_t)_{t\in[0,T]}$. It evolves according to the following dynamics:
\begin{equation}
    d K_t  = b^K(t, K_t, \mu_t, \mu_t^{\balpha})dt, 
\end{equation}
where $K_0=k_0\in\RR^k_+$. The drift is assumed to be of the form $b^K(t, k, \mu,\mu') := b^K_0(t, k) - b^K_1(t, k, \mu, \mu')$, where the function $b_0^K : [0,T] \times \RR_+^k \rightarrow \RR^k$ represents the reproduction or replenishment rate of CPRs and the function $b_1^K : [0,T] \times \RR_+^k \times \cP(\RR^d) \times \cP(\RR^d) \rightarrow \RR^k$ represents the CPR consumption rate.

With the introduction of CPRs, the goal of the representative agent in the \textit{mixed individual mean field model} is: given $\bmu$, find 
\begin{equation*}
\begin{aligned}
    &\inf_{\balpha \in \bbA} J^K(\balpha;\bmu)
    := \inf_{\balpha \in \bbA} \EE \Big[\int_0^T f^K(t, X_t^{\balpha}, \alpha_t, \mu_t, \mu_t^{\balpha}, K_t)dt 
    + g^K(X_T^{\balpha}, \mu_T, \mu_T^{\balpha}, K_T)\Big],
\end{aligned}
\end{equation*}
where $\mu_t^{\balpha} = \cL(X_t^{\balpha})$ for all $t\in[0,T]$ and the dynamics of individual state $\bX^{\balpha}$ and CPR state $\bK$ are:
\begin{equation*}
    \begin{aligned}
        d X^{\balpha}_t &= b(t, X^{\balpha}_t, \alpha_t, \mu_t, \mu_t^{\balpha}, K_t)dt + \sigma dW_t,\\
        d K_t &= \big(b^K_0(t, K_t) - b^K_1(t, K_t, \mu_t, \mu_t^{\balpha})\big)dt.
    \end{aligned}
\end{equation*}

We can then introduce the extended state $\tilde{X}^{\balpha}_t = (X^{\balpha}_t, K_t)$, the mean fields $\tilde{\mu}_t = \mu_t \otimes \delta_{K_t}$ and $\tilde{\mu}^{\balpha}_t = \mu^{\balpha}_t \otimes \delta_{K_t}$, and the functions 
\begin{equation*}
\begin{aligned}
    \tilde{f}(t, (x,k), \alpha, \tilde\mu, \tilde\mu') &= f^K(t, x, \alpha, \mu, \mu', k),\\
    \tilde{g}((x,k), \tilde\mu, \tilde\mu') &= g^K(x, \mu, \mu', k)\\
        \tilde{b}(t,(x,k), \alpha, \tilde\mu, \tilde\mu') &= \begin{bmatrix}
        b(t, x, \alpha, \mu, \mu', k)\\ b^K(t,k,\mu,\mu')
    \end{bmatrix},\\
    \tilde{\sigma} = [\sigma, 0]^\top, \qquad \tilde{W}_t &= W_t, 
\end{aligned}
\end{equation*}
where $\mu, \mu'$ are the first marginals of $\tilde\mu, \tilde\mu'$, 
such that the problem rewrites: 
\begin{equation*}
\begin{aligned}
    &\inf_{\balpha \in \bbA} \tilde{J}(\balpha;\tilde\bmu )
    := \inf_{\balpha \in \bbA} \EE \Big[\int_0^T \tilde{f}(t, \tilde{X}_t^{\balpha}, \alpha_t, \tilde{\mu}_t, \tilde{\mu}_t^{\balpha})dt 
    + \tilde{g}(\tilde{X}_T^{\balpha}, \tilde{\mu}_T, \tilde{\mu}_T^{\balpha})\Big],
\end{aligned}
\end{equation*}
where the dynamics of $\tilde\bX^{\balpha}$ are given as follows:
\begin{equation*}
    \begin{aligned}
        d \tilde{X}^{\balpha}_t &= \tilde{b}(t, \tilde{X}^{\balpha}_t, \alpha_t, \tilde{\mu}_t, \tilde{\mu}_t^{\balpha})dt + \tilde{\sigma} d\tilde{W}_t.
    \end{aligned}
\end{equation*}
This problem is now a mixed individual mean field model as presented earlier in Section~\ref{sec:mixed-individual-mf}, and the same theoretical results can be extended to this setting.

\subsection{Mixed population mean field model with common pool resources} 

In the \textit{mixed population mean field model}, the dynamics of the common-pool resource state will be perceived differently by the non-cooperative and cooperative agents to emphasize that non-cooperative agents are not aware of their individual effect on the system while the cooperative agents are. The CPR dynamics perceived by the representative non-cooperative and cooperative agents will be denoted with $\bK^{\rmnc}$ and $\bK^{\rmc}$, respectively and evolve according to the following dynamics:
\begin{equation}
\begin{aligned}
    d K^{\rmnc}_t  &= b^{K}(t, K_t, \mu^{\rmnc}_t, \mu^{\rmc}_t)dt,\\
    d K^{\rmc}_t  &= b^{K}(t, K_t, \mu^{\rmnc}_t, \mu^{\balpha^{\rmc}}_t)dt,
\end{aligned}
\end{equation}
where $K^{\rmnc}_0=k_0\in\RR^k_+$. In this model, as it can be seen from the dynamics above for the non-cooperative agents, the perceived common-pool resource dynamics include the mean field distribution of both non-cooperative and cooperative agent populations exogenously. On the other hand, the cooperative agents take the mean field of the non-cooperative agent population exogenously but they see their own affect on the common pool resources with the inclusion of $\bmu^{\balpha^{\rmc}}$. Similar to the discussion before, for motivational examples, the drift form can be assumed as $b^{K}(t, k, \mu, \mu') := b^K_0(t, k) - b^{K}_1(t, k, \mu, \mu')$. Then the function $b_1^{K} : [0,T] \times \RR_+^k \times \cP(\RR^d)\times \cP(\RR^d) \rightarrow \RR^k$ represents the CPR consumption rate.

With the introduction of CPRs, the goal of the representative \textbf{non-cooperative} agent in the \textit{mixed population mean field model} is: given $\bmu^{\rmnc}, \bmu^{\rmc}$ find 
\begin{equation*}
\begin{aligned}
    &\inf_{\balpha^\rmnc \in \bbA} J^{K,\rmnc}(\balpha^\rmnc;\bmu^{\rmnc},\bmu^{\rmc})\\
    &:= \inf_{\balpha^{\rmnc} \in \bbA} \EE \Big[\int_0^T f^{K,\rmnc}(t, X_t^{\balpha^{\rmnc}}, \alpha^{\rmnc}_t, 
    \mu^{\rmnc}_t, \mu_t^{\rmc}, K^{\rmnc}_t)dt 
    + g^{K,\rmnc}(X_T^{\balpha^{\rmnc}}, \mu^{\rmnc}_T, \mu_T^{\rmc}, K^{\rmnc}_T)\Big],
\end{aligned}
\end{equation*}
where the dynamics of individual state of representative non-cooperative agent $\bX^{\balpha^{\rmnc}}$ and CPR state $\bK^{\rmnc}$ are:
\begin{equation*}
    \begin{aligned}
        d X^{\balpha^{\rmnc}}_t 
        &= b^{\rmnc}(t, X^{\balpha^{\rmnc}}_t, \alpha^{\rmnc}_t, \mu^{\rmnc}_t, \mu_t^{\rmc}, K^{\rmnc}_t)dt 
        + \sigma^{\rmnc} dW^{\rmnc}_t,
        \\
        d K^{\rmnc}_t 
        &= \big(b^K_0(t, K^{\rmnc}_t) - b^{K}_1(t, K^{\rmnc}_t, \mu^{\rmnc}_t, \mu^{\rmc}_t)\big)dt.
    \end{aligned}
\end{equation*}
On the other hand, with the introduction of CPRs, the goal of the representative \textbf{cooperative} agent in the \textit{mixed population mean field model} is: given $\bmu^{\rmnc}$
\begin{equation}
\begin{aligned}
    &\inf_{\balpha^{\rmc} \in \bbA} J^{K,\rmc}(\balpha^{\rmc};\bmu^{\rmnc})
    \\
    &:= \inf_{\balpha^{\rmc} \in \bbA} \EE \Big[\int_0^T f^{K,\rmc}(t, X_t^{\balpha^{\rmc}}, \alpha^{\rmc}_t, \mu^{\rmnc}_t, \mu_t^{\balpha^{\rmc}}, K^{\rmc}_t)dt 
    + g^{K,\rmc}(X_T^{\balpha^{\rmc}}, \mu^{\rmnc}_T, \mu_T^{\balpha^{\rmc}}, K^{\rmc}_T)\Big],
\end{aligned}
\end{equation}
where the dynamics of individual state of representative cooperative agent $\bX^{\balpha^{\rmc}}$ and CPR state $\bK$ are given as follows:
\begin{equation*}
    \begin{aligned}
        d X^{\balpha^{\rmc}}_t 
        &= b^{\rmc}(t, X^{\balpha^{\rmc}}_t, \alpha_t, \mu^{\rmnc}_t, \mu_t^{\balpha^{\rmc}}, K^{\rmc}_t)dt  + \sigma^{\rmc} dW^{\rmc}_t,
        \\
        d K^{\rmc}_t 
        &= \big(b^K_0(t, K^{\rmc}_t) - b^{K}_1(t, K^{\rmc}_t, \mu^{\rmnc}_t, \mu_t^{\balpha^{\rmc}})\big)dt,
    \end{aligned}
\end{equation*}
where $K^{\rmc}_0=k_0\in\RR^k_+$. We want to re-emphasize that different than the model of the representative non-cooperative agent, the cooperative agents are aware of their effect on the CPR dynamics and mathematically this is modeled by inputting $\mu_t^{\balpha^{\rmc}}$ in the dynamics of the CPRs instead of $\mu_t^{{\rmc}}$. 

\begin{remark}
\label{rem:common_K_MP}
We emphasize that even if the representative non-cooperative and cooperative agents observe different CPR dynamics, at the equilibrium we will have $\bmu^{\hat{\balpha}^{\rmc}} = \hat \bmu^{\rmc}$ and since because there is no randomness in the CPR dynamics and $K_0^{\rmnc} = K_0^{\rmc} = k_0$, at the equilibrium $\bK^{\rmnc}$ is equivalent to $\bK^{\rmc}$. This means at the equilibrium, there is only one CPR dynamics that are commonly observed by the non-cooperative and cooperative agents, and it can be denoted with $\bK$.    
\end{remark}

We can then introduce the extended states $\tilde{X}^{\balpha^{\rmnc}}_t = (X^{\balpha^{\rmnc}}_t, K^{\rmnc}_t)$ and $\tilde{X}^{\balpha^{\rmc}}_t = (X^{\balpha^{\rmc}}_t, K^{\rmc}_t)$,
and following a discussion similar to the one introduced in Section~\ref{subsec:mi_CPR}, this problem can be written as a mixed population mean field model as in Section~\ref{sec:mixed-population-mf}. Then, the same theoretical results hold for this setting.

\section{Motivating application: Fisheries model}
\label{sec:fishing-model}

Our motivating application is one of the most well known examples of CPRs which is modeling fishers and their interaction with the fish population. Our model is inspired by an MFG model introduced by~\cite{McPike_mfg_fish} and we include the altruistic tendencies of individual fishers or the presence of cooperative fishers in the models. The fish stock at time $t\in[0,T]$ is modeled as
the CPR and for the sake of simplicity in notation, we assume the dimension of the CPR is equal to $1$; in other words, we focus on one fish species. The models can be extended straightforwardly to include different species of fish that have for instance different reproduction rates.

\subsection{Mean field game and control fisheries model}
\label{subsec:fisher_mfg_mfc}

For the sake of completeness, we start by introducing the mathematical models of the fishing application in mean field game (MFG) and mean field control (MFC) setups.
Since MFG and MFC are special cases of the mixed mean field models, theoretical results for MFG and MFC can be deduced from the results of the mixed mean field models so we omit them.

\subsubsection{Mean field game. } \label{subsubsec:fisher_mfg}

We first state the model of the representative fisher in the MFG setup. The representative agent controls her fishing effort, $\bX^{\balpha}=(X^{\balpha}_t)_{t\in[0, T]}$, by choosing her change in effort level, $\balpha= (\alpha_t)_{t\in[0,T]}$:
\begin{equation}
\label{eq:fish_dyn_MFG}
    dX_t^{\balpha} = \alpha_t dt + \sigma dW_t,\quad X_0 \sim \mu_0,
\end{equation}
where $\sigma>0$ is a constant volatility.
The CPR dynamics for the fish stock are given as follows:
\begin{equation}
\begin{aligned}
\label{eq:fish_ODE_MFG}
    \dfrac{dK_t}{dt} = b_0(K_t) - b_1(K_t,\overline{X}_t),
\end{aligned}
\end{equation}
where $b_0(k)=  r k (1- \sfrac{k}{\cK})$ is the reproduction function, $r>0$ is the reproduction rate and $\cK$ is the carrying capacity of fish stock. The function $b_1(k, \overline{x})=q k \overline{x}$ is the fishing function where $q>0$ is a constant that represents the size of fisher population. Function $b_1$ implies that the amount of fish that is caught by fishers is a function of the fish stock level $K_t$ and the aggregate effort the fisher population puts in catching the fish, $q\overline{x}$. In this model, the representative agent take the aggregate effort level in the population exogenously while finding her best response. Intuitively, this means that she does not see her own individual effect on the CPR dynamics. 

The objective of the representative agent in MFG is to minimize the following cost functional by choosing their effort level $\balpha= (\alpha_t)_{t\in[0,T]}$ given the average fishing effort in the population, $\overline{\bX}$:
\begin{equation}
\label{eq:fish_cost_MFG}
    J(\balpha;\overline{\bX}):=\EE\int_0^T \big(f_1(X_t^{\balpha}, \alpha_t) -f_2(X_t^{\balpha},K_t,\overline{X}_t)\Big) dt.
\end{equation}
Here, $f_1(x, a) = c_1 x + \sfrac{c_2}{2}\ x^2 + \sfrac{c_3}{2}\ a^2$; the first two terms represent the cost of putting effort $x$ into fishing and the third term is the cost of changing the effort level, where $c_1, c_2, c_3>0$ are constant coefficients. The function $f_2(x, k, \overline{x})= P(k, \overline{x})k x $ is the revenue earned from fishing where $P(k, \overline{x})$ is the price function for one unit of fish and $kx$ represents the total amount of fish caught by the representative fisher. The price function $P$ is designed as a decreasing function of aggregate fishing at time $t$ to capture market dynamics. For simplicity, we will assume that it is an inverse supply function in the form of $P\big(k, \overline{x}) = p_0 -p_1 (qk \overline{x})$ where $p_0, p_1>0$ are constant coefficients.

Then, MFG Nash equilibrium is the couple $(\hat{\balpha}, \hat{\overline{\bX}})$ such that
\begin{itemize}
    \item[i.] $\hat{\balpha}$ is the minimizer of the cost~\eqref{eq:fish_cost_MFG} given $\hat{\overline{\bX}}$,
    \item[ii.] $\hat{\overline{X}}_t=\EE[X^{\hat{\balpha}}_t]$, for all $t\in[0,T]$.
\end{itemize}
Notice that the aggregate does not change at all when the individual agent changes her control. This is due to the fact that the deviation is unilateral and the agent is infinitesimal.

\subsubsection{Mean field control. }\label{subsubsec:fisher_mfc}

Next, we state the model of the representative fisher in the MFC setup. Unlike in MFG, the representative agent in an MFC model observes their effect on the mean field, i.e., the aggregate fishing effort: when the representative agent deviates from their control, the aggregate effort also changes.

Similar to MFG, the representative agent controls their fishing effort, $\bX^{\balpha}=(X^{\balpha}t){t\in[0, T]}$, by selecting their change in effort level, $\balpha= (\alpha_t)_{t\in[0,T]}$: \begin{equation} \label{eq:fish_dyn_MFC} dX_t^{\balpha} = \alpha_t dt + \sigma dW_t,\quad X_0 \sim \mu_0. \end{equation}

The CPR dynamics for the fish stock are given by: \begin{equation} \begin{aligned} \label{eq:fish_ODE_MFC} \dfrac{dK_t}{dt} &= b_0(K_t) - b_1(K_t,\overline{X}^{\balpha}_t), \end{aligned} \end{equation} where $\overline{X}^{\balpha}_t=\EE[X^{\balpha}_t]$. The functions $b_0$ and $b_1$ are defined similarly to Section~\ref{subsubsec:fisher_mfg}.

The objective of the representative fisher in MFC is to minimize the following cost functional by selecting their effort level $\balpha= (\alpha_t)_{t\in[0,T]}$: 
\begin{equation} \label{eq:fish_cost_MFC}  
    J(\balpha):=\EE\int_0^T \big(f_1(X_t^{\balpha}, \alpha_t) - f_2(X_t^{\balpha},K_t,\overline{X}^{\balpha}_t)\big) dt, 
\end{equation} 
where $\overline{X}^{\balpha}_t=\EE[X^{\balpha}_t]$. The functions $f_1$ and $f_2$ are defined similarly to Section~\ref{subsubsec:fisher_mfg}. The key distinction between the MFC and MFG models is that in MFC, the representative fisher determines their optimal control while accounting for their impact on the mean field $\overline{\bX}^{\balpha}=\mathbb{E}[\bX^{\balpha}]$. The MFC-optimal control is the policy $\hat\balpha$ that minimizes the cost $J(\cdot)$ given in~\eqref{eq:fish_cost_MFC}.

\subsection{Mixed individual fisheries model}
\label{subsubsec:MI_fish}
For the mixed individual (MI) fisheries model, we focus on a representative agent with an altruism level of $\lambda\in[0,1]$. As similar to the models in Section~\ref{subsec:fisher_mfg_mfc}, the representative fisher will choose her change in fishing effort, $\balpha = (\alpha_t)_{t \in [0,T]}$, to control her individual state, namely, fishing effort, $\bX = (X_t)_{t\in[0,T]}$. The individual state dynamics will be the same as in equation~\eqref{eq:fish_dyn_MFG} or equivalently in equation~\eqref{eq:fish_dyn_MFC}. The perceived CPR dynamics for the fish stock are given as follows:
\begin{equation}
\begin{aligned}
\label{eq:fish_ODE_MI}
    \dfrac{dK_t}{dt} 
    &= b_0(K_t) - b_1(K_t, \lambda \overline{X}_t + (1-\lambda)\overline{X}_t^{\balpha}),
\end{aligned}
\end{equation}
where $\overline{X}^{\balpha}_t=\EE[X^{\balpha}_t]$. Functions $b_0$ and $b_1$ are defined similarly to Section~\ref{subsec:fisher_mfg_mfc}.

The objective of the representative fisher in MI MFG is to minimize the following cost functional by choosing her effort level $\balpha= (\alpha_t)_{t\in[0,T]}$ given the average fishing effort in the population, $\overline{\bX}$:
\begin{equation}
\label{eq:fish_cost_MI}
\begin{aligned}
&J^\mixind(\balpha; \overline{\bX}):=\\
    &\EE\int_0^T \big(f_1(X_t^{\balpha}, \alpha_t)-f_2( X_t^{\balpha},K_t,\lambda\overline{X}_t + (1-\lambda)\overline{X}_t^{\balpha})\Big) dt
\end{aligned}
\end{equation}
where $\overline{X}^{\balpha}_t=\EE[X^{\balpha}_t]$. Functions $f_1$ and $f_2$ are defined similarly to Section~\ref{subsec:fisher_mfg_mfc}. In this model, fishers perceive their effect on the fish stock level depending on their altruism level $\lambda\in[0,1]$. We stress that if $\lambda=1$, the MI MFG model corresponds to the MFG model introduced in Section~\ref{subsubsec:fisher_mfg}, and if $\lambda=0$, the MI MFG model corresponds to the MFC model introduced in Section~\ref{subsubsec:fisher_mfc}.

Modifying Definition~\ref{def:mixed_individual_mfg_nash}, we will call a tuple of control, mean field of individual state and CPR state $(\hat\balpha, \overline \bX, \bK)$ a mixed individual mean field Nash equilibrium (MI-NFNE) if $\hat\balpha$ is the minimizer of the cost functional $J^\mixind(\cdot;\overline{\bX})$ given in~\eqref{eq:fish_cost_MI} and if $\overline{X}_t = \overline{X}_t^{\hat\balpha}$ for all $t\in[0,T]$.

\begin{corollary}
\label{cor:fisher_fbsde_mi}
Let $(\hat\balpha, \overline{\bX}, \bK)$ be a mixed individual mean field Nash equilibrium (MI-MFNE). Then, the MI-MFNE control is
given by $\hat\alpha_t = - \sfrac{Y_t^2}{c_3}$ where the tuple of processes $(\bK, \bX, \bY^1, \bY^2, \bZ^1, \bZ^2)$ solves the following MKV FBSDE:
\begin{equation}
\label{eq:fisher_mi_fbsde}
    \begin{aligned}
        dK_t &= \big[rK_t \left(1-K_t/K\right) - q K_t \overline{X}_t\big] dt,
        \\
        dX_t 
        & = -\frac{Y_t^2}{c_3}dt + \sigma dW_t,
        \\
        dY_t^1 
        &= -\Big[\big(r-\frac{2r}{K}K_t - q \overline{X}_t\big) Y_t^1 + 2p_1 q K_t \overline{X}_t X_t
        {-p_0 X_t}\Big] dt + Z_t^1 dW_t,
        \\
        dY_t^2 
        &= -\Big[p_1 q (2-\lambda)K_t^2 \overline{X}_t +c_1+c_2 X_t 
        - q(1-\lambda)K_t\overline{Y}^1_t { -p_0K_t}\Big]dt + Z_t^2 dW_t,
        \\
        K_0 
        &= \rho \cK, \quad X_0 \sim \mu_0, \quad Y_T^1=Y_T^2=0. 
    \end{aligned}
\end{equation}
\end{corollary}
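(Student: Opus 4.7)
The plan is to apply Theorem~\ref{thm:MI-Pontryagin} to the fisheries model after identifying it as an instance of the mixed individual mean field model with CPR from Section~\ref{subsec:mi_CPR}. Following the reduction given there, I embed the problem into the MI-MF framework using the extended state $\tilde X_t = (X_t, K_t)$, extended measures $\tilde\mu_t = \mu_t \otimes \delta_{K_t}$ and $\tilde\mu_t^{\balpha} = \mu_t^{\balpha} \otimes \delta_{K_t}$, stacked drift, volatility $\tilde\sigma = [\sigma,0]^\top$, and running cost $\tilde f(t,(x,k),\alpha,\tilde\mu,\tilde\mu') = f_1(x,\alpha) - f_2(x,k,\lambda\bar\mu+(1-\lambda)\bar{\mu'})$, where $\bar\mu$ denotes the first moment of $\mu$; there is no terminal cost. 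In this formulation, $Y^1$ and $Y^2$ in~\eqref{eq:fisher_mi_fbsde} become the adjoint components associated to $K$ and $X$ respectively.

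With these identifications the mixed individual Hamiltonian reads
\[
H^{\mixind}\big(t,(x,k),\alpha,\mu,\mu',(y^1,y^2),z\big) = y^1 b^K(t,k,\mu,\mu') + y^2 \alpha + f_1(x,\alpha) - f_2\big(x,k,\lambda\bar\mu+(1-\lambda)\bar{\mu'}\big),
\]
which is strictly convex in $\alpha$. The first-order condition $\partial_\alpha H = y^2 + c_3\alpha = 0$ yields the unique minimizer $\hat\alpha_t = -Y_t^2/c_3$, producing the drift of the forward SDE for $X$; the forward ODE for $K$ follows from~\eqref{eq:fish_ODE_MI} together with the equilibrium condition $\lambda\bar X_t + (1-\lambda)\bar X_t = \bar X_t$. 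The terminal conditions $Y^1_T = Y^2_T = 0$ are immediate since the terminal cost vanishes.

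I then derive the two backward equations via~\eqref{eq:FBSDE-MI}. For $Y^1$, I compute $\partial_k H$: the contribution $y^1 \partial_k b^K$ gives $(r - 2rk/\cK - q\bar X)y^1$ at equilibrium, and $-\partial_k f_2$ gives $-p_0 x + 2 p_1 q k x \bar X$, so that negating produces the stated $Y^1$ drift. No Lions-derivative term arises here since $K$ is common and its marginal in the extended measure is Dirac. For $Y^2$, I compute $\partial_x H = c_1 + c_2 x - p_0 k + p_1 q k^2(\lambda\bar\mu+(1-\lambda)\bar{\mu'})$ together with the Lions-derivative term $\tilde\EE[\partial_{\mu'} H(t,\tilde X_t,\tilde{\hat\alpha}_t,\mu_t,\mu_t,\tilde Y_t,\tilde Z_t)(X_t)]$. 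The key observation is that $H$ depends on $\mu'$ only through the linear functional $\bar{\mu'}$, so the Lions derivative equals $(1-\lambda)[-\tilde y^1 q k + p_1 q k^2 \tilde x]$ and is constant in the evaluation point; taking tilded expectation at equilibrium yields $(1-\lambda)[-q K_t \bar Y_t^1 + p_1 q K_t^2 \bar X_t]$. Adding to $\partial_x H$ at equilibrium, collecting terms, and negating gives exactly the drift stated in~\eqref{eq:fisher_mi_fbsde}.

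The main technical delicacy is the Lions-derivative bookkeeping: tracking that the $(1-\lambda)$ contribution from the Lions derivative \emph{adds to}, rather than replaces, the $\lambda+(1-\lambda)=1$ contribution already inside $\partial_x H$ at equilibrium is exactly what produces the $(2-\lambda)$ factor in the $Y^2$-drift, and the analogous combination inside the $b^K$ part of the Hamiltonian yields the $-(1-\lambda) q K_t \bar Y_t^1$ term. All remaining manipulations are routine algebra, justified by the smoothness and polynomial growth of $b^K,f_1,f_2$ satisfying the technical assumptions of Appendix~\ref{app:MI-Pontryagin}.
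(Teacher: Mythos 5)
Your proposal is correct and follows essentially the same route as the paper: identify the fisheries model as a mixed individual CPR model via the extended state $(X_t,K_t)$, write the Hamiltonian $H^{\mixind}$, obtain $\hat\alpha_t=-Y_t^2/c_3$ from the first-order condition in $\alpha$, and plug into the FBSDE of Theorem~\ref{thm:MI-Pontryagin}. The paper's proof simply states "plug the related information in," whereas you carry out the derivative computations explicitly — in particular the Lions-derivative bookkeeping showing that the $(1-\lambda)$ contribution adds to the equilibrium value of $\partial_x H$ to produce the $(2-\lambda)$ coefficient, and that no Lions term enters the $Y^1$ equation — all of which checks out against~\eqref{eq:fisher_mi_fbsde}.
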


\begin{proof}{Proof of Corollary~\ref{cor:fisher_fbsde_mi}}
    We utilize Theorem~\ref{thm:MI-Pontryagin}, where the Hamiltonian is written as:
    \begin{equation*}
\begin{aligned}
    H^\mixind(t, x, \alpha, \overline{x}, \overline{x}^{\prime}, k, y^1, y^2)
    =& \big(rk(1-k/K)-qk(\lambda\overline{x} + (1-\lambda)\overline{x}^{\prime})\big) y^1 +\alpha y^2 
    \\
    &
    +c_1x +(c_2/2) x^2 + (c_3/2) \alpha^2
    -\big(p_0 -p_1(qk(\lambda\overline{x}+(1-\lambda)\overline{x}^{\prime})\big)kx. 
\end{aligned}
\end{equation*}
Then, we have $\partial_\alpha H = y^2 +c_3 \alpha$, hence $\hat\alpha = -y^2/c_3$.
We conclude by plugging the related information in the FBSDE system given in Theorem~\ref{thm:MI-Pontryagin}.\hfill%

\end{proof}

\begin{theorem}
\label{the:existence_uniqueness_fisher_mi}
Under small time condition, there exists a unique mixed individual mean field Nash equilibrium.

\end{theorem}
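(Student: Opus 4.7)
The plan is to establish existence and uniqueness of the MI-MFNE via a contraction mapping argument applied directly to the FBSDE characterization provided by Corollary~\ref{cor:fisher_fbsde_mi} (which is equivalent to the equilibrium problem by Theorem~\ref{thm:MI-Pontryagin}). The key structural observation I will exploit is that, because there is no common noise in the model, the CPR trajectory $\bK$ is deterministic and the mean-field quantities $\overline{\bX}$ and $\overline{\bY}^1$ are deterministic expectations; consequently, once these three deterministic trajectories are fixed as inputs, the remaining FBSDE for $(\bX, \bY^1, \bY^2, \bZ^1, \bZ^2)$ reduces to a classical (non McKean--Vlasov) FBSDE.

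First, I would define a solution map $\Phi$ on the Banach space $\cX_T := C([0,T];\RR) \times C([0,T];\RR) \times C([0,T];\RR_+)$ endowed with the sup norm. Given an input $(\overline{\bX}^{\mathrm{in}}, \overline{\bY}^{1,\mathrm{in}}, \bK^{\mathrm{in}})$, I would proceed in three substeps: (i) solve the forward logistic-type ODE for $K$ driven by $\overline{\bX}^{\mathrm{in}}$, whose well-posedness follows from standard ODE theory and whose solution stays in a compact subset of $\RR_+$ given a bound on $\overline{\bX}^{\mathrm{in}}$; (ii) with $(\overline{\bX}^{\mathrm{in}}, \overline{\bY}^{1,\mathrm{in}}, \bK)$ now viewed as exogenous, solve the resulting classical FBSDE for $(\bX, \bY^1, \bY^2, \bZ^1, \bZ^2)$, whose coefficients are affine in $(X, Y^1, Y^2)$ with Lipschitz constants controlled by the a priori bounds on the inputs; small-time solvability of this FBSDE follows from standard results (e.g., Antonelli-type arguments or Peng--Wu contraction); (iii) define the output as $\Phi(\overline{\bX}^{\mathrm{in}}, \overline{\bY}^{1,\mathrm{in}}, \bK^{\mathrm{in}}) := (\EE[\bX], \EE[\bY^1], \bK)$. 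A fixed point of $\Phi$ yields a solution to the full MKV FBSDE~\eqref{eq:fisher_mi_fbsde}, and conversely any MI-MFNE is a fixed point.

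The core of the argument is the contraction estimate. Taking two inputs with differences $\delta \overline{X}$, $\delta \overline{Y}^1$, $\delta K$, I would apply Gronwall to the ODE comparison for $\delta K$, and apply Ito's formula to $|\delta X|^2$, $|\delta Y^1|^2$, $|\delta Y^2|^2$ to produce standard forward and backward SDE stability estimates. The resulting constants depend on uniform a priori bounds for $(\bK, \bX, \bY^1, \bY^2, \overline{\bX}, \overline{\bY}^1)$; these I would secure by first restricting $\Phi$ to a closed ball of $\cX_T$ whose radius is chosen via energy estimates on the FBSDE with zero terminal condition, then showing that $\Phi$ stabilizes this ball. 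Taking $T$ sufficiently small, all Gronwall prefactors become strictly less than one, which gives both invariance of the ball and a contraction on it; Banach's fixed point theorem then yields a unique fixed point, hence a unique MI-MFNE.

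The hard part will be controlling the bilinear and trilinear couplings arising from the fishing rate $qk\overline{x}$ and the quadratic inverse-supply revenue $p_1(qk\overline{x})kx$, which produce cross terms such as $K_t \overline{X}_t X_t$ in the $\bY^1$ equation and $K_t^2 \overline{X}_t$, $K_t \overline{Y}^1_t$ in the $\bY^2$ equation. These prevent a clean separation of variables and couple the a priori bounds to one another, so the self-invariance of the ball must be closed in a self-referential fashion. This is precisely where the small-time assumption enters essentially: only for $T$ small can one ensure that the Lipschitz constants produced by these cross terms, multiplied by $T$, yield factors strictly less than one in the Gronwall estimate. Once this bookkeeping is done, existence and uniqueness of the MI-MFNE follow simultaneously from the Banach fixed point theorem.
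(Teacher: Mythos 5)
Your proposal is correct in substance and rests on the same core observation as the paper's proof --- absent common noise, $\bK$, $\overline{\bX}$ and $\overline{\bY}^1$ are deterministic, so freezing them decouples the McKean--Vlasov structure --- but the two arguments are organized differently. The paper first takes expectations of the FBSDE~\eqref{eq:fisher_mi_fbsde} to obtain a purely deterministic forward--backward ODE system for $(\bK,\overline{\bX},\overline{\bY}^1,\overline{\bY}^2)$, runs the small-time contraction entirely at that deterministic level (iterating on $\overline{\bX}$ alone through the chain $\overline{\bX}\mapsto \bK\mapsto\overline{\bY}^1\mapsto\overline{\bY}^2\mapsto\check{\overline{\bX}}$), and only then returns to the stochastic system, which has become linear and is solved exactly by the Riccati-type ansatz $Y^2_t=A_tX_t+B_t$, $Y^1_t=C_tX_t+D_t$; the stochastic layer therefore needs no additional smallness hypothesis. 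You instead carry the full stochastic FBSDE through each iteration of the fixed-point map, which forces you to invoke small-time solvability and stability of a coupled linear FBSDE (Antonelli or Peng--Wu type) inside every step and to close the a priori bounds on the invariant ball in a self-referential way. Both routes work; yours is more uniform and would generalize beyond the affine-in-$(x,\alpha)$ structure of this model, while the paper's exploits that structure to get an explicit solution of the stochastic part given the means and confines the smallness requirement to a single deterministic contraction constant. Two bookkeeping remarks: including $\bK$ and $\overline{\bY}^1$ as independent components of your fixed-point space is redundant (both are determined by $\overline{\bX}$ once the inner problem is solved, which is why the paper contracts on $\overline{\bX}$ alone), and your claim that every MI-MFNE is a fixed point of $\Phi$ uses the necessity direction of Theorem~\ref{thm:MI-Pontryagin}, exactly as the paper does via Corollary~\ref{cor:fisher_fbsde_mi}.
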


\begin{proof}{Proof Sketch of Theorem~\ref{the:existence_uniqueness_fisher_mi}}
    For the sake of brevity, we give only the sketch of the proof; the details can be found in Appendix~\ref{app:fisher_mi_proofs}. The proof consists of two steps:

    \noindent\textbf{Step 1:} We show the existence and uniqueness of the mean processes $(\overline{\hat{\balpha}}, \boldsymbol{K}, \overline{\bX}, \overline{\bY}^1, \overline{\bY}^2)$ by using Banach fixed point theorem. For this reason, we take the expectation of the FBSDE system in~\eqref{eq:fisher_mi_fbsde} and find a forward backward ordinary differential equation (FBODE) system for the tuple $(\overline{\hat{\balpha}}, \boldsymbol{K}, \overline{\bX}, \overline{\bY}^1, \overline{\bY}^2)$. We insist that the expected CPR dynamics in the FBODE system will be the same with the CPR dynamics in the FBSDE system since they do not have randomness. We treat the FBODE as a mapping that takes $\overline{\bX}$ as input and returns $\check{\overline{\bX}}$ as output, and show that it is a contraction mapping under the small time condition to apply the Banach fixed point theorem. Global-in-time solutions can be obtained by patching together solutions over a small time interval, which can be done under suitable conditions; see e.g.~\cite[Section 4.1.2]{CarmonaDelarue_book_I}.
    
    \noindent\textbf{Step 2:} Given unique mean processes $(\overline{\hat{\balpha}}, \boldsymbol{K}, \overline{\bX}, \overline{\bY}^1, \overline{\bY}^2)$, the FBSDE in~\eqref{eq:fisher_mi_fbsde} becomes linear. Therefore, we propose linear ansatz for the adjoint processes $\bY^1$ and $\bY^2$. Then, we conclude by using the existence and uniqueness results for the linear ODEs and Riccati differential equations that results from the proposition of linear ansatz for the adjoint processes. 
    \hfill%
\end{proof}

\subsection{Mixed population fisheries model}
For the mixed population (MP) fisheries model, we focus on a population of fishers in which $p$ proportion of the population is non-cooperative and $(1-p)$ proportion of the population is cooperative. Since cooperative and non-cooperative agents have different models, we need to introduce the model for both the representative non-cooperative and the representative cooperative agent.

Similar to the previous sections, the representative \textbf{non-cooperative} agent will choose her change in fishing effort, $\balpha^{\rmnc}= (\alpha^{\rmnc}_t)_{t\in[0,T]}$, to control her individual state – fishing effort, $\bX^{\balpha^\rmnc}=(X_t^{\balpha^\rmnc})_{t\in[0,T]}$. The individual state dynamics will given as follows:
\begin{equation*}
    dX_t^{\balpha^\rmnc} = \alpha^{\rmnc}_t dt + \sigma^{\rmnc} dW_t^{\rmnc},
\end{equation*}
where $\sigma^{\rmnc}>0$ is the constant volatility term. The CPR dynamics for the fish stock that are perceived by the non-cooperative representative agent are given as follows:
\begin{equation}
    \label{eq:fisher_CPR_mp_nc}
    \dfrac{dK^{\rmnc}_t}{dt} = b_0(K^{\rmnc}_t) - b_1(K^{\rmnc}_t,  p\overline{X}^{\rmnc}_t +(1-p)\overline{X}^{\rmc}_t).
\end{equation}
Functions $b_0$ and $b_1$ are defined similarly to Section~\ref{subsec:fisher_mfg_mfc}. This CPR dynamics emphasize that the non-cooperative representative agent takes the population distributions for the non-cooperative and cooperative agents exogenously, and she does not see her own direct effect on the CPR dynamics.

The objective of the representative non-cooperative fisher in MP MFG is to minimize the following cost functional by choosing her effort level $\balpha^{\rmnc}= (\alpha^{\rmnc}_t)_{t\in[0,T]}$ given the average fishing effort of the non-cooperative and cooperative fishers, $\overline{\bX}^{\rmnc}$ and $\overline{\bX}^{\rmc}$, respectively:
\begin{equation}
\label{eq:fish_cost_MP_NC}
\begin{aligned}
    &J^{\mixpop, \rmnc}(\balpha^{\rmnc}; \overline{\bX}^{\rmnc},\overline{\bX}^{\rmc}):=\EE\int_0^T \big(f_1(X_t^{\balpha^\rmnc}, \alpha^{\rmnc}_t)
    -f_2( X_t^{\balpha^\rmnc},K^{\rmnc}_t,p\overline{X}^{\rmnc}_t + (1-p)\overline{X}_t^{\rmc})\Big) dt.
\end{aligned}
\end{equation}
 Functions $f_1$ and $f_2$ are defined similarly to Section~\ref{subsec:fisher_mfg_mfc}. Non-cooperative agents take the population distribution of states -- in this model the average fishing effort for both non-cooperative and cooperative agents exogenously.

Similar to the model of the representative non-cooperative agent (and the previous sections), the representative \textbf{cooperative} agent will choose her change in fishing effort, $\balpha^{\rmc}= (\alpha^{\rmc}_t)_{t\in[0,T]}$, to control her individual state – fishing effort, $\bX^{\balpha^\rmc}=(X_t^{\balpha^\rmc})_{t\in[0,T]}$. The individual state dynamics will given as follows:
\begin{equation*}
    dX_t^{\balpha^\rmc} = \alpha^{\rmc}_t dt + \sigma^{\rmc} dW_t^{\rmc},
\end{equation*}
where $\sigma^{\rmc}>0$ is the constant volatility term and $\bW^{\rmc}$ is a Wiener process that is independent of $\bW^{\rmnc}$. The CPR dynamics for the fish stock that are perceived by the cooperative representative agent are given as follows: 
\begin{equation}
    \label{eq:fisher_CPR_mp_c}
    \dfrac{dK^{\rmc}_t}{dt} = b_0(K^{\rmc}_t) - b_1(K^{\rmc}_t,  p\overline{X}^{\rmnc}_t +(1-p)\overline{X}^{\balpha^\rmc}_t),
\end{equation}
where $\overline{X}_t^{\balpha^\rmc} = \EE[X^{\balpha^\rmc}_t]$. Functions $b_0$ and $b_1$ are defined similarly to Section~\ref{subsec:fisher_mfg_mfc}. This CPR dynamics emphasize that the non-cooperative representative agent takes the population distribution for the non-cooperative agents exogenously; however, she sees her own direct effect on the dynamics through the process $\overline{\bX}^{\balpha^\rmc}$.

The objective of the representative cooperative fisher in MP MFG is to minimize the following cost functional by choosing her effort level $\balpha^{\rmc}= (\alpha^{\rmc}_t)_{t\in[0,T]}$ given the average fishing effort of the non-cooperative fishers, $\overline{\bX}^{\rmnc}$:
\begin{equation}
\label{eq:fish_cost_MP_C}
\begin{aligned}
    &J^{\mixpop, \rmc}(\balpha^{\rmc}; \overline{\bX}^{\rmnc}):=\EE\int_0^T \big(f_1(X_t^{\balpha^\rmc}, \alpha^{\rmc}_t)
    -f_2( X_t^{\balpha^\rmc},K^{\rmc}_t,p\overline{X}^{\rmnc}_t + (1-p)\overline{X}_t^{\balpha^\rmc})\Big) dt,
\end{aligned}
\end{equation}
where $\overline{X}_t^{\balpha^\rmc} = \EE[X^{\balpha^\rmc}_t]$. Functions $f_1$ and $f_2$ are defined similarly to Section~\ref{subsec:fisher_mfg_mfc}. Cooperative agents take the population distribution of states -- in this model the average fishing effort for non-cooperative agents exogenously; however, they see their own effect on the system through $\overline{\bX}_t^{\balpha^\rmc}$.

Following Remark~\ref{rem:common_K_MP}, at the equilibrium we will have $\overline{\bX}^{\rmc} = \overline{\bX}^{\balpha^{\rmc}}$ and; furthermore, since $K_0^{\rmnc}=K_0^{\rmc}=k_0$ and there is no randomness in the CPR dynamics, at the equilibrium there is one common CPR process $\bK^{\rmnc}=\bK^{\rmc}$. To simplify the notation, we will drop the superscript and use notation $\bK$ for the CPR dynamics at the equilibrium of mixed population mean field model.

Modifying Definition~\ref{def:mixedpop_mfg_nash}, we will call a tuple of controls, mean field of individual states, and CPR state $(\hat\balpha^{\rmnc}, \hat\balpha^{\rmc}, \overline{\bX}^{\rmnc}, \bK)$ a mixed population mean field equilibrium (MP-NFE) if $\hat\balpha^{\rmnc}$ is the minimizer of the cost functional $J^{\mixind, \rmnc}(\cdot;\overline{\bX}^{\rmnc}, \overline{\bX}^{\balpha^{\rmc}})$ given in~\eqref{eq:fish_cost_MP_NC}, if $\hat\balpha^{\rmc}$ is the minimizer of the cost functional $J^{\mixind, \rmc}(\cdot;\overline{\bX}^{\rmnc})$ given in~\eqref{eq:fish_cost_MP_C}, and if $\overline{X}^{\rmnc}_t = \overline{X}_t^{\hat\balpha^{\rmnc}}$ for all $t\in[0,T]$.

\begin{corollary}
\label{cor:fisher_fbsde_mp}
Let $(\hat\balpha^{\rmnc}, \hat\balpha^{\rmc}, \overline{\bX}^{\rmnc}, \bK)$ be a mixed population mean field equilibrium (MP-MFE). Then, MP-MFE controls are
given by $\hat\alpha^{\rmnc}_t = - Y_t^{2,\rmnc}/c^{\rmnc}_3$ and $\hat\alpha^{\rmc}_t = - Y_t^{2,\rmc}/c^{\rmc}_3$ where the tuple of processes $(\bK, \bX^{\rmnc}, \bX^{\rmc}, \bY^{1, \rmnc}, \bY^{1, \rmc}, \bY^{2,\rmnc}, \bY^{2,\rmc}, \bZ^{1, \rmnc}, \bZ^{1, \rmc},$ $ \bZ^{2, \rmnc}, \bZ^{2, \rmc})$ solves the following MKV FBSDE:
\begin{equation}
\label{eq:fisher_mp_fbsde}
    \begin{aligned}
        dK_t =&rK_t \big(1-\dfrac{K_t}{K}\big) - q K_t (p\overline{X}^{\rmnc}_t + (1-p)\overline{X}^{\rmc}_t) dt,\\
        dX^{\rmnc}_t  =& -\frac{Y_t^{2,\rmnc}}{c^{\rmnc}_3}dt + \sigma^{\rmnc} dW^{\rmnc}_t,\\
        dX^{\rmc}_t  =& -\frac{Y_t^{2,\rmc}}{c^{\rmc}_3}dt + \sigma^{\rmc} dW^{\rmc}_t,\\
        dY_t^{1,\rmnc} =& -\hskip-0.5mm\big[\hskip-0.5mm\big(r-\tfrac{2r}{K}K_t - q (p\overline{X}^{\rmnc}_t + (1-p)\overline{X}^{\rmc}_t)\big) Y_t^{1,\rmnc} \\
        &- 2p_1 q K_t (p\overline{X}^{\rmnc}_t + (1-p)\overline{X}^{\rmc}_t) X^{\rmnc}_t
        +p_0X_t^{\rmnc}\big] dt + Z_t^{1,\rmnc} dW^{\rmnc}_t,\\
        dY_t^{2,\rmnc} =& -\big[{ -p_0 K_t} + p_1 q K_t^2 (p\overline{X}^{\rmnc}_t + (1-p)\overline{X}^{\rmc}_t)
        +c^{\rmnc}_1+c^{\rmnc}_2 X^{\rmnc}_t \big]dt+Z_t^{2,\rmnc} dW^{\rmnc}_t,\\
        dY_t^{1,\rmc} =& -\hskip-0.5mm\big[\big(r-\frac{2r}{K}K_t - q (p\overline{X}^{\rmnc}_t + (1-p)\overline{X}^{\rmc}_t)\big) Y_t^{1,\rmc} \\
        &+ 2p_1 q K_t (p\overline{X}^{\rmnc}_t + (1-p)\overline{X}^{\rmc}_t) X^{\rmc}_t
        {-p_0 X_t^{\rmc}}\big] dt + Z_t^{1,\rmc} dW^{\rmc}_t,\\
        dY_t^{2,\rmc} =& -\big[{ -p_0 K_t}+p_1 q K_t^2 (p\overline{X}^{\rmnc}_t + (1-p)\overline{X}^{\rmc}_t) 
        \\&
        +c^{\rmc}_1+c^{\rmc}_2 X^{\rmc}_t -qK_t(1-p)\overline{Y}_t^{1,\rmc} 
        + p_1qK_t^2(1-p)\overline{X}^{\rmc}_t\big]dt+Z_t^{2,\rmc} dW^{\rmc}_t,\\
        K_0 =& \rho \cK, \quad X^{\rmnc}_0 \sim \mu^{\rmnc}_0, X^{\rmc}_0 \sim \mu^{\rmc}_0, \quad
        Y_T^{1,\rmnc}=Y_T^{2,\rmnc}=Y_T^{1,\rmc}=Y_T^{2,\rmc}=0. 
    \end{aligned}
\end{equation}
\end{corollary}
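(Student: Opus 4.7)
The plan is to derive Corollary~\ref{cor:fisher_fbsde_mp} as a direct specialization of Theorem~\ref{thm:MP-Pontryagin}, using the extended-state reduction of Section~\ref{sec:CPR} to absorb the CPR $\bK$ into the state of each representative agent. Under this embedding, each type $\pi\in\{\rmc,\rmnc\}$ carries an extended state $(X^\pi_t,K^\pi_t)\in\RR^2$ with adjoint variables $(Y^{1,\pi},Y^{2,\pi})$ attached respectively to the CPR component and the effort component, and diffusion only on the effort. The corollary then amounts to computing the two Hamiltonians and specializing the generic FBSDE~\eqref{eq:FBSDE-MP}.

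First, I would write down the two Hamiltonians by combining the effort drift $\alpha^\pi$, the logistic CPR drift $rk(1-k/\cK)-qk(p\overline{x}^{\rmnc}+(1-p)\overline{x}^{\rmc})$, and the running cost $f_1-f_2$, mirroring the display used in the proof of Corollary~\ref{cor:fisher_fbsde_mi}. The first-order condition $\partial_\alpha H^{\mixpop,\pi}=Y^{2,\pi}+c_3^\pi\alpha=0$ immediately delivers the candidate controls $\hat\alpha^\pi_t=-Y^{2,\pi}_t/c_3^\pi$. Computing $\partial_x$ and $\partial_k$ of both Hamiltonians then produces the drift coefficients of $dY^{2,\pi}_t$ and $dY^{1,\pi}_t$ that appear in~\eqref{eq:fisher_mp_fbsde}, and accounts for every term except the two Lions-derivative contributions sitting inside the cooperative block.

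The delicate step, which I expect to be the main source of bookkeeping errors, is the Lions-derivative term $\tilde\EE[\partial_{\mu^\rmc}H^{\mixpop,\rmc}(\dots)(X^\rmc_t)]$ appearing in the $\rmc$-backward equation in~\eqref{eq:FBSDE-MP}. Since $f^\rmc$ and the CPR drift perceived by cooperative agents depend on $\mu^{\balpha^\rmc}$ only through its first moment $\overline{X}^\rmc_t$, I would invoke the elementary rule that the Lions derivative of $\mu\mapsto\int h\,d\mu$ at $x'$ equals $\partial h(x')$ to reduce the Lions derivative to an ordinary derivative with respect to the mean. Applied to the two $\overline{x}^\rmc$-dependent pieces of $H^{\mixpop,\rmc}$, namely the $-qk(1-p)\overline{x}^\rmc\,y^1$ contribution from the CPR drift and the $p_1qk^2(1-p)\overline{x}^\rmc\,x$ contribution from $-f_2$, this should produce exactly the extra drift terms $-qK_t(1-p)\overline{Y}^{1,\rmc}_t+p_1qK_t^2(1-p)\overline{X}^{\rmc}_t$ in $dY^{2,\rmc}_t$ once the tilded expectation is evaluated. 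The non-cooperative block receives no such correction because $f^\rmnc$ and the CPR drift perceived by $\rmnc$ agents are exogenous in both population means.

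Finally, I would invoke Remark~\ref{rem:common_K_MP} to collapse $\bK^\rmnc$ and $\bK^\rmc$ into a single equilibrium CPR process $\bK$, impose the fixed-point identity $\mu^\rmnc_t=\cL(X^\rmnc_t)$ together with $\mu^{\balpha^\rmc}_t=\cL(X^\rmc_t)$, substitute into the generic system of Theorem~\ref{thm:MP-Pontryagin}, and check that the resulting drifts and terminal conditions coincide line by line with~\eqref{eq:fisher_mp_fbsde}. The hard part is not analytical but combinatorial: keeping track of which block inherits the Lions-derivative terms and verifying the signs and prefactors that come out of the first-moment reduction.
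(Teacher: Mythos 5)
Your proposal is correct and follows essentially the same route as the paper's proof: specialize Theorem~\ref{thm:MP-Pontryagin} with the CPR absorbed as an extended state component, write the two Hamiltonians, obtain $\hat\alpha^{\pi}_t=-Y^{2,\pi}_t/c_3^{\pi}$ from the first-order condition, plug into the FBSDE~\eqref{eq:FBSDE-MP}, and invoke Remark~\ref{rem:common_K_MP} to identify $\bK^{\rmnc}$ with $\bK^{\rmc}$ at equilibrium. The only difference is that you spell out the first-moment reduction of the Lions derivative producing the terms $-qK_t(1-p)\overline{Y}^{1,\rmc}_t+p_1qK_t^2(1-p)\overline{X}^{\rmc}_t$, a computation the paper leaves implicit in ``plugging the related information,'' and your bookkeeping there is accurate.
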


\begin{proof}{Proof of Corollary~\ref{cor:fisher_fbsde_mp}}
    We utilize Theorem~\ref{thm:MP-Pontryagin} where the Hamiltonians for non-cooperative and cooperative representative fishers are written as:
    \begin{equation*}
    \begin{aligned}
        H^{\mixpop,\rmnc}(t, x, \alpha, \overline{x}, \overline{x}^{\prime}, k, y^{1, \rmnc}, y^{2, \rmnc})
        &= \big(rk(1-k/K)-qk(p\overline{x} + (1-p)\overline{x}^{\prime})\big) y^{1,\rmnc} +\alpha y^{2,\rmnc} 
        \\
        &\quad 
        +c_1x +(c_2/2) x^2 + (c_3/2) \alpha^2
        -\big(p_0 -p_1(qk(p\overline{x}+(1-p)\overline{x}^{\prime})\big)kx, 
        \end{aligned}
        \end{equation*}
    \begin{equation*}
    \begin{aligned}
        =H^{\mixpop,\rmc}(t, x, \alpha, \overline{x}, \overline{x}^{\prime}, k, y^{1,\rmc}, y^{2,\rmc})
         &= \big(rk(1-k/K)-qk(p\overline{x} + (1-p)\overline{x}^{\prime})\big) y^{1,\rmc} +\alpha y^{2,\rmc} 
        \\
        &\quad +c_1x +(c_2/2) x^2 + (c_3/2) \alpha^2
        -\big(p_0 -p_1(qk(p\overline{x}+(1-p)\overline{x}^{\prime})\big)kx.
    \end{aligned}
    \end{equation*}
Then, we have $\hat \alpha^{\rmnc}_t = \argmin_{\alpha} H^{\mixpop, \rmnc}=- y^{2,\rmnc}/c^{\rmnc}_3$ and $\hat \alpha^{\rmc}_t = \argmin_{\alpha} H^{\mixpop, \rmc}=- y^{2,\rmc}/c^{\rmc}_3$.
We conclude by plugging the related information in the FBSDE system given in Theorem~\ref{thm:MP-Pontryagin}. We emphasize that since at the MP-MFE $\overline{X}^{\rmc} = \overline{\bX}^{\balpha^{\rmc}}$, perceived CPR dynamics for non-cooperative and cooperative agents, $\bK^{\rmnc}$ and $\bK^{\rmc}$, will be the same. Therefore, we will have only one CPR dynamics where we denote with $\bK$.\hfill%

\end{proof}

\begin{theorem}
\label{the:existence_uniqueness_fisher_mp}
   Under small time condition, there exists a unique mixed population mean field equilibrium.
\end{theorem}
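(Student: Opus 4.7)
The plan is to mirror the two-step strategy used for Theorem~\ref{the:existence_uniqueness_fisher_mi}, adapted to the coupled system~\eqref{eq:fisher_mp_fbsde} which now involves two populations interacting through the mean effort flows $\overline{\bX}^{\rmnc}$ and $\overline{\bX}^{\rmc}$ together with the common deterministic CPR $\bK$. First, take expectations throughout~\eqref{eq:fisher_mp_fbsde} to obtain a closed coupled FBODE system for the mean processes and apply a Banach fixed point argument on a small time interval. Second, use the resulting unique mean trajectories to reduce the McKean--Vlasov FBSDE to two decoupled linear FBSDEs (one per population) which can be solved via affine ansatze and scalar Riccati equations.

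\emph{Step 1 (mean processes).} Taking expectations in~\eqref{eq:fisher_mp_fbsde} eliminates the Brownian integrals and, since $\bK$ is already deterministic, yields a closed coupled FBODE system for the tuple $(\bK, \overline{\bX}^{\rmnc}, \overline{\bX}^{\rmc}, \overline{\bY}^{1,\rmnc}, \overline{\bY}^{2,\rmnc}, \overline{\bY}^{1,\rmc}, \overline{\bY}^{2,\rmc})$ with forward initial data and zero terminal conditions. I would define a map $\Phi$ on $C([0,T];\RR^2)$ acting on the pair $(\overline{\bX}^{\rmnc},\overline{\bX}^{\rmc})$: given an input candidate, integrate the logistic ODE for $\bK$ forward, then solve the (now linear) backward ODEs for $(\overline{\bY}^{1,\pi},\overline{\bY}^{2,\pi})$ for $\pi \in \{\rmnc,\rmc\}$, and finally recover a new output via $\hat{\alpha}^\pi_t = -\overline{Y}^{2,\pi}_t/c^\pi_3$ integrated back into the forward effort ODEs. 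Boundedness of $\bK$ in $[0,\cK]$ (a consequence of the logistic structure) keeps all linear coefficients bounded, so standard Gr\"onwall estimates yield a Lipschitz constant of order $T$; for $T$ small enough $\Phi$ is a contraction and the Banach fixed point theorem delivers existence and uniqueness of the mean processes. The main obstacle here is that, compared with the MI case, the cooperative backward equation carries the extra Lions-derivative contribution $-qK_t(1-p)\overline{Y}^{1,\rmc}_t + p_1 q K_t^2(1-p)\overline{X}^{\rmc}_t$ (absent in the non-cooperative equation), which couples $\overline{\bY}^{1,\rmc}$ into the drift of $\overline{\bY}^{2,\rmc}$ and hence into the cooperative output effort. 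One must therefore track the joint Lipschitz dependence of all seven scalar mean processes on the two input components simultaneously in a single Gr\"onwall loop. Global-in-time results can then be obtained by patching solutions over small intervals, as in~\cite[Section~4.1.2]{CarmonaDelarue_book_I}.

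\emph{Step 2 (pathwise processes).} With the unique mean trajectories frozen, \eqref{eq:fisher_mp_fbsde} decouples into two linear (no longer McKean--Vlasov) FBSDEs, one for $(\bX^\rmnc,\bY^{1,\rmnc},\bY^{2,\rmnc},\bZ^{1,\rmnc},\bZ^{2,\rmnc})$ driven by $\bW^\rmnc$ and one for the cooperative analogue driven by $\bW^\rmc$, with coefficients depending only on the now-known mean trajectories and on $\bK$. I would propose affine ansatze of the form $Y^{1,\pi}_t = \eta^1_\pi(t) K_t + \zeta^1_\pi(t)$ and $Y^{2,\pi}_t = \eta^2_\pi(t) X^\pi_t + \zeta^2_\pi(t)$ for $\pi \in \{\rmnc,\rmc\}$, substitute into the corresponding FBSDEs, and identify coefficients to produce scalar Riccati equations for $\eta^j_\pi$ together with linear ODEs for $\zeta^j_\pi$. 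Classical results on scalar Riccati and linear ODEs on $[0,T]$ then give existence and uniqueness of the ansatz coefficients, from which uniqueness of the full FBSDE solution (and hence of the MP-MFE) follows by the standard linear FBSDE argument.
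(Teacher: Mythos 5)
Your Step 1 is the same argument as the paper's: take expectations of~\eqref{eq:fisher_mp_fbsde} to get a coupled FBODE for the seven mean processes, set up the composite map on the pair $(\overline{\bX}^{\rmnc},\overline{\bX}^{\rmc})$, and run a Gr\"onwall/contraction estimate that tracks the extra cooperative coupling through $\overline{\bY}^{1,\rmc}$; that part is fine. In Step 2, your observation that the frozen-mean system splits into two independent linear FBSDEs (one per Brownian motion) is also consistent with the paper, which keeps the system in vector form but ends up with diagonal matrices $M^1_t,\dots,M^4_t$, so its matrix Riccati equation is equivalent to your two scalar ones. Your ansatz $Y^{2,\pi}_t=\eta^2_\pi(t)X^\pi_t+\zeta^2_\pi(t)$ is exactly the paper's $Y^2_t=A_tX_t+B_t$.

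However, your ansatz $Y^{1,\pi}_t=\eta^1_\pi(t)K_t+\zeta^1_\pi(t)$ is a genuine error: since $K_t$ is deterministic, this posits that $Y^{1,\pi}$ is a deterministic function of time, whereas the BSDE for $Y^{1,\pi}$ in~\eqref{eq:fisher_mp_fbsde} has a driver containing the terms $-2p_1qK_t(p\overline{X}^{\rmnc}_t+(1-p)\overline{X}^{\rmc}_t)X^\pi_t+p_0X^\pi_t$, which depend on the random state $X^\pi_t$. Substituting your ansatz gives a finite-variation, deterministic left-hand side, and the coefficient matching fails because there is nothing to absorb the $X^\pi_t$ terms (equivalently, $Y^{1,\pi}_t=\EE[\int_t^T(\cdots)ds\mid\cF_t]$ genuinely depends on $X^\pi_t$, so $Z^{1,\pi}\neq 0$). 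The correct ansatz, used in the paper for both the MI and MP models, is affine in the state, $Y^{1,\pi}_t=C^\pi_tX^\pi_t+D^\pi_t$; since $Y^{1,\pi}$ does not feed back into the forward dynamics, this yields linear ODEs (not Riccati equations) for $C^\pi$ and $D^\pi$, to be solved after the Riccati equation for $\eta^2_\pi$ and the forward SDE for $X^\pi$. With that correction your argument coincides with the paper's.
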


\begin{proof}{Proof Sketch of Theorem~\ref{the:existence_uniqueness_fisher_mp}}
    For the sake of brevity, we give the sketch of the proof, the details can be found in Appendix~\ref{app:fisher_mp_proofs}. Following Corollary~\ref{cor:fisher_fbsde_mp}, the existence uniqueness analysis of MP-MFE corresponds to the existence-uniqueness analysis of the FBSDE in~\eqref{eq:fisher_mp_fbsde}. The proof consists of two steps:

    \noindent\textbf{Step 1:} We show the existence and uniqueness of the mean processes $(\overline{\hat{\balpha}}^{\rmnc}, \overline{\hat{\balpha}}^{\rmc}, \boldsymbol{K},$ $ \overline{\bX}^{\rmnc}, \overline{\bX}^{\rmc}, \overline{\bY}^{1, \rmnc}, \overline{\bY}^{1, \rmc}, \overline{\bY}^{2,\rmnc},$ $\overline{\bY}^{2,\rmc})$ by using Banach fixed point theorem. For this reason, we take the expectation of the FBSDE system in~\eqref{eq:fisher_mp_fbsde} and find a forward backward ordinary differential equation (FBODE) system for the tuple. We insist that the expected CPR dynamics in the FBODE system will be the same with the CPR dynamics in the FBSDE system since they do not have randomness. We treat the FBODE as a mapping from $(\overline{\bX}^{\rmnc}, \overline{\bX}^{\rmc})$ to $(\check{\overline{\bX}}^{\rmnc}, \check{\overline{\bX}}^{\rmc})$, and show that it is a contraction mapping under the small time condition to apply the Banach fixed point theorem. As mentioned before, global-in-time solutions can be obtained by patching together solutions over a small time interval, which can be done under suitable conditions.
    
    \noindent\textbf{Step 2:} Given unique mean processes $(\overline{\hat{\balpha}}^{\rmnc}, \overline{\hat{\balpha}}^{\rmc}, \boldsymbol{K}, \overline{\bX}^{\rmnc}, \overline{\bX}^{\rmc}, \overline{\bY}^{1, \rmnc}, \overline{\bY}^{1, \rmc}, \overline{\bY}^{2,\rmnc}, \overline{\bY}^{2,\rmc})$ the FBSDE in~\eqref{eq:fisher_mp_fbsde} becomes linear. Therefore, we propose linear ansatz for the adjoint vector processes $\bY^1= [\bY^{1,\rmnc}, \bY^{1,\rmc}]^{\top}$ and $\bY^2= [\bY^{2,\rmnc}, \bY^{2,\rmc}]^{\top}$. Then, we conclude by using the existence and uniqueness results for the linear ODE systems and matrix Riccati differential equations that results from the proposition of linear ansatz for the adjoint processes. 
    \hfill%
\end{proof}

\section{Numerical results}
\label{sec:numerics}

In this section, we will analyze the effect of the mixed mean field model parameters $\lambda$ and $p$ on the results. In order to accomplish, this we will numerically solve the FBODE systems that result in by taking the expectation of the FBSDEs that are given in~\eqref{eq:fisher_mi_fbsde} and~\eqref{eq:fisher_mp_fbsde}, respectively for mixed individual and mixed population mean field models. In this way, following Corollary~\ref{cor:fisher_fbsde_mi}, we can compute the MI-MFNE individual and CPR state flows $\overline{\bX}, \bK$, and average control $\overline{\hat\balpha}=(\overline{\hat\alpha}_t)_{t\in[0,T]}$ where $\overline{\hat\alpha}_t=-\overline{Y}_t^2/c_3$. Similarly, following Corollary~\ref{cor:fisher_fbsde_mp}, we can compute the MP-MFE individual (both for non-cooperative and cooperative agents) and CPR state flows, $\overline{\bX}^{\rmnc}, \overline{\bX}^{\rmc}, \bK$, and average controls $\overline{\hat\balpha}^{\rmnc}=(\overline{\hat\alpha}^{\rmnc}_t)_{t\in[0,T]}$, $\overline{\hat\balpha}^{\rmc}=(\overline{\hat\alpha}^{\rmc}_t)_{t\in[0,T]}$ where $\overline{\hat\alpha}^{\rmnc}_t=-\overline{Y}_t^{2,\rmnc}/c^{\rmnc}_3$ and $\overline{\hat\alpha}^{\rmc}_t=-\overline{Y}_t^{2,\rmc}/c^{\rmc}_3$. For solving the FBODE systems, we will use fixed point algorithm which is guaranteed to converge when $T$ is chosen to be small due to Theorems~\ref{the:existence_uniqueness_fisher_mi} and~\ref{the:existence_uniqueness_fisher_mp}. In order to extend the terminal time, we will use fictitious play approach. For the experiments, we use the following parameters $\cK=1$, $\rho=0.7$, $r=0.75$, $q=1.0$, $c_1=c_1^{\rmnc}=c_1^{\rmc}=0.5$, $c_2=c_2^{\rmnc}=c_2^{\rmc}=0.1$, $c_3=c_3^{\rmnc}=c_3^{\rmc}=1.0$, $\overline{X}_0=\overline{X}_0^{\rmnc}, \overline{X}_0^{\rmc}=1$, $p_1=1, p_0=2$, and $T=4$. To decide what defines the management concern related to possible the tragedy of the commons in the fisheries model, we use the outlines given in the report of~\cite{wwf_fish}. We set the $\cK=1$ as the \textit{unfished biomass} that is denoted by $\text{SSB}_0$ where SSB stands for \textit{Spawning Stock Biomass} of the fish species of interest in the oceans. WWF states that a healthy SSB is around $40\%$ of $\text{SSB}_0$, i.e. and below $20\%$ is called unsustainable levels or critical threshold. Then, levels around $25-35\%$ are the management concern levels. Therefore, we set $30\%$ as our \textit{management concern} level that we aim to stay above to prevent the tragedy of the commons. The results of regular MFG and MFC models can be seen in~\ref{fig:mfgVSmfc}. In this plot, we see that the CPR level goes below the management concern level in the MFG scenario; on the other hand, it stays sustainable around 0.5 for the MFC scenario. Now, we will further analyze the effect of parameters in the mixed MFG models. 

In Figure~\ref{fig:mi}, we can see the results of mixed individual mean field problems with different altruism levels, $(1-\lambda)$. This can be thought as each person is $100(1-\lambda)\%$ altruistic. We can see on the left subplot that when the individual altruism increases (i.e., $\lambda$ decreases), the CPR levels over time move from MFG setting to the MFC setting monotonously. This says that if the fishers are more altruistic, sustainability of the CPR will be more likely. We also see that when the people are individually altruistic (or in other words, if they see their individual effect on the system more clearly), they show more concern about the sustainability of the CPR and has lower fishing effort levels. We can see that the CPR levels stay above the management concern line ($K=0.3$) even when the altruism level ($1-\lambda$) is equal to $0.3$.

In Figure~\ref{fig:mp}, we can see the results of mixed population mean field problems with different levels of non-cooperative fisher proportion in the population, $p$. We can see on the left subplot that when the proportion of non-cooperative fishers in the population decreases, the CPR levels over time move from MFG setting to MFC setting monotonously. This says that when the proportion of non-cooperative fishers in the population decreases, CPR levels stay at more sustainable levels due to the higher population of fishers who are more concerned about the sustainability of CPR. We can also see respectively in the middle and the right subplots the average control (i.e., the fishing effort change) and average states (i.e., the fishing effort) for non-cooperative and cooperative fishers for a given $p$ level. We can see that at the same level of $p$ fishing effort of non-cooperative fishers are much higher then cooperative fishers. For example when 30\% of the population is non-cooperative ($p=0.3)$, we can see that non-cooperative fishers increase their fishing effort and in response to this to protect the sustainability of fish stock cooperative fishers further decrease their fishing efforts. This shows that in the mixed population mean field model, non-cooperative fishers enjoy \textit{free rider} phenomenon.

Finally, in Figure~\ref{fig:miVSmp_v2}, we compare the results of mixed individual (MI) and mixed population (MP) mean field models at the same levels of model parameters, $\lambda$ and $p$. This would be comparing the results of having each individual a specific percentage level of altruistic or having a population with the same specific proportion of fully cooperative fishers. When we analyze the left subplot (and similarly the other plots), we can see that at $\lambda=p=0$ and $\lambda=p=1$, the CPR dynamics are corresponding to each other for MI and MP model, this is because the former one corresponds to the MFC setting in both cases while the latter one corresponds to the MFG setting. However, when a $\lambda$ and $p$ between 0 and 1 are chosen, we can see that the CPR levels are lower for the MP model at the same $\lambda$ and $p$ levels. For example, for the mixed individual model, the Fish Stock (i.e., CPR level) is below the management concern level (shown with the dotted horizontal line) only for the case where $\lambda=1$, i.e., individuals are fully non-altruistic. The other cases with $\lambda=0, 0.3, 0.5, 0.7$, we can see that CPR levels stay above the management concern line. However, for the mixed population model, the CPR level is below the management concern level for the populations with $p=0.5, 0.7, 1.0$, which are the populations that consists of $50\%$, $70\%$, and $100\%$ non-cooperative fishers, respectively. This gives us a very important insight. \textit{It is better to have everyone partially altruistic instead of some proportion of the population fully altruistic (i.e., cooperative) while the others are non-altruistic, due to the free rider phenomenon that is observed in mixed populations.}

Supporting this observation, in the middle and the right subplots of Figure~\ref{fig:miVSmp_v2}, we can see respectively the average control (i.e., the fishing effort change) and average states (i.e., the fishing effort) in the populations for the specific model parameters. For the mixed population mean field model, the average state in the mixed population is calculated by $\overline{X}_t=p\overline{X}_t^{\rmnc} +(1-p)\overline{X}_t^{\rmc}$  where $\overline{\alpha}_t$ is calculated similarly. We can observe that at the same level of $\lambda$ and $p$ parameters, the average fishing effort in the mixed population mean field model is higher than the corresponding mixed individual mean field model due to the free rider behavior of the non-cooperative fishers in the mixed population model.

\begin{figure*}%
\centering
        \includegraphics[width=1\linewidth]{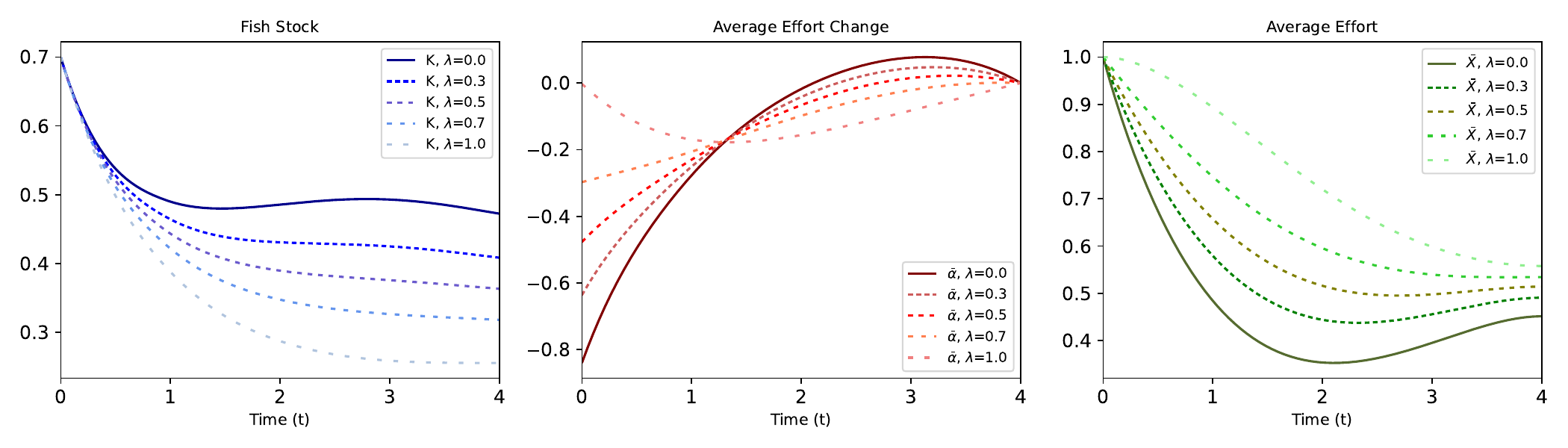}
    \caption{Mixed Individual MFG results under different model parameter, $\lambda$, choices. Left: CPR levels over time; Middle: Average control (i.e., average fishing effort change) in the population over time; Right: Average state (i.e., average fishing effort) over time.}
    \label{fig:mi}
\end{figure*}

\begin{figure*}%
\centering
        \includegraphics[width=1\linewidth]{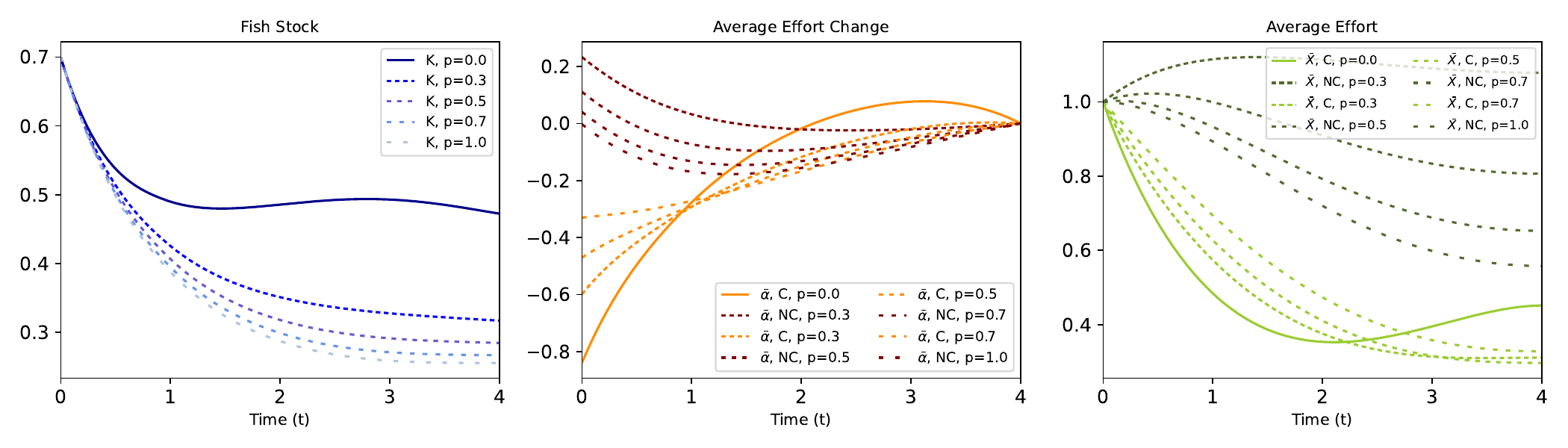}
\caption{Mixed Population MFG results under different model parameter, $p$, choices. Left: CPR levels over time; Middle: Average control (i.e., average fishing effort change) for noncooperative (NC) and cooperative (C) fishers over time; Right: Average state (i.e., average fishing effort) for noncooperative (NC) and cooperative (C) fishers over time. Since $p=0$ and $p=1$ respectively correspond to fully cooperative and fully competitive populations, we only added the related lines in the middle and right subplots.}
    \label{fig:mp}
\end{figure*}

\begin{figure*}%
\centering
        \includegraphics[width=1\linewidth]{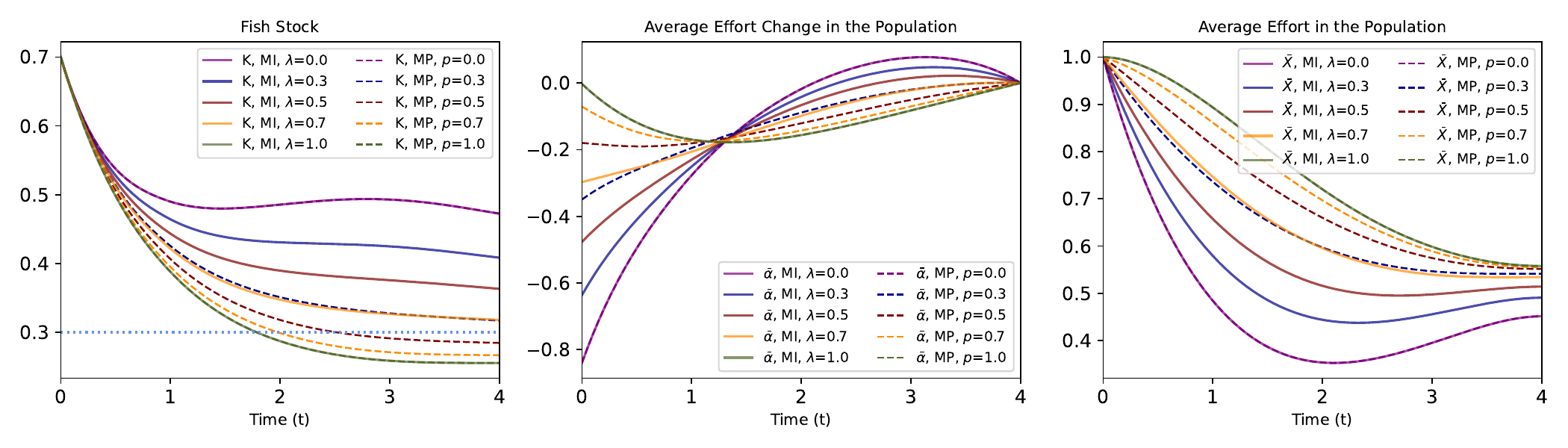}
        \caption{Mixed Individual MFG vs. Mixed Population MFG results under different (but the same level of) model parameters. Left: CPR levels over time; Middle: Average control (i.e., average fishing effort change) in the fisher population over time; Right: Average state (i.e., average fishing effort) in the fisher population over time.}
    \label{fig:miVSmp_v2}
\end{figure*}

\section{Conclusion}

In this paper, we extend the classical MFG framework to better capture the complex behaviors observed in the models where common pool resources are involved, where pure self-interest of individuals often fails to explain empirical observations. By introducing two new classes of models, namely \textit{Mixed Individual MFGs} and \textit{Mixed Population MFGs}, we incorporate non-cooperative and altruistic tendencies at the agent and population levels, respectively. These models allow us to interpolate between purely competitive and purely cooperative dynamics, providing a richer and more realistic description of agent behaviors capturing the realistic behaviors observed in the scenarios where CPRs are involved.

Our theoretical contributions include the derivation of optimality conditions via FBSDEs for both classes of models and proofs that the proposed mean field models provide approximate equilibria for corresponding finite-agent games. Furthermore, we demonstrate how these models can be extended to include common pool resource dynamics, to capture applications such as fisheries management. For our motivating example of fishery modeling, we give the FBSDE systems characterizing the results and we prove existence and uniqueness of solutions. We illustrate the effects of the levels of altruism on the CPR sustainability in our numerical simulations. 

The results show that introducing cooperative behaviors even in small proportions can significantly mitigate the tragedy of the commons. Furthermore, we conclude that the populations with every agent is partially altruistic perform better than the populations where some proportion of agents are fully altruistic and the remaining is competitive due to the \textit{free rider} phenomenon in the latter case. These findings not only align with empirical observations in real-world CPR systems but also suggest new avenues for designing decentralized mechanisms to promote sustainable behavior.

Future research directions include the extension of these frameworks to more general dynamics to include a common randomness for the common pool resources. We are also interested in analyzing mechanism design questions by adding a regulator to see the effects of regulations. Finally, we plan to model different applications with CPRs such as air quality and underground water sustainability.

\onecolumn
\bibliographystyle{informs2014} %

\begin{thebibliography}{33}
	\providecommand{\natexlab}[1]{#1}
	\providecommand{\url}[1]{\texttt{#1}}
	\providecommand{\urlprefix}{URL }
	
	\bibitem[{Abou-Kandil et~al.(2003)Abou-Kandil, Freiling, Ionescu, \protect\BIBand{} Jank}]{Abou-Kandil_Freiling_Ionescu_Jank_2003}
	Abou-Kandil H, Freiling G, Ionescu V, Jank G (2003) \emph{Matrix Riccati Equations in Control and Systems Theory}. Systems \& Control: Foundations \& Applications (Basel: Birkhäuser), ISBN 978-3-0348-9432-6.
	
	\bibitem[{Angiuli et~al.(2022)Angiuli, Detering, Fouque, Lauri{\`e}re, \protect\BIBand{} Lin}]{angiuli2022reinforcement}
	Angiuli A, Detering N, Fouque JP, Lauri{\`e}re M, Lin J (2022) Reinforcement learning for intra-and-inter-bank borrowing and lending mean field control game. \emph{Proceedings of the Third ACM International Conference on AI in Finance}, 369--376.
	
	\bibitem[{Angiuli et~al.(2023)Angiuli, Detering, Fouque, Lin et~al.}]{angiuli2023reinforcementmixedmfcg}
	Angiuli A, Detering N, Fouque JP, Lin J, et~al. (2023) Reinforcement learning algorithm for mixed mean field control games. \emph{Journal of Machine Learning} 2(2).
	
	\bibitem[{Aurell et~al.(2022)Aurell, Carmona, Dayan{\i}kl{\i}, \protect\BIBand{} Lauri\`ere}]{aurell2022optimal}
	Aurell A, Carmona R, Dayan{\i}kl{\i} G, Lauri\`ere M (2022) Optimal incentives to mitigate epidemics: a stackelberg mean field game approach. \emph{SIAM Journal on Control and Optimization} 60(2):S294--S322.
	
	\bibitem[{Bensoussan(2022)}]{bensoussan2022new}
	Bensoussan A (2022) New mathematical problems in {M}anagement {S}cience. \emph{Annual Reviews in Control} 53:1--5.
	
	\bibitem[{Bensoussan et~al.(2013)Bensoussan, Frehse, Yam et~al.}]{bensoussan2013mean}
	Bensoussan A, Frehse J, Yam P, et~al. (2013) \emph{Mean field games and mean field type control theory}, volume 101 (Springer).
	
	\bibitem[{Brochier et~al.(2018)Brochier, Auger, Thiao, Bah, Ly, Nguyen-Huu, \protect\BIBand{} Brehmer}]{brochier2018can}
	Brochier T, Auger P, Thiao D, Bah A, Ly S, Nguyen-Huu T, Brehmer P (2018) Can overexploited fisheries recover by self-organization? {R}eallocation of fishing effort as an emergent form of governance. \emph{Marine Policy} 95:46--56.
	
	\bibitem[{Cardaliaguet et~al.(2019)Cardaliaguet, Delarue, Lasry, \protect\BIBand{} Lions}]{CardaliaguetDelarueLasryLions}
	Cardaliaguet P, Delarue F, Lasry JM, Lions PL (2019) \emph{The master equation and the convergence problem in mean field games}, volume 201 of \emph{Annals of Mathematics Studies} (Princeton University Press, Princeton, NJ), ISBN 978-0-691-19071-6; 978-0-691-19070-9.
	
	\bibitem[{Carmona et~al.(2023)Carmona, Dayanikli, Delarue, \protect\BIBand{} Lauriere}]{carmona2023nash}
	Carmona R, Dayanikli G, Delarue F, Lauriere M (2023) From {N}ash equilibrium to social optimum and vice versa: a mean field perspective. \emph{arXiv preprint arXiv:2312.10526} .
	
	\bibitem[{Carmona et~al.(2022)Carmona, Dayan{\i}kl{\i}, \protect\BIBand{} Lauri{\`e}re}]{carbon_StackelbergMFG}
	Carmona R, Dayan{\i}kl{\i} G, Lauri{\`e}re M (2022) Mean field models to regulate carbon emissions in electricity production. \emph{Dynamic Games and Applications} 12(3):897--928.
	
	\bibitem[{Carmona \protect\BIBand{} Delarue(2018)}]{CarmonaDelarue_book_I}
	Carmona R, Delarue F (2018) \emph{Probabilistic theory of mean field games with applications. {I}}, volume~83 of \emph{Probability Theory and Stochastic Modelling} (Cham: Springer), ISBN 978-3-319-56437-1; 978-3-319-58920-6, mean field FBSDEs, control, and games.
	
	\bibitem[{Carmona \protect\BIBand{} Wang(2021)}]{carmona2021finite}
	Carmona R, Wang P (2021) Finite-state contract theory with a principal and a field of agents. \emph{Management Science} 67(8):4725--4741.
	
	\bibitem[{Dayanikli \protect\BIBand{} Lauriere(2024)}]{dayanikli2024can}
	Dayanikli G, Lauriere M (2024) How can the tragedy of the commons be prevented?: Introducing linear quadratic mixed mean field games. \emph{CDC 2024 (arXiv:2409.08235)} .
	
	\bibitem[{Elie et~al.(2020)Elie, Hubert, \protect\BIBand{} Turinici}]{elie2020contact}
	Elie R, Hubert E, Turinici G (2020) Contact rate epidemic control of covid-19: an equilibrium view. \emph{Mathematical Modelling of Natural Phenomena} 15:35.
	
	\bibitem[{Elie et~al.(2019)Elie, Mastrolia, \protect\BIBand{} Possama{\"\i}}]{elie2019tale}
	Elie R, Mastrolia T, Possama{\"\i} D (2019) A tale of a principal and many, many agents. \emph{Mathematics of Operations Research} 44(2):440--467.
	
	\bibitem[{Fournier \protect\BIBand{} Guillin(2015)}]{fournier2015rate}
	Fournier N, Guillin A (2015) On the rate of convergence in wasserstein distance of the empirical measure. \emph{Probability theory and related fields} 162(3):707--738.
	
	\bibitem[{Hardin(1968)}]{hardin1968tragedy}
	Hardin G (1968) The tragedy of the commons. \emph{Science} 162(3859):1243--1248.
	
	\bibitem[{Huang et~al.(2006)Huang, Malham{\'e}, \protect\BIBand{} Caines}]{huang2006large}
	Huang M, Malham{\'e} RP, Caines PE (2006) Large population stochastic dynamic games: closed-loop {M}c{K}ean-{V}lasov systems and the {N}ash certainty equivalence principle. \emph{Commun. Inf. Syst.} 6(1):221--252.
	
	\bibitem[{Iyer et~al.(2014)Iyer, Johari, \protect\BIBand{} Sundararajan}]{iyer2014mean}
	Iyer K, Johari R, Sundararajan M (2014) Mean field equilibria of dynamic auctions with learning. \emph{Management Science} 60(12):2949--2970.
	
	\bibitem[{Kobeissi et~al.(2024{\natexlab{a}})Kobeissi, Mazari-Fouquer, \protect\BIBand{} Ruiz-Balet}]{kobeissi2024mean}
	Kobeissi Z, Mazari-Fouquer I, Ruiz-Balet D (2024{\natexlab{a}}) Mean-field games for harvesting problems: Uniqueness, long-time behaviour and weak kam theory. \emph{arXiv preprint arXiv:2406.06057} .
	
	\bibitem[{Kobeissi et~al.(2024{\natexlab{b}})Kobeissi, Mazari-Fouquer, \protect\BIBand{} Ruiz-Balet}]{kobeissi2024tragedy}
	Kobeissi Z, Mazari-Fouquer I, Ruiz-Balet D (2024{\natexlab{b}}) The tragedy of the commons: A mean-field game approach to the reversal of travelling waves. \emph{Nonlinearity} 37(11):115010.
	
	\bibitem[{Ku{\v c}era(2009)}]{kucera_2009}
	Ku{\v c}era V (2009) Riccati equations and its solutions. Levine WS, ed., \emph{Control System Advanced Methods}, chapter~14 (CRC Press, Inc), 2nd edition.
	
	\bibitem[{Lasry \protect\BIBand{} Lions(2007)}]{lasry2007mean}
	Lasry JM, Lions PL (2007) Mean field games. \emph{Japanese journal of mathematics} 2(1):229--260.
	
	\bibitem[{Li et~al.(2022)Li, Li, \protect\BIBand{} Wu}]{li2022dynamic}
	Li M, Li N, Wu Z (2022) Dynamic optimization problems for mean-field stochastic large-population systems. \emph{ESAIM: Control, Optimisation and Calculus of Variations} 28:49.
	
	\bibitem[{Li et~al.(2024)Li, Reppen, \protect\BIBand{} Sircar}]{li2024mean}
	Li Z, Reppen AM, Sircar R (2024) A mean field games model for cryptocurrency mining. \emph{Management Science} 70(4):2188--2208.
	
	\bibitem[{McGahan \protect\BIBand{} Pongeluppe(2023)}]{mcgahan2023there}
	McGahan AM, Pongeluppe LS (2023) There is no planet {B}: Aligning stakeholder interests to preserve the {A}mazon rainforest. \emph{Management Science} 69(12):7860--7881.
	
	\bibitem[{McPike(2021)}]{McPike_mfg_fish}
	McPike R (2021) \emph{Integrating ecology and economics in the mathematical modelling of marine ecosystems}. Ph.D. thesis, University of Strathclyde, Glasgow.
	
	\bibitem[{Moxnes(1998)}]{moxnes1998not}
	Moxnes E (1998) Not only the tragedy of the commons: misperceptions of bioeconomics. \emph{Management science} 44(9):1234--1248.
	
	\bibitem[{Ostrom(1992)}]{ostrom1992governing}
	Ostrom E (1992) Governing the commons: The evolution of institutions for collective action. \emph{Natural Resources Journal} 32(2):415.
	
	\bibitem[{Ostrom(1999)}]{ostrom1999coping}
	Ostrom E (1999) Coping with tragedies of the commons. \emph{Annual review of political science} 2(1):493--535.
	
	\bibitem[{Shrivats et~al.(2022)Shrivats, Firoozi, \protect\BIBand{} Jaimungal}]{shrivats_mfin}
	Shrivats AV, Firoozi D, Jaimungal S (2022) A mean-field game approach to equilibrium pricing in solar renewable energy certificate markets. \emph{Mathematical Finance} 32(3):779--824.
	
	\bibitem[{World Wildlife~Fund(2021)}]{wwf_fish}
	World Wildlife~Fund I (2021) Resilient tuna populations for human benefit and ocean health. Technical report, \urlprefix\url{https://wwfint.awsassets.panda.org/downloads/tuna_strategy_2021_web.pdf}.
	
	\bibitem[{Yoshioka et~al.(2024)Yoshioka, Tsujimura, \protect\BIBand{} Yoshioka}]{yoshioka2024numerical}
	Yoshioka H, Tsujimura M, Yoshioka Y (2024) Numerical analysis of an extended mean field game for harvesting common fishery resource. \emph{Computers \& Mathematics with Applications} 165:88--105.
	
\end{thebibliography}

\clearpage 

\onecolumn

\appendix 

\section{Proofs for MI mean field model}\label{app:MI-proofs}

\subsection{Approximate equilibrium}
\label{app:MI-eps-Nash}
We start by stating the assumptions we will use in the proof. We suppose that the following conditions hold:
\begin{itemize}
    \item The drift is of the form: $b(t,x,a,\mu,\mu') = a$
    \item The running cost is of the form: $f(t,x,a,\mu,\mu') = \frac{1}{2}|a|^2 + F(x,\mu,\mu')$
	\item The map $F$ is $L_F$-Lipschitz continuous with respect to $(x,\mu)$ in the following sense: for any $(x,\mu,\mu')$ and $(\tilde x,\tilde \mu,\mu')$
$$
	|F(x,\mu,\mu') - F(\tilde x,\tilde \mu,\mu')| \leq L_F [|x-\tilde x| + W_1(\mu,\tilde \mu)]
$$
where $W_1$ is the $1$-Wasserstein distance. Likewise, the terminal cost $g$ is Lipschitz continuous in $(x,\mu)$ with Lipschitz constant $L_g$. %
	\item The maps $F$ and $g$ are bounded: $C_{F} := \|F\|_\infty$ and $ C_g := \|g\|_\infty $ are finite.
	\item The initial distribution has a finite second moment: $\EE|X_0|^2<+\infty$.
\end{itemize}

\begin{proof}{Proof of Theorem~\ref{thm:MI-eps-Nash}}

We denote by $\hat{\bualpha} = (\hat\bualpha^j)_{j=1}^N$ the control profile where $\hat\alpha^j_t = \hat\alpha(t,X^{j,\hat\bualpha}_t)$. We want to show that: for every $i=1,\dots,N$ and every open-loop control $\balpha^i$,
\begin{equation}
    \label{eq:MI-eps-Nash-ineq-eps}
    J^{\mixind,N}(\hat{\bualpha}) \le J^{\mixind,N}(\balpha^i; \hat{\bualpha}^{-i}) + \varepsilon,
\end{equation}
for some $\varepsilon>0$ to be chosen below. In the proof, we introduce $\epsilon \in (0,1)$ and the value of $\varepsilon$ will be equal to $\sqrt\epsilon$, up to a multiplicative constant independent of $N$.

\noindent{\bf Step 1: Bound on the controls. }
Note that, by optimality of $\hat\balpha$, 
\[
    J^{\mixind}(\hat\balpha; \mu^{\hat{\balpha}}) \le J^{\mixind}(0; \mu^{\hat{\balpha}}),
\]
where $0$ denotes the control which is identically $0$ for each time $t$. The left-hand side is bounded from below as:
\[
	\EE\int_0^T \frac{1}{2}|\hat\alpha(t,X^{\hat\balpha}_t)|^2dt - (TC_{F} + C_{g}) 
	\le
	J^{\mixind}(\hat\balpha; \mu^{\hat{\balpha}})
\]
while the right-hand side is bounded from above by $(TC_{F} + C_{g})$. 
As a consequence,
\begin{equation}
\label{eq:MI-eps-Nash-Camax}
    \EE\int_0^T \frac{1}{2}|\hat\alpha(t,X^{\hat\balpha}_t)|^2dt
    \le 2 (TC_{F} + C_{g}) =: C_{a,max}. 
\end{equation}

Without loss of generality, we can assume that 
\begin{equation}
    \label{eq:MI-bound-alphai-kappa}
    \EE \int_0^T \frac{1}{2}|\alpha^i_t|^2 dt 
    \le \epsilon +  (TC_{F} + C_{g}) + C_{a,max} =: \kappa.
\end{equation}
Indeed, if the above inequality is not true, then $J^{\mixind,N}(\balpha^i; \hat{\bualpha}^{-i})$ is large enough that~\eqref{eq:MI-eps-Nash-ineq-eps} holds, which concludes the proof immediately.

\noindent{\bf Step 2: From the finite population cost to the mean field cost. }
We have:
\begin{align*}
    &J^{\mixind,N}(\balpha^i; \hat{\bualpha}^{-i})
    \\
    &= \EE \Big[\int_0^T f(t, X_t^{i,\balpha^i,\hat{\bualpha}^{-i}}, \alpha^i_t, \mu^{N,\balpha^i,\hat{\bualpha}^{-i}}_t, \mu_t^{i, \balpha^i,\hat{\bualpha}^{-i}})dt 
    \\
    &\qquad + g(X_T^{i,\balpha^i,\hat{\bualpha}^{-i}}, \mu^{N,\balpha^i,\hat{\bualpha}^{-i}}_T, \mu_T^{i, \balpha^i,\hat{\bualpha}^{-i}})\Big].
\end{align*}
Note that, since the drift is only the individual control, the process $\bX^{i,\balpha^i,\hat{\bualpha}^{-i}}$ actually does not depend on the control of other agents. So we can write $\bX^{i,\balpha^i}$. Similarly, instead of $\mu_t^{i, \balpha^i,\hat{\bualpha}^{-i}}$, we can write $\mu_t^{\balpha^i}$. 

For the final cost, we split it as:
\begin{align*}
    &\EE\left[g(X_T^{i,\balpha^i}, \mu^{N,\balpha^i,\hat{\bualpha}^{-i}}_T, \mu_T^{i, \balpha^i,\hat{\bualpha}^{-i}})\right]
    \\
    &= \EE\left[\indic_{\left\{\sup_{t \in [0,T]} |X_t^{i,\balpha^i}| > 1/\sqrt{\epsilon} \right\}} g(X_T^{i,\balpha^i}, \mu^{N,\balpha^i,\hat{\bualpha}^{-i}}_T, \mu_T^{\balpha^i})\right]
    \\
    &\qquad +
    \EE\left[\indic_{\left\{\sup_{t \in [0,T]} |X_t^{i,\balpha^i}| \le 1/\sqrt{\epsilon} \right\}} g(X_T^{i,\balpha^i}, \mu^{N,\balpha^i,\hat{\bualpha}^{-i}}_T, \mu_T^{\balpha^i})\right].
\end{align*}
In the right-hand side, the first term is bounded from below by:
\[
    - \|g\|_\infty \PP\left( \sup_{t \in [0,T]} |X_t^{i,\balpha^i}| > 1/\sqrt{\epsilon} \right)
    \ge - C_g K_1 \sqrt\epsilon,
\]
where the inequality holds by Lemma~\ref{lem MI bd Xi out} with $K_1$ depending only on $\EE|X_0|^2,T,\sigma$ and $\kappa$. 

The second term, thanks to Lemma~\ref{lem MI bd cost inside}, is bounded from below by:
\begin{align*}
    &\EE\left[\indic_{\left\{\sup_{t \in [0,T]} |X_t^{i,\balpha^i}| \le 1/\sqrt{\epsilon} \right\}} g(X_T^{i,\balpha^i}, \mu^{\hat{\balpha}}_T, \mu_T^{\balpha^i})\right] - \epsilon
    \\
    &= \EE\left[ g(X_T^{i,\balpha^i}, \mu^{\hat{\balpha}}_T, \mu_T^{\balpha^i})\right] 
    \\
    &\qquad - \EE\left[\indic_{\left\{\sup_{t \in [0,T]} |X_t^{i,\balpha^i}| > 1/\sqrt{\epsilon} \right\}} g(X_T^{i,\balpha^i}, \mu^{\hat{\balpha}}_T, \mu_T^{\balpha^i})\right] - \epsilon
    \\
    &\ge \EE\left[ g(X_T^{i,\balpha^i}, \mu^{\hat{\balpha}}_T, \mu_T^{\balpha^i})\right] - C_g K_1\sqrt\epsilon - \epsilon,
\end{align*}
where $\mu^{\hat{\balpha}}_t$ is the law of $X^{\hat\balpha}_t$, controlled by $t \mapsto \hat\alpha(t,X^{\hat\balpha}_t)$, and we used again Lemma~\ref{lem MI bd Xi out} for the inequality.

We proceed similarly for the running cost $F$ and we obtain:
\begin{align*}
    &\EE \left[F(X_t^{i,\balpha^i},  \mu^{N,\balpha^i,\hat{\bualpha}^{-i}}_t, \mu_t^{\balpha^i})  \right]\\
    &\ge 
    \EE \left[\indic_{\left\{\sup_{t \in [0,T]} |X_t^{i,\balpha^i}| > 1/\sqrt{\epsilon} \right\}} F(X_t^{i,\balpha^i}, \mu^{N,\balpha^i,\hat{\bualpha}^{-i}}_t, \mu_t^{\balpha^i}) \right]
    \\
    & + 
    \EE \left[\indic_{\left\{\sup_{t \in [0,T]} |X_t^{i,\balpha^i}| \le 1/\sqrt{\epsilon} \right\}} F(X_t^{i,\balpha^i}, \mu^{N,\balpha^i,\hat{\bualpha}^{-i}}_t, \mu_t^{\balpha^i}) \right]
    \\
    &\ge 
    -C_F K_1 \sqrt\epsilon 
    \\
    &\qquad + 
    \EE \left[ F(X_t^{i,\balpha^i}, \mu^{\hat\balpha}_t, \mu_t^{\balpha^i}) \right] - C_F K_1 \sqrt\epsilon - \epsilon.
\end{align*}
 
Collecting the terms, we obtain:
\begin{align}
    &J^{\mixind,N}(\balpha^i; \hat{\bualpha}^{-i})
    \notag
    \\
    &= \EE \Big[\int_0^T f(t, X_t^{i,\balpha^i}, \alpha^i_t, \mu^{N,\balpha^i,\hat{\bualpha}^{-i}}_t, \mu_t^{\balpha^i})dt 
    \notag
    \\
    &\qquad + g(X_T^{i,\balpha^i}, \mu^{N,\balpha^i,\hat{\bualpha}^{-i}}_T, \mu_T^{\balpha^i})\Big]
    \notag
    \\
    &= \EE \Big[\int_0^T \frac{1}{2} |\alpha^i_t|^2 dt + \int_0^T F(X_t^{i,\balpha^i}, \mu^{N,\balpha^i,\hat{\bualpha}^{-i}}_t, \mu_t^{\balpha^i})dt 
    \notag
    \\
    &\qquad + g(X_T^{i,\balpha^i}, \mu^{N,\balpha^i,\hat{\bualpha}^{-i}}_T, \mu_T^{\balpha^i})\Big]
    \notag
    \\
    & 
    \ge \EE \Big[\int_0^T \frac{1}{2} |\alpha^i_t|^2 dt + \int_0^T F(X_t^{i,\balpha^i}, \mu^{\hat\balpha}_t, \mu_t^{\balpha^i})dt 
    \notag
    \\
    &\qquad + g(X_T^{i,\balpha^i}, \mu^{\hat\balpha}_T, \mu_T^{\balpha^i})\Big]
    \notag
    \\
    &\qquad - T(2C_FK_1 + 1)\sqrt\epsilon - (2C_gK_1 + 1) \sqrt\epsilon
    \notag
    \\
    &= J^{\mixind}(\balpha^i; \mu^{\hat{\balpha}}) - [T(2C_FK_1 + 1) + 2C_gK_1 + 1] \sqrt\epsilon 
    \notag
    \\
    &\ge J^{\mixind}(\hat\balpha; \mu^{\hat{\balpha}}) - [T(2C_FK_1 + 1) + 2C_gK_1 + 1] \sqrt\epsilon  ,
    \label{eq:MI-epsilon-MF-to-FP}
\end{align}
where the last inequality is by the optimality of $\hat\balpha$  against $\bmu^{\hat{\balpha}}$ due to the mean field Nash equilibrium property.

\noindent{\bf Step 3: From the mean field cost to the the finite population cost. }
Now we want to show that  $J^{\mixind}(\hat\balpha; \mu^{\hat{\balpha}}) \ge J^{\mixind,N}(\hat\balpha; \hat{\balpha}^{-i}) - \dots$
We will prove an upper bound on $J^{\mixind,N}(\hat\balpha; \hat{\balpha}^{-i})$. 

For the final cost, we split it as:
\begin{align*}
    &\EE\left[g(X_T^{i,\hat\balpha}, \mu^{N,\hat\balpha,\hat{\bualpha}^{-i}}_T, \mu_T^{i, \hat\balpha,\hat{\bualpha}^{-i}})\right]
    \\
    &= \EE\left[\indic_{\left\{\sup_{t \in [0,T]} |X_t^{i,\hat\balpha}| > 1/\sqrt{\epsilon} \right\}} g(X_T^{i,\hat\balpha}, \mu^{N,\hat\balpha,\hat{\bualpha}^{-i}}_T, \mu_T^{\hat\balpha})\right]
    \\
    &\qquad +
    \EE\left[\indic_{\left\{\sup_{t \in [0,T]} |X_t^{i,\hat\balpha}| \le 1/\sqrt{\epsilon} \right\}} g(X_T^{i,\hat\balpha}, \mu^{N,\hat\balpha,\hat{\bualpha}^{-i}}_T, \mu_T^{\hat\balpha})\right].
\end{align*}
In the right-hand side, the first term is bounded from above by:
\[
    \|g\|_\infty \PP\left( \sup_{t \in [0,T]} |X_t^{i,\hat\balpha}| > 1/\sqrt{\epsilon} \right)
    \le C_g K_1 \sqrt{\epsilon},
\]
where the inequality holds by Lemma~\ref{lem MI bd Xi out} with $K_1$ depending only on $\EE|X_0|^2,T,\sigma$ and $\kappa$. 

The second term, thanks to Lemma~\ref{lem MI bd cost inside}, can be bounded from above by:
\begin{align*}
    &\EE\left[\indic_{\left\{\sup_{t \in [0,T]} |X_t^{i,\hat\balpha}| \le 1/\sqrt{\epsilon} \right\}} g(X_T^{i,\hat\balpha}, \mu^{\hat{\balpha}}_T, \mu_T^{\hat\balpha})\right] + \epsilon
    \\
    &= \EE\left[ g(X_T^{i,\hat\balpha}, \mu^{\hat{\balpha}}_T, \mu_T^{\hat\balpha})\right] 
    \\
    &\qquad - \EE\left[\indic_{\left\{\sup_{t \in [0,T]} |X_t^{i,\hat\balpha}| > 1/\sqrt{\epsilon} \right\}} g(X_T^{i,\hat\balpha}, \mu^{\hat{\balpha}}_T, \mu_T^{\hat\balpha})\right] + \epsilon
    \\
    &\le \EE\left[ g(X_T^{i,\hat\balpha}, \mu^{\hat{\balpha}}_T, \mu_T^{\hat\balpha})\right] + \|g\|_\infty\PP\left(\sup_{t \in [0,T]} |X_t^{i,\hat\balpha}| > 1/\sqrt{\epsilon} \right) + \epsilon
    \\
    &\le \EE\left[ g(X_T^{i,\hat\balpha}, \mu^{\hat{\balpha}}_T, \mu_T^{\hat\balpha})\right] + \|g\|_\infty K_1 \sqrt{\epsilon} + \epsilon,
\end{align*}
where $\mu^{\hat{\balpha}}_t$ is the law of $X^{\hat\balpha}_t$, controlled by $t \mapsto \hat\alpha(t,X^{\hat\balpha}_t)$, and we used again Lemma~\ref{lem MI bd Xi out} for the last inequality. We proceed similarly for the running cost $F$ (we omit the details for the sake of brevity). Collecting the terms, we obtain:
\begin{align}
\label{eq:MI-epsilon-FP-to-MF}
    J^{\mixind,N}(\hat\balpha; \hat{\bualpha}^{-i})
    &\le J^{\mixind}(\hat\balpha; \mu^{\hat{\balpha}}) + C \sqrt\epsilon,
\end{align}
with $C = T(C_FK_1 + 1) + C_gK_1 + 1$. 

\noindent{\bf Step 4: Conclusion. } Combining~\eqref{eq:MI-epsilon-MF-to-FP} and~\eqref{eq:MI-epsilon-FP-to-MF}, we deduce:
\[
    J^{\mixind,N}(\balpha^i; \hat{\bualpha}^{-i}) 
    \ge
    J^{\mixind,N}(\hat\balpha; \hat{\bualpha}^{-i})
    - C \sqrt\epsilon,
\]
with $C = [T(3C_FK_1 + 2) + 3C_gK_1 + 2]$. 

\end{proof}

\begin{lemma}[Bound on the state]
\label{lem MI bd Xi out}
    Suppose the Assumptions of Theorem~\ref{thm:MI-eps-Nash} hold and $\EE \int_0^T \frac{1}{2} |\alpha^i_t|^2 dt \le C_{a,max}$.
    There exists $K_1$ depending only on $\EE|X_0|^2$, $T$, $\sigma$ and $\kappa$ such that: for every $R>0$, 
    \[
    \PP\left( \sup_{t \in [0,T]} |X_t^{i,\balpha^i}| > R \right)
    \le K_1 \frac{1}{R}.
    \]
\end{lemma}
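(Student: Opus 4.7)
The plan is to control $\mathbb{E}\bigl[\sup_{t \in [0,T]}|X^{i,\balpha^i}_t|\bigr]$ and then conclude by Markov's inequality. Since the drift is just the control, we have the explicit decomposition
\[
    X^{i,\balpha^i}_t = X^i_0 + \int_0^t \alpha^i_s \, ds + \sigma W^i_t,
\]
so the triangle inequality gives
\[
    \sup_{t \in [0,T]} |X^{i,\balpha^i}_t|
    \le |X^i_0| + \int_0^T |\alpha^i_s|\, ds + \sigma \sup_{t \in [0,T]} |W^i_t|.
\]

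The next step is to take expectations of the squared quantity. First I would apply Cauchy--Schwarz to the control integral: $\bigl(\int_0^T |\alpha^i_s|\,ds\bigr)^2 \le T \int_0^T |\alpha^i_s|^2 ds$, which combined with the hypothesis $\mathbb{E}\int_0^T \tfrac{1}{2}|\alpha^i_s|^2 ds \le C_{a,\max}$ yields a bound in terms of $T$ and $C_{a,\max}$ (which is itself controlled by $\kappa$). Second, Doob's maximal inequality applied to the Brownian motion $\bW^i$ gives $\mathbb{E}\bigl[\sup_{t \in [0,T]} |W^i_t|^2\bigr] \le 4T$. Combining these with the assumption $\mathbb{E}|X^i_0|^2 < +\infty$ and using the elementary inequality $(a+b+c)^2 \le 3(a^2+b^2+c^2)$, I obtain a constant $K$, depending only on $\mathbb{E}|X_0|^2$, $T$, $\sigma$, and $\kappa$, such that
\[
    \mathbb{E}\Bigl[\sup_{t \in [0,T]} |X^{i,\balpha^i}_t|^2\Bigr] \le K.
\]

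By Jensen's inequality, this gives $\mathbb{E}\bigl[\sup_{t \in [0,T]} |X^{i,\balpha^i}_t|\bigr] \le \sqrt{K}$. Markov's inequality then yields
\[
    \mathbb{P}\Bigl(\sup_{t \in [0,T]} |X^{i,\balpha^i}_t| > R\Bigr)
    \le \frac{\sqrt{K}}{R},
\]
so the conclusion holds with $K_1 := \sqrt{K}$. I do not expect any substantive obstacle: the argument is a routine combination of Cauchy--Schwarz, Doob's maximal inequality, and Markov's inequality, and the only thing to track carefully is that the constant depends only on the stated quantities (in particular, the dependence on $C_{a,\max}$ can be absorbed into a dependence on $\kappa$ since $C_{a,\max} \le \kappa$).
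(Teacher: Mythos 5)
Your proposal is correct and follows essentially the same route as the paper: decompose the state as initial condition plus control integral plus Brownian term, bound the second moment of the supremum via Cauchy--Schwarz on the control (using the hypothesis $\EE\int_0^T \tfrac12|\alpha^i_t|^2\,dt \le \kappa$) and Doob's maximal inequality on $\bW^i$, then conclude with Jensen and Markov. The constant tracking is the same as in the paper's argument.
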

\begin{proof}{Proof}
    Let us notice that
\begin{align}
	&\EE\left[ \sup_{t \in [0,T]} |X_t^{i,\balpha^i}|\right]^2
    \notag
    \\
    &\le 
    \EE\left[ \sup_{t \in [0,T]} |X_t^{i,\balpha^i}|^2\right] 
	\notag
    \\
    &\leq 
	C \EE\left[ |X_0|^2 + \int_0^T \frac{1}{2} |\alpha^i_t|^2 dt + \sigma^2 \sup_{t \in [0,T]} |W^i_t|^2 \right] 
	\notag
    \\
	&\leq 
	C \left(\EE\left[ |X_0^{i,\balpha^i}|^2 \right] + \EE \int_0^T \frac{1}{2} |\alpha^i_t|^2 dt  + \sigma^2 \EE \left[\sup_{t \in [0,T]} |W^i_t|^2 \right] \right)
	\notag
    \\
	&\leq 
	C \left(\EE\left[ |X_0|^2 \right] + \kappa + \sigma^2 T \right),
    \label{eq:MI-exp-sup-Xi}
\end{align}
where the last inequality comes from~\eqref{eq:MI-bound-alphai-kappa} and Doob's inequality. The result is obtained by Markov's inequality. 

\end{proof}

\begin{lemma}[Cost perturbation]
\label{lem MI bd cost inside}
     Suppose the Assumptions of Theorem~\ref{thm:MI-eps-Nash} hold and $\EE \int_0^T \frac{1}{2} |\alpha^i_t|^2 dt \le C_{a,max}$. Let $\epsilon>0$. There exists $N_0$ such that for all $N>N_0$, for any probability distribution $\mu^I$, we have, for all $t \in [0,T)$,
    \begin{align*}
        \EE\left[ \sup_{|y| \le 1/\sqrt{\epsilon}} \left| F(y,\mu^{N,\balpha^i,\hat{\bualpha}^{-i}}_t,\mu^I) - F(y,\mu^{\hat{\balpha}}_t,\mu^I) \right| \right] &\le \epsilon,
        \\
        \EE\left[ \sup_{|y| \le 1/\sqrt{\epsilon}} \left| g(y,\mu^{N,\balpha^i,\hat{\bualpha}^{-i}}_T,\mu^I) - g(y,\mu^{\hat{\balpha}}_T,\mu^I) \right| \right] &\le \epsilon,
    \end{align*}
    where $\mu^{\hat{\balpha}}_t$ is the distribution of $X^{\hat\balpha}_t$. 
\end{lemma}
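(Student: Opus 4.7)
{Proof proposal for Lemma~\ref{lem MI bd cost inside}.}
The plan is to use the Lipschitz continuity of $F$ (and $g$) in the mean field argument to eliminate the supremum over $y$, reducing the problem to a $W_1$-bound on the empirical distribution, then split this bound into a deviation part (one agent switches control) and an empirical convergence part (Glivenko--Cantelli / Fournier--Guillin).

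\textbf{Step 1: Remove the supremum over $y$.} By the assumed $L_F$-Lipschitz continuity of $F$ in $(x,\mu)$ (and similarly for $g$ with constant $L_g$),
\[
    \bigl|F(y,\mu^{N,\balpha^i,\hat{\bualpha}^{-i}}_t,\mu^I) - F(y,\mu^{\hat{\balpha}}_t,\mu^I)\bigr|
    \le L_F\, W_1\bigl(\mu^{N,\balpha^i,\hat{\bualpha}^{-i}}_t, \mu^{\hat{\balpha}}_t\bigr).
\]
The right-hand side is independent of $y$, so the supremum over $|y|\le 1/\sqrt\epsilon$ is trivially bounded by the same quantity. Thus it suffices to show that for $N$ large enough,
\[
    \EE\bigl[W_1\bigl(\mu^{N,\balpha^i,\hat{\bualpha}^{-i}}_t, \mu^{\hat{\balpha}}_t\bigr)\bigr] \le \epsilon/\max(L_F,L_g),
\]
uniformly in $t\in[0,T]$.

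\textbf{Step 2: Triangle inequality.} Insert the reference empirical distribution $\mu^{N,\hat{\bualpha}}_t := \frac{1}{N}\sum_{j=1}^N \delta_{X^{j,\hat\balpha}_t}$ corresponding to all $N$ agents playing $\hat\balpha$, and write
\[
    W_1\bigl(\mu^{N,\balpha^i,\hat{\bualpha}^{-i}}_t, \mu^{\hat{\balpha}}_t\bigr)
    \le W_1\bigl(\mu^{N,\balpha^i,\hat{\bualpha}^{-i}}_t, \mu^{N,\hat{\bualpha}}_t\bigr) + W_1\bigl(\mu^{N,\hat{\bualpha}}_t, \mu^{\hat{\balpha}}_t\bigr).
\]

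\textbf{Step 3: Deviation term is $O(1/N)$.} Since the drift equals the individual control, the state $X^{j,\hat\balpha}_t$ of agent $j\ne i$ does not depend on agent $i$'s control. Hence the two empirical measures differ only in their $i$-th Dirac mass, and coupling them through the identity on the other atoms yields
\[
    W_1\bigl(\mu^{N,\balpha^i,\hat{\bualpha}^{-i}}_t, \mu^{N,\hat{\bualpha}}_t\bigr) \le \frac{1}{N} \bigl|X^{i,\balpha^i}_t - X^{i,\hat\balpha}_t\bigr|.
\]
Taking expectations and using the bound~\eqref{eq:MI-exp-sup-Xi} from Lemma~\ref{lem MI bd Xi out} (applied to both $\balpha^i$, which satisfies~\eqref{eq:MI-bound-alphai-kappa}, and $\hat\balpha$, which satisfies~\eqref{eq:MI-eps-Nash-Camax}) gives
$\EE[W_1(\mu^{N,\balpha^i,\hat{\bualpha}^{-i}}_t, \mu^{N,\hat{\bualpha}}_t)] \le C/N$ for a constant $C$ depending only on $\EE|X_0|^2$, $T$, $\sigma$, $C_{F}$, $C_{g}$.

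\textbf{Step 4: Empirical convergence term.} The $N$ random variables $X^{j,\hat\balpha}_t$ are i.i.d.\ with law $\mu^{\hat\balpha}_t$ and, by Step~1 of Theorem~\ref{thm:MI-eps-Nash}, have a second moment bounded uniformly in $t\in[0,T]$. By~\cite{fournier2015rate} (in dimension $d=1$, the bound is of order $N^{-1/2}$ up to log factors),
\[
    \EE\bigl[W_1\bigl(\mu^{N,\hat{\bualpha}}_t, \mu^{\hat\balpha}_t\bigr)\bigr] \le \phi(N),
\]
with $\phi(N)\to 0$ as $N\to \infty$ and $\phi$ independent of $t$ (thanks to the uniform moment bound).

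\textbf{Step 5: Conclude.} Combining Steps~3 and~4 gives $\EE[W_1(\mu^{N,\balpha^i,\hat{\bualpha}^{-i}}_t, \mu^{\hat{\balpha}}_t)] \le C/N + \phi(N)$ uniformly in $t$. Choosing $N_0$ large enough that the right-hand side is below $\epsilon/\max(L_F,L_g)$ yields both claimed inequalities.

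\textbf{Expected main obstacle.} Steps~1--3 are essentially elementary once one notices that the drift is purely the control. The only technical point is obtaining a moment bound on $X^{\hat\balpha}_t$ that is uniform in $t$ with sufficiently many moments to invoke the Fournier--Guillin rate; this follows from energy estimates combined with~\eqref{eq:MI-eps-Nash-Camax}, but requires care to ensure the moment bound (and hence $\phi$) does not degrade with $t$.
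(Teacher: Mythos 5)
Your proof is correct and follows the same skeleton as the paper's: Lipschitz continuity in the measure argument kills the supremum over $y$, a triangle inequality through the reference empirical measure of $N$ agents all playing $\hat\balpha$ splits the $W_1$ distance into a single-agent deviation term and an empirical-convergence term, the former is $O(1/N)$ and the latter is handled by Fournier--Guillin. Two of your sub-arguments differ from the paper's and are in fact cleaner. First, for the deviation term you use the actual states $X^{j,\hat\balpha}_t$, which coincide with $X^{j,\balpha^i,\hat{\bualpha}^{-i}}_t$ for every $j\neq i$ since the drift is the individual control; the coupling that is the identity on the other atoms then gives $W_1\le \frac{1}{N}|X^{i,\balpha^i}_t-X^{i,\hat\balpha}_t|$ exactly. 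The paper instead compares to fresh independent copies $\tilde X^{j,\hat\balpha}_t$ and invokes the law of large numbers to control the $j\neq i$ terms, which is a more roundabout (and, as written, somewhat murkier) way to reach the same $O(1/N)$ bound. Second, you obtain a choice of $N_0$ that is uniform in $t$ directly from the quantitative Fournier--Guillin rate (in $d=1$, of order $N^{-1/2}$ with a constant controlled by the second moment, which is bounded uniformly in $t$ via the energy estimate~\eqref{eq:MI-exp-sup-Xi}); the paper instead only uses the qualitative convergence for each fixed $t$ and then proves that $t\mapsto \mathbb{F}_t$ is $\tfrac12$-H\"older continuous to upgrade pointwise smallness to uniform smallness. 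Your route buys a shorter argument and an explicit rate; the paper's route is more robust in situations where one only has a qualitative convergence of the empirical measure. Both are complete proofs of the lemma.
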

\begin{proof}{Proof}
    We start with $F$. Let us define, for all $t \in [0,T)$:
    \[
    	\mathbb{F}_t = \EE\left[ \sup_{|y| \le 1/\sqrt{\epsilon}} \left| F(y,\mu^{N,\balpha^i,\hat{\bualpha}^{-i}}_t,\mu^I) - F(y,\mu^{\hat{\balpha}}_t,\mu^I) \right| \right]. 
    \]
    Let $t \in [0,T)$. Using the Lipschitz continuity of $F$ with respect to $\mu$ uniformly in the other variables, we note that:
    \begin{align*}
        &\mathbb{F}_t
        \\
        & \le L_F \left(\EE\left[ W_1\left( \mu^{N,\balpha^i,\hat{\bualpha}^{-i}}_t, \mu^{N,\hat{\bualpha}}_t \right) \right] + \EE\left[ W_1\left( \mu^{N,\hat{\bualpha}}_t, \mu^{\hat{\bualpha}}_t \right) \right] \right)
        \\ 
        & \le L_F \left( \frac{1}{N} \sum_{j=1}^N \EE\left[ \left| X^{j,\balpha^i,\hat{\bualpha}^{-i}}_t - \tilde X^{j,\hat{\balpha}}_t \right| \right]  + \EE\left[ W_1\left( \mu^{N,\hat{\bualpha}}_t, \mu^{\hat{\bualpha}}_t \right) \right] \right),
    \end{align*}
    where $\mu^{N,\hat{\bualpha}}$ denotes the empirical distribution of  $(\tilde X^{j,\hat{\balpha}}_t)_{j=1}^N$, which are $N$ i.i.d. copies of $X^{\hat{\balpha}}_t$, with law $\mu^{\hat{\balpha}}_t$. The last inequality above uses the Kantorovich-Rubinstein dual representation of $W_1$. Furthermore, by~\cite[Theorem 1]{fournier2015rate}, $\EE\left[ W_1\left( \mu^{N,\hat{\bualpha}}_t, \mu^{\hat{\bualpha}}_t \right) \right]$ converges to $0$ as $N$ goes to infinity. 
    
    Next, we note that:
    \[
    \begin{aligned}
        &\frac{1}{N} \sum_{j=1}^N \EE\left[ \left| X^{j,\balpha^i,\hat{\bualpha}^{-i}}_t - \tilde X^{j,\hat{\balpha}}_t \right| \right] 
        \\
        &\le 
        \frac{N-1}{N} \frac{1}{N-1} \sum_{j \neq i} \EE\left[ \left| X^{j,\hat{\balpha}}_t - \tilde X^{j,\hat{\balpha}}_t \right| \right]
        \\
        &\qquad + \frac{1}{N} \EE\left[ \left| X^{i,\balpha^i}_t - \tilde X^{i,\hat{\balpha}}_t \right| \right].
    \end{aligned}
    \]
    For the first term, note that for $j\neq i$, $X^{j,\hat{\balpha}}_t$ and $\tilde X^{j,\hat{\balpha}}_t$ have the same law. So by the law of large numbers, there exists $N_0(t)$ such that for all $N \ge N_0(t)$, this term is smaller than $\epsilon/2$. For the second term, it is bounded by 
    \[
    \begin{aligned}
        &\frac{1}{N} (\EE[ | X^{i,\hat{\balpha}}_t | ] + \EE[ | \tilde X^{i,\hat{\balpha}}_t | ]) 
        \\
        &\le \frac{1}{N}  C \left(\EE\left[ |X_0|^2 \right] + C_{a,max} + \sigma^2 T \right)^{1/2},
    \end{aligned}
    \]
    where we used~\eqref{eq:MI-exp-sup-Xi} to obtain abound on the first moment of the state process.   
    Hence the first inequality in the statement.

    To finish the proof for $F$, we need to argue that $N_0(t)$ can be chosen independently of $t$. For this, we use the Lipschitz continuity of $F$. We have:
    \begin{align*}
        &\EE\left[ \sup_{|y| \le 1/\sqrt{\epsilon}} \left| F(y,\mu^{N,\balpha^i,\hat{\bualpha}^{-i}}_t,\mu^I) - F(y,\mu^{N,\balpha^i,\hat{\bualpha}^{-i}}_s,\mu^I) \right| \right]
        \\
        & \le L_F \EE\left[ W_1\left( \mu^{N,\balpha^i,\hat{\bualpha}^{-i}}_t, \mu^{N,\balpha^i,\hat{\bualpha}^{-i}}_s \right) \right]
        \\
        & \le L_F \frac{1}{N} \sum_{j=1}^N \EE\left[ \left| X^{j,\balpha^i,\hat{\bualpha}^{-i}}_t - X^{j,\balpha^i,\hat{\bualpha}^{-i}}_s \right| \right].
    \end{align*}
    Next, we note that when $j \neq i$,
    \begin{align*}
        &\EE\left[ \left| X^{j,\hat{\balpha}}_t - X^{j,\hat{\balpha}}_s \right| \right]
    	\\
        &\le 
    	\EE\int_s^t \left| \hat\alpha(r, X^{j,\hat{\balpha}}_r)\right| dr  + \sigma \EE|W^j_t - W^j_s| 
    	\\
    	&\le \left(\EE\int_0^T \left| \hat\alpha(r, X^{j,\hat{\balpha}}_r)\right|^2 dr\right)^{1/2}  \sqrt{t-s}  + \sigma \sqrt{t-s}
	\\
	&\le C \sqrt{t-s},
    \end{align*}
    where we used Cauchy-Schwarz inequality and~\eqref{eq:MI-eps-Nash-Camax}.  For $j=i$, we proceed similarly and use~\eqref{eq:MI-bound-alphai-kappa} instead of~\eqref{eq:MI-eps-Nash-Camax}. We deduce that:
    \begin{align*}
        &\EE\left[ \sup_{|y| \le 1/\sqrt{\epsilon}} \left| F(y,\mu^{N,\balpha^i,\hat{\bualpha}^{-i}}_t,\mu^I) - F(y,\mu^{N,\balpha^i,\hat{\bualpha}^{-i}}_s,\mu^I) \right| \right]
        \\
        &\le C \sqrt{t-s}.
    \end{align*}
    A similar result holds when $\mu^{N,\balpha^i,\hat{\bualpha}^{-i}}$ is replaced by $\mu^{\hat{\balpha}}$. 
    As a consequence, 
    \begin{align*}
    &|\mathbb{F}_t - \mathbb{F}_s|
	\\
    &\le \EE\left[ \sup_{|y| \le 1/\sqrt{\epsilon}} \left| F(y,\mu^{N,\balpha^i,\hat{\bualpha}^{-i}}_t,\mu^I) - F(y,\mu^{N,\balpha^i,\hat{\bualpha}^{-i}}_s,\mu^I) \right| \right]
	\\
	&\qquad\qquad + \EE\left[ \sup_{|y| \le 1/\sqrt{\epsilon}} \left| F(y,\mu^{\hat{\balpha}}_t,\mu^I) - F(y,\mu^{\hat{\balpha}}_s,\mu^I) \right| \right]
	\\
	&\le C \sqrt{t-s}. 
    \end{align*}
    We deduce that it is possible to find $N_0$ independent of $t$ such that, for all $t \in [0,T]$, $\mathbb{F}_t \le \epsilon$.

    The inequality for $g$ in the statement is proved analogously.

\end{proof}

\subsection{Equilibrium characterization}
\label{app:MI-Pontryagin}

We start by writing in detail the Assumption~{\bf Pontryagin Optimality} of~\cite[p.542-543]{CarmonaDelarue_book_I} adapted to the MI setting are satisfied for $b,\sigma,f,g$:
\begin{itemize}
    \item The functions \( b \) and \( f \) are differentiable with respect to \( (x, \alpha) \), the mappings \( (x, \alpha, \mu, \mu') \mapsto \partial_x(b, f)(t, x, \alpha, \mu, \mu') \) and \( (x, \alpha, \mu, \mu') \mapsto \partial_\alpha(b, f)(t, x, \alpha, \mu, \mu') \) being continuous for each \( t \in [0, T] \). The functions \( b \) and \( f \) are also differentiable with respect to the variable \( \mu' \), the mapping \( (x, \alpha, \mu, X') \mapsto \partial_{\mu'}(b,  f)(t, x, \alpha, \mu, \mathcal{L}(X'))(X') \) being continuous for each \( t \in [0, T] \). Similarly, the function \( g \) is differentiable with respect to \( x \), the mapping \( (x, \mu, \mu') \mapsto \partial_x g(x, \mu, \mu') \) being continuous. 
    The function \( g \) is also differentiable with respect to the variable \( \mu' \), the mapping \( (x, \mu, X') \mapsto \partial_{\mu'} g(x, \mu, \mathcal{L}(X'))(X') \) being continuous.
    \item The function \( [0, T] \ni t \mapsto (b, f)(t, 0, 0, \delta_0, \delta_0) \) is uniformly bounded. The derivatives \( \partial_x b \) and \( \partial_\alpha b \) are uniformly bounded and the mapping \( x' \mapsto \partial_{\mu'}b (t, x, \alpha, \mu, \mu')(x') \) has an \( L^2(\mathbb{R}^d; \mu'; \mathbb{R}^d) \)-norm which is also uniformly bounded (i.e., uniformly in \( (t, x, \alpha, \mu, \mu') \)). There exists a constant \( L \) such that, for any \( R \geq 0 \) and any \( (t, x, \alpha, \mu, \mu') \) such that \( |x|, |\alpha|, M_2(\mu), M_2(\mu') \leq R \), \( |\partial_x f(t, x, \alpha, \mu, \mu')| \), \( |\partial_x g(x, \mu)| \), and \( |\partial_{\mu'} f(t, x, \alpha, \mu, \mu')(x')| \) are bounded by \( L(1 + R) \) and the \( L^2(\mathbb{R}^d; \mu'; \mathbb{R}^d) \)-norms of \( x' \mapsto \partial_{\mu'} f(x, \alpha, \mu, \mu')(x') \) and \( x' \mapsto \partial_{\mu'} g(x, \mu, \mu')(x') \) are bounded by \( L(1 + R) \).
\end{itemize}
For the sufficient condition, we also assume:
\begin{itemize}
    \item $(x,\mu,\mu') \mapsto g(x,\mu,\mu')$ is convex;
    \item $(x,\alpha,\mu,\mu') \mapsto H^\mixind(t, x,  \alpha, \mu, \mu', Y^{\hat\balpha}_t, Z^{\hat\balpha}_t)$ is convex $\mathrm{Leb}_1 \otimes \PP$-a.e..
\end{itemize}

\begin{proof}{Proof of Theorem~\ref{thm:MI-Pontryagin}}
First, we consider the MKV control problem for a representative agent, with the population distribution $\hat\bmu$ fixed. By~\cite[Proposition 6.15 and Theorem 6.16]{CarmonaDelarue_book_I}, we obtain that the control is optimal if and only if:
\[
    H^\mixind(t, X^{\hat\balpha}_t, \hat{\alpha}_t, \hat\mu_t, \mu^{\hat\balpha}_t, Y^{\hat\balpha}_t, Z^{\hat\balpha}_t) = \inf_\alpha H^\mixind(t, X^{\hat\balpha}_t, \alpha, \hat\mu_t, \mu^{\hat\balpha}_t, Y^{\hat\balpha}_t, Z^{\hat\balpha}_t),
\]
with
\begin{equation*}
    \begin{cases}
        dX^{\hat\balpha}_t = b(t, X^{\hat\balpha}_t, \hat{\alpha}_t, \hat\mu_t, \mu^{\hat\balpha}_t) dt + \sigma dW_t
        \\
        dY^{\hat\balpha}_t = \Big(- \partial_x H^\mixind(t, X^{\hat\balpha}_t, \hat{\alpha}_t, \hat\mu_t, \mu^{\hat\balpha}_t, Y^{\hat\balpha}_t, Z^{\hat\balpha}_t) 
        \\ \quad - \tilde\EE\left[\partial_{\mu_2} H^\mixind(t, \tilde{X}^{\hat\balpha}_t,  \tilde{\hat{\alpha}}_t, \hat\mu_t, \mu^{\hat\balpha}_t, \tilde{Y}^{\hat\balpha}_t, \tilde{Z}^{\hat\balpha}_t)(X^{\hat\balpha}_t) \right] \Big) dt 
        \\ \quad + Z_t dW_t
        \\
        X^{\hat\balpha}_0 \sim \mu_0, \qquad Y^{\hat\balpha}_T = g(X^{\hat\balpha}_T, \hat\mu_T, \mu^{\hat\balpha}_T),
    \end{cases}
\end{equation*}
where $\mu^{\hat\balpha}_t = \cL(X^{\hat\balpha}_t)$.

From here, we use the consistency condition in the Nash equilibrium definition implies that $\bmu^{\hat\balpha}$ and $\hat\bmu$ must be equal, which yields the system~\eqref{eq:FBSDE-MI}.
\end{proof}

\section{Proofs for MP mean field model}\label{app:MP-proofs}

\subsection{Approximate equilibrium}
\label{app:MP-eps-Nash}

We start by stating the technical conditions that we will use in the proof:
\begin{itemize}
    \item The drifts are of the form: $b^\rmnc(t,x,a,\mu,\mu') = a$, $b^\rmc(t,x,a,\mu,\mu') = a$
    \item The running costs are of the form: $f^\rmnc(t,x,a,\mu,\mu') = \frac{1}{2}|a|^2 + F^\rmnc(x,\mu,\mu')$ and $f^\rmc(t,x,a,\mu,\mu') = \frac{1}{2}|a|^2 + F^\rmc(x,\mu,\mu')$
	\item The maps $F^\rmnc$ and $F^\rmc$ are $L_{F}$-Lipschitz continuous with respect to $(x,\mu,\mu')$ in the following sense: for any $(x,\mu,\mu')$ and $(\tilde x,\tilde \mu,\tilde\mu')$
    \[
    \begin{aligned}
	&|\varphi(x,\mu,\tilde\mu) - \varphi(\tilde x,\tilde \mu,\tilde\mu')| 
    \\
    &\leq L_F [|x-\tilde x| + W_1(\mu,\tilde \mu) + W_1(\mu',\tilde\mu')]
    \end{aligned}
    \]
for $\varphi \in \{F^\rmnc, F^\rmc\}$, 
where $W_1$ is the $1$-Wasserstein distance. Likewise, the terminal costs $g^\rmnc$ and $g^\rmc$ are Lipschitz continuous in $(x,\mu,\tilde\mu)$ with Lipschitz constant $L_{g}$. 
	\item The maps $F^\rmnc, F^\rmc, g^\rmnc$ and $g^\rmc$ are bounded: $C_{F^\rmnc} := \|F^\rmnc\|_\infty$, $C_{F^\rmc} := \|F^\rmc\|_\infty$, $C_{g^\rmnc} := \|g^\rmnc\|_\infty$ and $C_{g^\rmc} := \|g^\rmc\|_\infty$ are finite.
	\item The initial distribution has a finite second moment: $\EE|X_0|^2<+\infty$.
\end{itemize}

\begin{proof}{Proof of Theorem~\ref{thm:MP-eps-Nash}}

We denote by $\hat{\bualpha}^\rmnc = (\hat\bualpha^{j,\rmnc})_{j=1}^{N^\rmnc}$ the control profile where $\hat\alpha^{j,\rmnc}_t = \hat\alpha^\rmnc(t,X^{\rmnc, j,\hat\bualpha^\rmnc,\hat\bualpha^\rmc}_t)$, and likewise for the cooperative control profile, $\hat{\bualpha}^\rmc$. For simplicity, we will denote $(\epsilon^\rmnc(N^\rmnc,N^\rmc), \epsilon^\rmc(N^\rmnc,N^\rmc)) = (\epsilon^\rmnc, \epsilon^\rmc)$. We want to show that: 
for every $i \in [N^\rmnc]$ and every control $\balpha^{i,\rmnc}$, 
    \begin{equation}
    \label{eq:MP-eps-Nash-NC}
    \begin{aligned}
        &J^{\mixpop,N,\rmnc}(\hat\balpha^{i,\rmnc};\hat\bualpha^{-i,\rmnc}, \hat\bualpha^{\rmc})
        \\
        &\le 
        J^{\mixpop,N,\rmnc}(\balpha^{i,\rmnc};\hat\bualpha^{-i,\rmnc}, \hat\bualpha^{\rmc}) + \epsilon^\rmnc;
    \end{aligned}
    \end{equation}
    and for every control profile $\bualpha^{\rmc} = (\bualpha^{1, \rmc}, \dots, \bualpha^{N^\rmc, \rmc})$, 
    \begin{equation}
    \label{eq:MP-eps-Nash-C}
        J^{\mixpop,N,\rmc}(\hat\bualpha^{\rmc}; \hat\bualpha^{\rmnc})
        \le 
        J^{\mixpop,N,\rmc}(\bualpha^{\rmc}; \hat\bualpha^{\rmnc}) + \epsilon^\rmc.
    \end{equation}

Note that, since we assume that the drift is the control, $X_t^{i,\rmnc,\bualpha^{\rmnc}, \bualpha^{\rmc}} = X_t^{i,\rmnc,\balpha^{i,\rmnc}}$ does not depend on other agents' controls, and likewise  $X_t^{i,\rmc,\bualpha^{\rmnc}, \bualpha^{\rmc}} = X_t^{i,\rmc,\balpha^{i,\rmc}}$. As a consequence, the empirical distributions can be denoted: $\mu^{N,\rmnc, \bualpha^{\rmnc}, \bualpha^{\rmc}}_t = \mu^{N,\rmnc,\bualpha^{\rmnc}}_t$ and  $\mu_t^{N,\rmc, \bualpha^{\rmnc}, \bualpha^{\rmc}} = \mu_t^{N,\rmc,\bualpha^{\rmc}}$. 
Then the costs rewrite:
\begin{equation}
\begin{aligned}
    &J^{\mixpop,N,\rmnc}(\balpha^{i,\rmnc};\bualpha^{-i,\rmnc}, \bualpha^{\rmc}) 
    \\
    &= \EE \Big[\int_0^T f^{\rmnc}(t, X_t^{i,\rmnc,\balpha^{i,\rmnc}}, \alpha^{i,\rmnc}_t, \mu^{N,\rmnc, \bualpha^{\rmnc}}_t, \mu_t^{N,\rmc, \bualpha^{\rmc}})dt 
    \\
    &\qquad\qquad + g^{\rmnc}(X_T^{i,\rmnc,\balpha^{i,\rmnc}}, \mu^{N,\rmnc,\bualpha^{\rmnc}}_T, \mu_T^{N,\rmc, \bualpha^{\rmc}})\Big].
\end{aligned}
\end{equation}
and:
\begin{equation}
\begin{aligned}
    &J^{\mixpop,N,\rmc}(\bualpha^{\rmc};\bualpha^{\rmnc}) 
    \\
    &= \frac{1}{N^\rmc} \sum_{i=1}^{N^\rmc} 
    \EE \Big[\int_0^T f^{\rmc}(t, X_t^{i,\rmc, \balpha^{i,\rmc}}, \alpha^{i,\rmc}_t, \mu^{N,\rmnc, \bualpha^{\rmnc}}_t, \mu_t^{N,\rmc, \bualpha^{\rmc}})dt 
    \\
    &\qquad\qquad\qquad\qquad + g^{\rmc}(X_T^{i,\rmc,\balpha^{i,\rmc}}, \mu^{N,\rmnc,\bualpha^{\rmnc}}_T, \mu_T^{N,\rmc, \bualpha^{\rmc}})\Big].
\end{aligned}
\end{equation}

\noindent{\bf Step 1. Non-cooperative population. }  
The analysis of the non-cooperative population is very similar to the proof of Theorem~\ref{thm:MI-eps-Nash}, for the mixed-individual setting, except for the fact that the individual distribution is replaced by the empirical distribution of the cooperative population. The follow the same proof structure but with the modifications induced by this remark.

Fix $i \in [N^\rmnc]$ and consider a control $\balpha^{i,\rmnc}$.

\noindent{\bf Step 1.1: Bound on the controls. }
Note that, by optimality of $\hat\balpha^\rmnc$, 
\[
	J^{\mixpop,\rmnc}(\hat\balpha^{\rmnc};\bmu^{\rmnc,\hat\balpha^{\rmnc}}, \bmu^{\rmc,\hat\balpha^{\rmc}})
	\le J^{\mixpop,\rmnc}(0;\bmu^{\rmnc,\hat\balpha^{\rmnc}}, \bmu^{\rmc,\hat\balpha^{\rmc}}),
\]
where $0$ denotes the control which is identically $0$. The left-hand side is bounded from below as:
\[
\begin{aligned}
	&\EE\int_0^T \frac{1}{2}|\hat\alpha^\rmnc(t,X^{\hat\balpha^\rmnc}_t)|^2dt - (TC_{F^\rmnc} + C_{g^\rmnc}) 
	\\
    &\le
	J^{\mixpop,\rmnc}(\hat\balpha^{\rmnc};\bmu^{\rmnc,\hat\balpha^{\rmnc}}, \bmu^{\rmc,\hat\balpha^{\rmc}})
\end{aligned}
\]
while the right-hand side is bounded from above by $(TC_{F^\rmnc} + C_{g^\rmnc})$. 
As a consequence,
\begin{equation}
\label{eq:MP-eps-Nash-Camax-NC}
    \EE\int_0^T \frac{1}{2}|\hat\alpha^\rmnc(t,X^{\hat\balpha^\rmnc}_t)|^2dt
    \le 2 (TC_{F^\rmnc} + C_{g^\rmnc}) =: C_{a,max}^\rmnc. 
\end{equation}
Without loss of generality, we can assume that 
\begin{equation}
    \label{eq:MP-bound-alphai-kappa-NC}
    \EE \int_0^T \frac{1}{2}|\alpha^{i,^\rmnc}_t|^2 dt 
    \le \epsilon +  (TC_{F^\rmnc} + C_{g^\rmnc}) + C_{a,max}^\rmnc =: \kappa^\rmnc.
\end{equation}
Indeed, if the above inequality is not true, then $J^{\mixpop,N,\rmnc}(\balpha^{i,\rmnc};\hat\bualpha^{-i,\rmnc}, \hat\bualpha^{\rmc})$ is large enough that~\eqref{eq:MP-eps-Nash-NC} holds, which concludes the proof immediately. 

\vskip 6pt

\noindent{\bf Step 1.2: Difference between the finite population cost and the mean field cost. } Proceeding as in the proof of Theorem~\ref{thm:MI-eps-Nash}, we can show that: for any control $\balpha^{i,\rmnc}$, 
\begin{equation}
\label{eq:mp-NC-diff-finite-mf-costs}
\begin{aligned}
    &\Big|J^{\mixpop,N,\rmnc}(\balpha^{i,\rmnc};\hat\bualpha^{-i,\rmnc}, \hat\bualpha^{\rmc})
    \\
    &\quad - 
    J^{\mixpop,\rmnc}(\balpha^{i,\rmnc};\bmu^{\rmnc,\hat\balpha^{\rmnc}}, \bmu^{\rmc,\hat\balpha^{\rmc}})\Big| \le C_1 \epsilon,
\end{aligned}
\end{equation}
Indeed, Lemma~\ref{lem MI bd Xi out} can be used as such because the dynamics is the same. Lemma~\ref{lem MI bd cost inside} needs to be adapted as follows.
\begin{lemma}[Cost perturbation]
\label{lem MP bd cost inside}
     Suppose the Assumptions of Theorem~\ref{thm:MP-eps-Nash} hold and $\EE \int_0^T \frac{1}{2} |\alpha^{i,\rmnc}_t|^2 dt \le C_{a,max}$. Let $\epsilon>0$. There exists $N_0$ such that for all $N^\rmnc>N_0$ and $N^\rmc>N_0$, 
    \begin{align*}
        \EE\left[ \sup_{|y| \le 1/\sqrt{\epsilon}} \left| F^{\rmnc}(y,\check\mu^{i,\rmnc}_t,\hat\mu^{N,\rmc}_t) - F^{\rmnc}(y,\hat\mu^{\rmnc}_t,\hat\mu^{\rmc}_t) \right| \right] &\le \epsilon,
        \\
        \EE\left[ \sup_{|y| \le 1/\sqrt{\epsilon}} \left| g^{\rmnc}(y,\check\mu^{i,\rmnc}_T,\hat\mu^{N,\rmc}_T) - g^{\rmnc}(y,\hat\mu^{\rmnc}_T,\hat\mu^{\rmc}_T) \right| \right] &\le \epsilon,
    \end{align*}
    where we use the shorthand notations $(\check\mu^{i,\rmnc}_t,\hat\mu^{N,\rmc}_t) = (\mu^{N,\rmnc,\balpha^{i,\rmnc},\hat{\bualpha}^{-i,\rmnc}}_t,\hat\mu^{N,\rmc}_t)$, $\hat\mu^{\rmnc}_t = \mu^{\rmnc,\hat{\balpha}^{\rmnc}}_t$, $\hat\mu^{\rmc}_t = \mu^{\rmc,\hat{\balpha}^{\rmc}}_t$, $\hat\mu^{N,\rmc}_t = \mu^{N,\rmc,\hat{\balpha}^{\rmc}}_t$
     and 
    where $\mu^{\rmnc, \hat{\balpha}^\rmnc}_t$ (resp. $\mu^{\rmc, \hat{\balpha}^\rmc}_t$) is the distribution of $X^{\hat\balpha^\rmnc}_t$ (resp. $X^{\hat\balpha^\rmc}_t$). 
\end{lemma}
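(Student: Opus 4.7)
The plan is to follow the same template as the proof of Lemma~\ref{lem MI bd cost inside}, the only genuinely new feature being that the second measure argument of $F^\rmnc$ is now itself a random empirical measure (the cooperative population's $\hat\mu^{N,\rmc}_t$) rather than the deterministic law of one agent. First I would use the Lipschitz assumption on $F^\rmnc$, uniformly in $x$, to split
\[
    \big| F^{\rmnc}(y,\check\mu^{i,\rmnc}_t,\hat\mu^{N,\rmc}_t) - F^{\rmnc}(y,\hat\mu^{\rmnc}_t,\hat\mu^{\rmc}_t) \big|
    \le L_F \big[W_1(\check\mu^{i,\rmnc}_t,\hat\mu^{\rmnc}_t) + W_1(\hat\mu^{N,\rmc}_t,\hat\mu^{\rmc}_t)\big].
\]
This kills the supremum over $|y|\le 1/\sqrt\epsilon$, so I only need to bound the expectation of each Wasserstein distance. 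The argument for $g^\rmnc$ is literally identical with $t$ replaced by $T$ and $L_F$ by $L_g$, so I will focus on $F^\rmnc$.

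For the first Wasserstein distance, I would introduce the auxiliary empirical measure $\mu^{N,\rmnc,\hat\bualpha^\rmnc}_t$ of $N^\rmnc$ i.i.d. copies of $X^{\hat\balpha^\rmnc}_t$ and apply the triangle inequality. The dual characterization of $W_1$ combined with synchronous coupling then yields
\[
    \EE[W_1(\check\mu^{i,\rmnc}_t,\mu^{N,\rmnc,\hat\bualpha^\rmnc}_t)] \le \frac{1}{N^\rmnc}\sum_{j=1}^{N^\rmnc}\EE\big|X^{j,\rmnc,\balpha^{i,\rmnc},\hat\bualpha^{-i,\rmnc}}_t-\tilde X^{j,\rmnc,\hat\balpha^\rmnc}_t\big|,
\]
and exactly as in the MI proof every $j\neq i$ pair has matching law, while the remaining agent-$i$ contribution is $O(1/N^\rmnc)$ thanks to the first-moment bound~\eqref{eq:MI-exp-sup-Xi} (applied with the constant $\kappa^\rmnc$ from~\eqref{eq:MP-bound-alphai-kappa-NC} and $C^\rmnc_{a,max}$ from~\eqref{eq:MP-eps-Nash-Camax-NC}). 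The residual $\EE[W_1(\mu^{N,\rmnc,\hat\bualpha^\rmnc}_t,\hat\mu^\rmnc_t)]$ is a classical empirical-to-population Wasserstein distance and decays to $0$ by~\cite[Theorem 1]{fournier2015rate}.

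For the second Wasserstein distance, the cooperative agents all use the same symmetric feedback control $\hat\alpha^\rmc(t,\cdot)$ with i.i.d. initial conditions and independent Brownian motions, so $(X^{j,\rmc,\hat\balpha^\rmc})_{j=1}^{N^\rmc}$ is an i.i.d. family with common law $\hat\mu^\rmc_t$. Hence $\EE[W_1(\hat\mu^{N,\rmc}_t,\hat\mu^\rmc_t)]\to 0$ by the same Fournier-Guillin estimate, which combined with Step~2 yields the pointwise-in-$t$ inequality in the statement.

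Finally, to obtain an $N_0$ that is uniform in $t\in[0,T]$, I would check equicontinuity of the integrand exactly as in the MI proof: using $|X^{j,\cdot}_t-X^{j,\cdot}_s|\le\int_s^t|\alpha^{j,\cdot}_r|dr+\sigma|W^j_t-W^j_s|$, Cauchy--Schwarz, and the $L^2$-bounds~\eqref{eq:MP-bound-alphai-kappa-NC} and~\eqref{eq:MP-eps-Nash-Camax-NC} together with their cooperative counterpart, one gets a $\sqrt{|t-s|}$ modulus of continuity for $t\mapsto\mathbb{F}^\rmnc_t$, so a finite time grid reduces the uniform-in-$t$ statement to finitely many pointwise ones. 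The main obstacle is mostly bookkeeping: compared with Lemma~\ref{lem MI bd cost inside}, I now have two random empirical measures to compare with two different mean-field limits, and I must remember to invoke the cooperative-side control bounds wherever the MI proof used the individual-agent bounds; no genuinely new estimate is required.
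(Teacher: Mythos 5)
Your proposal is correct and follows essentially the same route as the paper's proof: a Lipschitz bound that removes the supremum over $y$, a triangle inequality through the auxiliary i.i.d.\ empirical measure of the non-cooperative population under $\hat\balpha^{\rmnc}$, the Fournier--Guillin rate for the empirical-to-population Wasserstein distances of both populations, the $O(1/N^{\rmnc})$ treatment of the deviating agent via the moment bounds, and a $\sqrt{|t-s|}$ equicontinuity argument to make $N_0$ uniform in $t$. No substantive differences to report.
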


\begin{proof}{Proof}
            We start with $F$. Let us define:
    \[
    	\mathbb{F}_t = \EE\left[ \sup_{|y| \le 1/\sqrt{\epsilon}} \left| F^{\rmnc}(y,\check\mu^{i,\rmnc}_t,\hat\mu^{N,\rmc}_t) - F^{\rmnc}(y,\hat\mu^{\rmnc}_t, \hat\mu^{\rmc}_t) \right| \right], \qquad t \in [0,T). 
    \]
    Let $t \in [0,T)$. Using the Lipschitz continuity of $F$ with respect to $\mu^\rmnc$ and $\mu^\rmc$ uniformly in $y$, we note that:
    \begin{align*}
        \mathbb{F}_t
        & \le L_F \EE\left[ W_1\left( \check\mu^{i,\rmnc}_t, \hat\mu^{N,\rmnc}_t \right) + W_1\left( \hat\mu^{N,\rmnc}_t, \hat\mu^{\rmnc}_t \right)
        + W_1\left( \hat\mu^{N,\rmc}_t, \hat\mu^{\rmc}_t \right) \right]
        \\ 
        & \le L_F \left(\frac{1}{N^\rmnc} \sum_{j=1}^{N^\rmnc} \EE\left[ \left| X^{j,\balpha^{i,\rmnc},\hat{\bualpha}^{-i,\rmnc}}_t - \tilde X^{j,\hat{\balpha}^{\rmnc}}_t \right| \right] 
        + \EE\left[ W_1\left( \hat\mu^{N,\rmnc}_t, \hat\mu^{\rmnc}_t \right)
        + W_1\left( \hat\mu^{N,\rmc}_t, \hat\mu^{\rmc}_t \right) \right] \right),
    \end{align*}
    where $\hat\mu^{N,\rmnc}_t$ denotes the empirical distribution of  $(\tilde X^{j,\hat{\balpha}^\rmnc}_t)_{j=1}^N$, which are $N$ i.i.d. copies of $X^{\hat{\balpha}^\rmnc}_t$, with law $\mu^{\hat{\balpha}}_t$. The last inequality above uses the Kantorovich-Rubinstein dual representation of $W_1$. Furthermore, by~\cite[Theorem 1]{fournier2015rate}, $\EE\left[ W_1\left( \hat\mu^{N,\rmnc}_t, \hat\mu^{\rmnc}_t \right)
        + W_1\left( \hat\mu^{N,\rmc}_t, \hat\mu^{\rmc}_t \right) \right]$ converges to $0$ as $N^\rmnc$ and $N^\rmc$ go to infinity. 
    
    Proceeding as in the proof of Lemma~\ref{lem MI bd cost inside}, we can show that $\frac{1}{N^\rmnc} \sum_{j=1}^{N^\rmnc} \EE\left[ \left| X^{j,\balpha^{i,\rmnc},\hat{\bualpha}^{-i,\rmnc}}_t - \tilde X^{j,\hat{\balpha}^{\rmnc}}_t \right| \right]$ goes to $0$ as $N^\rmnc$ goes to infinity, and we can also show that the choice of $N_0^\rmnc$ and $N_0^\rmc$ is independent of the time $t$.  
        
    The inequality for $g$ in the statement is proved analogously. 
    
\end{proof}

\noindent{\bf Step 1.3: Conclusion. } 
From~\eqref{eq:mp-NC-diff-finite-mf-costs} we will deduce that:
\[
\begin{aligned}
    &J^{\mixpop,N,\rmnc}(\balpha^{i,\rmnc};\hat\bualpha^{-i,\rmnc}, \hat\bualpha^{\rmc})
    \\
    &\ge 
    J^{\mixpop,\rmnc}(\balpha^{i,\rmnc};\bmu^{\rmnc,\hat\balpha^{\rmnc}}, \bmu^{\rmc,\hat\balpha^{\rmc}}) - C_1 \epsilon,
\end{aligned}
\]
and (taking $\balpha^{i,\rmnc}$ to be the optimal control in~\eqref{eq:mp-NC-diff-finite-mf-costs}):
\[
\begin{aligned}
    &J^{\mixpop,\rmnc}(\hat\balpha^{\rmnc};\bmu^{\rmnc,\hat\balpha^{\rmnc}}, \bmu^{\rmc,\hat\balpha^{\rmc}}) 
    \\
    &\ge 
    J^{\mixpop,N,\rmnc}(\hat\balpha^{\rmnc};\hat\bualpha^{-i,\rmnc}, \hat\bualpha^{\rmc}) - C_1 \epsilon.
\end{aligned}
\]
Combining the last two inequalities above gives:
\[
\begin{aligned}
    &J^{\mixpop,N,\rmnc}(\balpha^{i,\rmnc};\hat\bualpha^{-i,\rmnc}, \hat\bualpha^{\rmc})
    \\
    &\ge 
    J^{\mixpop,N,\rmnc}(\hat\balpha^{\rmnc};\hat\bualpha^{-i,\rmnc}, \hat\bualpha^{\rmc}) - 2C_1 \epsilon.
\end{aligned}
\]

\noindent{\bf Step 2. Cooperative population. } 
We now want to prove~\eqref{eq:MP-eps-Nash-C}. The proof boils down to showing that the costs are Lipschitz in the two distributions.  Here, there are no individual deviations because all the cooperative agents use the same control. The non-cooperative agents do not change their control in this part of the analysis. 

We prove the following result.
\begin{lemma}
\label{lem:MP-C-finite-meanfield-costs}
Under the assumptions of Theorem~\ref{thm:MP-eps-Nash}, there exists $N_0$ such that for all $N^\rmnc \ge N_0$ and $N^\rmc \ge N_0$: for any $\balpha^{\rmc}$, 
\begin{equation}
        |J^{\mixpop,\rmc}(\balpha^{\rmc}; \hat\balpha^{\rmnc})
        - 
        J^{\mixpop,N,\rmc}(\bualpha^{\rmc}; \hat\bualpha^{\rmnc})| \le C \epsilon^{\rmc}.
\end{equation}
\end{lemma}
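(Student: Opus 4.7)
The plan is to mirror the strategy used for the non-cooperative population in Step~1, adapting it to the cooperative side. I read $\bualpha^\rmc$ as the symmetric finite-population profile $(\balpha^\rmc,\dots,\balpha^\rmc)$ in which every cooperative agent employs the common control $\balpha^\rmc$; this is the natural setup needed for the MFC-to-finite comparison and aligns with a propagation-of-chaos-style argument. First I would reduce to controls with bounded energy: if $\EE\int_0^T \tfrac{1}{2}|\alpha^\rmc_t|^2 dt$ exceeds a suitably large $\kappa^\rmc$ depending only on $C_{F^\rmc}$, $C_{g^\rmc}$, and $T$, then the quadratic running cost forces both $J^{\mixpop,\rmc}(\balpha^\rmc;\hat\balpha^\rmnc)$ and $J^{\mixpop,N,\rmc}(\bualpha^\rmc;\hat\bualpha^\rmnc)$ to share a large common contribution from the energy term and their difference is trivially bounded. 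Otherwise I work under $\|\balpha^\rmc\|_{L^2}^2\le\kappa^\rmc$, together with the bound $\|\hat\balpha^\rmnc\|_{L^2}^2\le\kappa^\rmnc$ already obtained in Step~1.1.

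Under these energy bounds, the cooperative processes $X^{i,\rmc,\balpha^\rmc}$ are i.i.d.\ across $i$ with uniformly bounded second moment by Doob's inequality, and the same is true for the non-cooperative processes $X^{i,\rmnc,\hat\balpha^\rmnc}$. I would then reuse the tail estimate of Lemma~\ref{lem MI bd Xi out} to bound $\PP(\sup_{t\in[0,T]} |X^{i,\rmc,\balpha^\rmc}_t|>1/\sqrt{\epsilon^\rmc})$ and its non-cooperative analogue by $O(\sqrt{\epsilon^\rmc})$, and split $|J^{\mixpop,\rmc}-J^{\mixpop,N,\rmc}|$ according to whether each individual state stays below $1/\sqrt{\epsilon^\rmc}$ or not. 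On the exceptional events, boundedness of $F^\rmc$ and $g^\rmc$ contributes only $O(\sqrt{\epsilon^\rmc})$; on the complementary events, the joint Lipschitz continuity of $f^\rmc, g^\rmc$ in the two measure arguments reduces the problem to controlling $\EE[W_1(\mu^{N,\rmnc,\hat\balpha^\rmnc}_t,\mu^{\rmnc,\hat\balpha^\rmnc}_t)] + \EE[W_1(\mu^{N,\rmc,\balpha^\rmc}_t,\mu^{\balpha^\rmc}_t)]$, which vanishes at each $t$ via the Fournier--Guillin rate~\citep{fournier2015rate}.

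To upgrade the pointwise-in-$t$ convergence into a uniform statement on $[0,T]$, I would replay the H\"older-in-time argument at the end of the proof of Lemma~\ref{lem MP bd cost inside}: Lipschitz continuity of the costs in the measures, combined with the Cauchy--Schwarz bound on state increments induced by the $L^2$-bounded controls, yields a $\tfrac{1}{2}$-H\"older regularity in $t$ of the relevant quantities, so a single $N_0$ suffices via a covering argument. The main obstacle is uniformity over the infinite-dimensional argument $\balpha^\rmc$; this is precisely why the reduction to $\|\balpha^\rmc\|_{L^2}^2\le\kappa^\rmc$ is indispensable, as it produces a Fournier--Guillin rate depending only on $\kappa^\rmc$ rather than on the specific shape of $\balpha^\rmc$. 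Collecting all contributions yields the claimed bound with constant $C$ depending on $L_F, L_g, C_{F^\rmc}, C_{g^\rmc}, T, \sigma^\rmnc, \sigma^\rmc, \kappa^\rmnc, \kappa^\rmc$, and the Fournier--Guillin rate.
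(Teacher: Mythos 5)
Your proposal is correct in substance but follows a different route from the paper. The paper's proof exploits the special structure of the cooperative cost: since every cooperative agent uses (an i.i.d.\ copy of) the same control, the quadratic energy terms in $J^{\mixpop,N,\rmc}$ and $J^{\mixpop,\rmc}$ coincide exactly and cancel in the difference, and the remaining running and terminal costs can be rewritten as lifted functionals $\cF^\rmc(\mu^{\rmnc},\mu^{\rmc})=\langle\mu^\rmc, F^\rmc(\cdot,\mu^\rmnc,\mu^\rmc)\rangle$ and $\cG^\rmc$ evaluated at the empirical versus true measure pairs. The paper then shows $\cF^\rmc,\cG^\rmc$ are $W_1$-Lipschitz and concludes directly from $\EE[W_1(\mu^{N,\rmnc},\mu^{\rmnc})]+\EE[W_1(\mu^{N,\rmc},\mu^{\rmc})]\to 0$, with no truncation of the state. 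Your argument instead transplants the non-cooperative machinery (tail estimate on $\sup_t|X_t|$, splitting on the exceptional event, Lipschitz bound on the complement, then a H\"older-in-time argument to make $N_0$ uniform in $t$); this works, is just somewhat longer, and has the merit of being more explicit about two points the paper glosses over, namely the time-uniformity of the law-of-large-numbers step and the need for a moment bound (hence an $L^2$ bound on $\balpha^\rmc$) to get a Fournier--Guillin rate uniform over the admissible controls. One caveat on your energy reduction: when $\EE\int_0^T\frac12|\alpha^\rmc_t|^2dt>\kappa^\rmc$ the two costs do share the same (large) energy contribution, but their \emph{difference} is then only bounded by the constant $2(TC_{F^\rmc}+C_{g^\rmc})$, not by $C\epsilon^\rmc$; so the reduction does not literally prove the lemma for arbitrary $\balpha^\rmc$. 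It does, however, suffice for the only use of the lemma (establishing~\eqref{eq:MP-eps-Nash-C}, where large-energy deviations are trivially suboptimal), and the paper's own statement and proof carry the same implicit restriction, so this is a matter of phrasing rather than a genuine gap.
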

From here, we will use the inequality in Lemma~\ref{lem:MP-C-finite-meanfield-costs} for $\balpha^{\rmc}$ and $\hat\balpha^{\rmc}$, as well as the optimality of $\hat\balpha^{\rmc}$ in the mean field problem to conclude that: for any $\balpha^{\rmc}$,
\begin{equation}
        J^{\mixpop,N,\rmc}(\hat\bualpha^{\rmc}; \hat\bualpha^{\rmnc})
        \le 
        J^{\mixpop,N,\rmc}(\bualpha^{\rmc}; \hat\bualpha^{\rmnc}) + C \epsilon^{\rmc},
\end{equation}
so $\hat\bualpha^{\rmc}$ is approximately optimal for the cooperative population in the finite-agent game.

\begin{proof}[Proof of Lemma~\ref{lem:MP-C-finite-meanfield-costs}]
Let us introduce the shorthand notations:
\[
\begin{aligned}
    \cG^{\rmc}(\mu^{\rmnc}, \mu^{\rmc}) 
    &= \EE_{X^\rmc \sim \mu^{\rmc}}\left[ g^{\rmc}(X^\rmc, \mu^{\rmnc}, \mu^{\rmc} ) \right]
    \\
    &= \langle \mu^\rmc, g^{\rmc}(\cdot, \mu^{\rmnc}, \mu^{\rmc} ) \rangle.
\end{aligned}
\]
and likewise for $\cF^{\rmc}$ in terms of $F^{\rmnc}$. 

Then, we can rewrite the costs as:
\begin{align*}
    &J^{\mixpop,N,\rmc}(\bualpha^{\rmc};\bualpha^{\rmnc}) 
    \\
    &= \frac{1}{N^\rmc} \sum_{i=1}^{N^\rmc} 
    \EE \Big[\int_0^T f^{\rmc}(t, X_t^{i,\rmc, \balpha^{i,\rmc}}, \alpha^{i,\rmc}_t, \mu^{N,\rmnc, \bualpha^{\rmnc}}_t, \mu_t^{N,\rmc, \bualpha^{\rmc}})dt 
    \\
    &\qquad\qquad\qquad\qquad + g^{\rmc}(X_T^{i,\rmc,\balpha^{i,\rmc}}, \mu^{N,\rmnc,\bualpha^{\rmnc}}_T, \mu_T^{N,\rmc, \bualpha^{\rmc}})\Big]
    \\
    &= 
    \int_0^T \left[ \frac{1}{N^\rmc} \sum_{i=1}^{N^\rmc}  \frac{1}{2} \EE\left[ |\alpha^{i,\rmc}_t|^2 \right] 
    +
     \cF^{\rmc}(\mu^{N,\rmnc, \bualpha^{\rmnc}}_t, \mu_t^{N,\rmc, \bualpha^{\rmc}}) \right] dt
    \\
    &\qquad + \cG^{\rmc}(\mu^{N,\rmnc,\bualpha^{\rmnc}}_T, \mu_T^{N,\rmc, \bualpha^{\rmc}}),
\end{align*}
where for the control we used the fact that the trajectories are i.i.d. so $\EE\left[ |\alpha^{i,\rmc}_t|^2 \right] = \EE\left[ |\alpha^{\rmc}_t|^2 \right]$ for every $i$, 
and
\begin{align*}
    &J^{\mixpop,\rmc}(\balpha^{\rmc};\balpha^{\rmnc}) 
    \\
    &= 
    \EE \Big[\int_0^T f^{\rmc}(t, X_t^{\rmc, \balpha^{\rmc}}, \alpha^{\rmc}_t, \mu^{\rmnc, \balpha^{\rmnc}}_t, \mu_t^{\rmc, \balpha^{\rmc}})dt 
    \\
    &\qquad\qquad\qquad\qquad + g^{\rmc}(X_T^{\rmc,\balpha^{\rmc}}, \mu^{\rmnc,\balpha^{\rmnc}}_T, \mu_T^{\rmc, \balpha^{\rmc}})\Big]
     \\
    &= 
    \int_0^T \left[ \frac{1}{2} \EE\left[ |\alpha^{\rmc}_t|^2 \right] 
    +
     \cF^{\rmc}(\mu^{\rmnc, \bualpha^{\rmnc}}_t, \mu_t^{\rmc, \bualpha^{\rmc}}) \right] dt
    \\
    &\qquad + \cG^{\rmc}(\mu^{\rmnc,\bualpha^{\rmnc}}_T, \mu_T^{\rmc, \bualpha^{\rmc}}).
\end{align*}
We now bound the difference between the absolute value of the two costs.

We start with the terms involving $\cG^\rmc$. We note that $\cG^{\rmc}$ is Lipschitz continuous with respect to $W_1$ because:
\begin{align}
    &|\cG^{\rmc}(\mu^{\rmnc}_1, \mu^{\rmc}_1) - \cG^{\rmc}(\mu^{\rmnc}_2, \mu^{\rmc}_2)|
    \le |\langle \mu^\rmc_1 - \mu^\rmc_2, g^{\rmc}(\cdot, \mu^{\rmnc}_1, \mu^{\rmc}_1 ) \rangle|
    \\
    &\qquad\qquad + |\langle \mu^\rmc_2, g^{\rmc}(\cdot, \mu^{\rmnc}_1, \mu^{\rmc}_1 ) - g^{\rmc}(\cdot, \mu^{\rmnc}_2, \mu^{\rmc}_2 ) \rangle|
    \\
    & \le L_g W_1(\mu^\rmc_1, \mu^\rmc_2) + \langle \mu^\rmc_2, |g^{\rmc}(\cdot, \mu^{\rmnc}_1, \mu^{\rmc}_1 ) - g^{\rmc}(\cdot, \mu^{\rmnc}_2, \mu^{\rmc}_2 )| \rangle
    \\
    & \le L_g W_1(\mu^\rmc_1, \mu^\rmc_2) + \langle \mu^\rmc_2, |g^{\rmc}(\cdot, \mu^{\rmnc}_1, \mu^{\rmc}_1 ) - g^{\rmc}(\cdot, \mu^{\rmnc}_2, \mu^{\rmc}_2 )| \rangle
    \\
    & \le L_g W_1(\mu^\rmc_1, \mu^\rmc_2) + L_g \Big[ W_1(\mu^{\rmnc}_1, \mu^{\rmnc}_2) + W_1( \mu^{\rmc}_1, \mu^{\rmc}_2 ) \Big],
\end{align}
using the assumption that $g^\rmc$ is Lipschitz with respect to $x$ uniformly in $(\mu,\tilde\mu)$, and it is Lipschitz in $(\mu,\tilde\mu)$ (for the Wasserstein-1 distance) uniformly in $x$. We denote the Lipschitz constant of $\cG^\rmc$ by $L_\cG$.

Then, for the terminal cost, we have:
\begin{align*}
    &\left| \frac{1}{N^\rmc} \sum_{i=1}^{N^\rmc}  g^{\rmc}(X_T^{i,\rmc,\balpha^{i,\rmc}}, \mu^{N,\rmnc,\bualpha^{\rmnc}}_T, \mu_T^{N,\rmc, \bualpha^{\rmc}}) 
    - \EE\left[g^{\rmc}(X_T^{\rmc,\balpha^{\rmc}}, \mu^{\rmnc,\balpha^{\rmnc}}_T, \mu_T^{\rmc, \balpha^{\rmc}}) \right]
    \right|
    \\
    &= \left| \cG^{\rmc}(\mu^{N,\rmnc,\bualpha^{\rmnc}}_T, \mu_T^{N,\rmc, \bualpha^{\rmc}}) 
    - \cG^{\rmc}(\mu^{\rmnc,\balpha^{\rmnc}}_T, \mu_T^{\rmc, \balpha^{\rmc}}) 
    \right|
    \\
    & \le L_\cG \Big( W_1(\mu^{N,\rmnc,\bualpha^{\rmnc}}_T, \mu^{\rmnc,\balpha^{\rmnc}}_T) + W_1(\mu_T^{N,\rmc, \bualpha^{\rmc}}, \mu_T^{\rmc, \balpha^{\rmc}}) \Big)
    \\
    & \le L_\cG \epsilon'',
\end{align*}
where we used the Kantorovich-Rubinstein dual representation of the Wasserstein-1 distance for the penultimate inequality, and the last inequality is by the law of large numbers, because $\mu^{N,\rmnc,\bualpha^{\rmnc}}_T$ consists of $N$ i.i.d. samples with law $\mu^{\rmnc,\balpha^{\rmnc}}_T$, and similarly $\mu_T^{N,\rmc, \bualpha^{\rmc}}$ consists of $N$ i.i.d. samples with law $\mu_T^{\rmc, \balpha^{\rmc}}$.

We then proceed similarly for the running cost. 
Notice that $\cF^{\rmc}$ is Lipschitz continuous with respect to $W_1$ and we denote by $L_\cF$ it's Lipschitz constant. Then:
\begin{align}
    &\left| \frac{1}{N^\rmc} \sum_{i=1}^{N^\rmc}  F^{\rmc}(X_t^{i,\rmc,\balpha^{i,\rmc}}, \mu^{N,\rmnc,\bualpha^{\rmnc}}_t, \mu_t^{N,\rmc, \bualpha^{\rmc}}) 
    - \EE\left[F^{\rmc}(X_t^{\rmc,\balpha^{\rmc}}, \mu^{\rmnc,\balpha^{\rmnc}}_t, \mu_t^{\rmc, \balpha^{\rmc}}) \right]
    \right|
    \\
    &= \left| \cF^{\rmc}(\mu^{N,\rmnc,\bualpha^{\rmnc}}_t, \mu_t^{N,\rmc, \bualpha^{\rmc}}) 
    - \cF^{\rmc}(\mu^{\rmnc,\balpha^{\rmnc}}_t, \mu_t^{\rmc, \balpha^{\rmc}}) 
    \right|
    \\
    & \le L_\cF \Big( W_1(\mu^{N,\rmnc,\bualpha^{\rmnc}}_t, \mu^{\rmnc,\balpha^{\rmnc}}_t) + W_1(\mu_t^{N,\rmc, \bualpha^{\rmc}}, \mu_t^{\rmc, \balpha^{\rmc}}) \Big)
    \\
    & \le L_\cF \epsilon'',
\end{align}

Now we estimate:
\begin{align*}
        &|J^{\mixpop,\rmc}(\balpha^{\rmc}; \balpha^{\rmnc})
        - J^{\mixpop,N,\rmc}(\bualpha^{\rmc}; \bualpha^{\rmnc})| 
        \\
        &\le 
         \Big[ \Big| \frac{C}{N^\rmc} \sum_{i=1}^{N^\rmc} \EE[|\balpha^{i,\rmc} |^2] - \EE [|\balpha^{\rmc} | ] \Big| \Big] 
        \\
        &\quad + C 
        \EE \left[\int_0^T \Big|\cF^{\rmc}(t, \mu^{N,\rmnc, \bualpha^{\rmnc}}_t, \mu_t^{N,\rmc, \bualpha^{\rmc}}) - \cF^{\rmc}(t, \mu^{\rmnc, \bualpha^{\rmnc}}_t, \mu_t^{\rmc, \bualpha^{\rmc}})\Big|dt \right]
        \\
        &\qquad\qquad\qquad\qquad + \EE \Big[\Big|\cG^{\rmc}(\mu^{N,\rmnc,\bualpha^{\rmnc}}_T, \mu_T^{N,\rmc, \bualpha^{\rmc}}) - \cG^{\rmc}(\mu^{\rmnc,\bualpha^{\rmnc}}_T, \mu_T^{\rmc, \bualpha^{\rmc}})\Big| \Big],
        \\
        &\le L_\cF \epsilon'' + L_\cG \epsilon' = C \epsilon''',
\end{align*}
where, for the last inequality, we used the fact that the controls $\balpha^{i,\rmc}$ and $\balpha^{\rmc}$ are i.i.d. and the fact that $\cF^\rmc$ and $\cG^\rmc$ as Lipschitz continuous as discussed above. 
\end{proof}

\end{proof}

\subsection{Equilibrium characterization}
\label{app:MP-fbsde}

We start by stating in detail the Assumptions~{\bf SMP} and~{\bf Pontryagin Optimality}. In the MP setting, Assumption~{\bf SMP}  of~\cite[pp.161-162]{CarmonaDelarue_book_I} for the non-cooperative population states that:
\begin{itemize}
    \item The drift is an affine function of $(x,\alpha)$ of the form: $b^\rmnc(t,x,\alpha,\mu,\mu') = b_0^\rmnc(t,\mu,\mu') + b_1^\rmnc(t)x + b_2^\rmnc(t)\alpha$ where $b_0^\rmnc, b_1^\rmnc, b_2^\rmnc$ are measurable and bounded on bounded subsets of their respective domains, namely, $[0,T] \times \cP_2(\RR^d) \times \cP_2(\RR^d)$, $[0,T]$ and $[0,T]$.
    \item There exist constants $\lambda^\rmnc > 0$ and $L^\rmnc \ge 1$ such that the function $(x,\alpha) \mapsto f(t,x,\alpha,\mu,\mu')$  is once continuously differentiable with Lipschitz-continuous derivatives, the Lipschitz constant in $x$ and $\alpha$ being bounded by $L^\rmnc$. Moreover, it satisfies the following strong form of convexity: $f^\rmnc(t,\tilde x,\tilde\alpha,\mu,\mu') - f^\rmnc(t,x,\alpha,\mu,\mu') - (\tilde x-x, \tilde \alpha - \alpha) \cdot \partial_{(x,\alpha)}f^\rmnc(t,x,\alpha,\mu,\mu') \ge \lambda^\rmnc |\tilde\alpha - \alpha|^2$. Finally, $f^\rmnc, \partial_x f^\rmnc, \partial_\alpha f^\rmnc$ are locally bounded over $[0,T] \times \RR^d \times A \times \cP_2(\RR^d) \times \cP_2(\RR^d)$.
    \item The function $\RR^d \times \cP_2(\RR^d) \times \cP_2(\RR^d) \ni (x,\mu,\mu') \mapsto g^\rmnc(x,\mu,\mu')$ is locally bounded. Moreover, for any $\mu,\mu' \in \cP_2(\RR^d)$, the function $x \mapsto g^\rmnc(x,\mu,\mu')$ is once continuously differentiable and convex, and has a $L^\rmnc-$Lipschitz continuous first order derivative. 
\end{itemize}

In the MP setting, Assumption~{\bf Pontryagin Optimality} of~\cite[pp.542-543]{CarmonaDelarue_book_I} for the cooperative population states that:
\begin{itemize}
    \item The functions \( b^\rmc \) and \( f^\rmc \) are differentiable with respect to \( (x, \alpha) \), the mappings \( (x, \alpha, \mu, \mu') \mapsto \partial_x(b^\rmc, f^\rmc)(t, x, \alpha, \mu, \mu') \) and \( (x, \alpha, \mu, \mu') \mapsto \partial_\alpha(b^\rmc, f^\rmc)(t, x, \alpha, \mu, \mu') \) being continuous for each \( t \in [0, T] \). The functions \( b^\rmc \), and \( f^\rmc \) are also differentiable with respect to the variable \( \mu' \), the mapping \( (x, \alpha, \mu, X') \mapsto \partial_{\mu'}(b^\rmc, f^\rmc)(t, x, \alpha, \mu, \mathcal{L}(X'))(X') \) being continuous for each \( t \in [0, T] \). Similarly, the function \( g^\rmc \) is differentiable with respect to \( x \), the mapping \( (x, \mu, \mu') \mapsto \partial_x g^\rmc(x, \mu, \mu') \) being continuous. 
    The function \( g^\rmc \) is also differentiable with respect to the variable \( \mu' \), the mapping \( (x, \mu, X') \mapsto \partial_{\mu'} g^\rmc(x, \mu, \mathcal{L}(X'))(X') \) being continuous.
    \item The function \( [0, T] \ni t \mapsto (b^\rmc, f^\rmc)(t, 0, 0, \delta_0, \delta_0) \) is uniformly bounded. The derivatives \( \partial_x b^\rmc \) and \( \partial_\alpha b^\rmc \) are uniformly bounded and the mapping \( x' \mapsto \partial_{\mu'}b^\rmc(t, x, \alpha, \mu, \mu')(x') \) has an \( L^2(\mathbb{R}^d; \mu'; \mathbb{R}^d) \)-norm which is also uniformly bounded (i.e., uniformly in \( (t, x, \alpha, \mu, \mu') \)). There exists a constant \( L^\rmc \) such that, for any \( R \geq 0 \) and any \( (t, x, \alpha, \mu, \mu') \) such that \( |x|, |\alpha|, M_2(\mu), M_2(\mu') \leq R \), \( |\partial_x f^\rmc(t, x, \alpha, \mu, \mu')| \), \( |\partial_x g^\rmc(x, \mu)| \), and \( |\partial_{\mu'} f^\rmc(t, x, \alpha, \mu, \mu')(x')| \) are bounded by \( L^\rmc(1 + R) \) and the \( L^2(\mathbb{R}^d; \mu'; \mathbb{R}^d) \)-norms of \( x' \mapsto \partial_{\mu'} f^\rmc(x, \alpha, \mu, \mu')(x') \) and \( x' \mapsto \partial_{\mu'} g^\rmc(x, \mu, \mu')(x') \) are bounded by \( L^\rmc(1 + R) \).
\end{itemize}

For the sufficient condition, we also assume:
\begin{itemize}
    \item $(x,\mu,\mu') \mapsto g^\rmnc(x,\mu,\mu')$ and $(x,\mu,\mu') \mapsto g^\rmc(x,\mu,\mu')$ are convex;
    \item $(x,\alpha,\mu,\mu') \mapsto H^{\mixpop, \rmnc}(t, x,  \alpha, \mu, \mu', Y^{\rmnc}_t, Z^{\rmnc}_t)$ and $(x,\alpha,\mu,\mu') \mapsto H^{\mixpop, \rmc}(t, x,  \alpha, \mu, \mu', Y^{\rmc}_t, Z^{\rmc}_t)$ are convex $\mathrm{Leb}_1 \otimes \PP$-a.e..
\end{itemize}

\begin{proof}{Proof of Theorem~\ref{thm:MP-Pontryagin}}
    
    First, we consider the problem for the non-cooperative agents. $(\hat\balpha^\rmnc, \hat\bmu^\rmnc)$ can be viewed as the Nash equilibrium $(\hat\balpha, \hat\bmu)$ for an MFG for the non-cooperative population when the control of the cooperative agents is $\hat\balpha^{\rmc}$. In other words, it is an MFG in which, given the mean field $\hat\bmu$, a representative agent needs to find the minimizers $\hat\balpha$ to the cost $\balpha \mapsto J(\balpha; \hat\bmu) = J^\rmnc(\balpha; \hat\bmu, \hat\bmu^\rmc)$, and $\hat\bmu$ should coincide with $\bmu^{\hat\balpha}$. We recall that $J^\rmnc$ is defined in  see~\eqref{eq:mixedpop-mfg-cost-NC}. In this game, $\hat\bmu^\rmc$ simply enters as a fixed parameter. We can thus use the standard theory of MFGs. By~\cite[Proposition 3.23]{CarmonaDelarue_book_I}, the MFG equilibrium for the non-cooperative population is characterized by: 
    \[
        \hat\alpha^\rmnc_t = \argmin_\alpha H^{\mixpop, \rmnc}(t, X^\rmnc_t, \alpha, \hat\mu^\rmnc_t, \hat\mu^\rmc_t, Y^\rmnc_t, Z^\rmnc_t),
    \] 
    where:
    \begin{equation}
    \begin{cases}
        & dX^\rmnc_t = b^\rmnc(t, X^\rmnc_t, \hat{\alpha}^\rmnc_t, \hat\mu^\rmnc_t, \hat\mu^\rmc_t) dt + \sigma^\rmnc dW^\rmnc_t
        \\
        & dY^\rmnc_t = - \partial_x H^{\mixpop,\rmnc}(t, X^\rmnc_t, \hat{\alpha}^\rmnc_t, \hat\mu^\rmnc_t, \hat\mu^\rmc_t, Y^\rmnc_t, Z^\rmnc_t) dt \\
        &+ Z^\rmnc_t dW^\rmnc_t
        \\
        & X^\rmnc_0 \sim \mu^\rmnc_0, \qquad Y^\rmnc_T = g(X^\rmnc_T, \hat\mu^\rmnc_T, \hat\mu^\rmc_T),
    \end{cases}
    \end{equation}
    and $\hat\mu^\rmnc_t = \cL(X^\rmnc_t)$.

    On the other hand, the cooperative agents need to solve an MKV control problem which consists in finding the minimizer $\hat\balpha^\rmc$ of $\balpha \mapsto J(\balpha) = J^\rmc(\balpha; \hat\mu^\rmnc)$. By~\cite[Proposition 6.15 and Theorem 6.16]{CarmonaDelarue_book_I}, we obtain that the control $\hat{\balpha}^\rmc$ is optimal if and only if: 
    \[
        H^{\mixpop, \rmc}(t, X^\rmc_t, \hat{\alpha}^\rmc_t, \hat\mu^\rmnc_t, \hat\mu^\rmc_t, Y^\rmc_t, Z^\rmc_t) = \inf_\alpha H^{\mixpop, \rmc}(t, X^\rmc_t, \alpha, \hat\mu^\rmnc_t, \hat\mu^\rmc_t, Y^\rmc_t, Z^\rmc_t),
    \] 
    where:
    \begin{equation}
    \begin{cases}
        & dX^\rmc_t = b^\rmc(t, X^\rmc_t, \hat{\alpha}^\rmc_t, \hat\mu^\rmnc_t, \hat\mu^\rmc_t) dt + \sigma^\rmc dW^\rmc_t
        \\
        & dY^\rmc_t = \Big(- \partial_x H^{\mixpop,\rmc}(t, X^\rmc_t, \hat{\alpha}^\rmc_t, \hat\mu^\rmnc_t, \hat\mu^\rmc_t, Y^\rmc_t, Z^\rmc_t) \\
        & - \tilde\EE\left[\partial_{\mu^\rmc} H^{\mixpop,\rmc}(t, \tilde{X}^\rmc_t, \tilde{\hat{\alpha}}^\rmc_t, \hat\mu^\rmnc_t, \hat\mu^\rmc_t, \tilde{Y}^\rmc_t, \tilde{Z}^\rmc_t)(X^\rmc_t) \right] \Big) dt\\
        &+ Z^\rmc_t dW^\rmc_t
        \\
        & X^\rmc_0 \sim \mu^\rmc_0, \qquad Y^\rmc_T = g(X^\rmc_T, \hat\mu^\rmnc_T, \hat\mu^\rmc_T),
    \end{cases}
    \end{equation}
    and $\hat\mu^\rmc_t = \cL(X^\rmc_t)$.

    Combining the above two MKV FBSDEs yield the result. 
\end{proof}

\section{Proofs for fisher model}
\subsection{Existence and uniqueness in MI-MFG model}
\label{app:fisher_mi_proofs}

\begin{proof}{Proof of Theorem~\ref{the:existence_uniqueness_fisher_mi}} Using Corollary~\ref{cor:fisher_fbsde_mi}, showing existence and uniqueness of the mixed individual mean field Nash equilibrium is equivalent to showing the existence and uniqueness of the solution of the FBSDE introduced in Corollary~\ref{cor:fisher_fbsde_mi}. We will prove this in two steps:

\noindent \textbf{Step 1:} We take expectation of the FBSDE in Corollary~\ref{cor:fisher_fbsde_mi}, then use Banach Fixed Theorem to show that it has a unique solution for processes $(K_t, \overline{X_t}, \overline{Y}_t^1, \overline{Y}_t^2)_{t\in[0,T]}$. 
    
    \noindent\textbf{Step 2:} Given processes $(K_t, \overline{X_t}, \overline{Y}_t^1, \overline{Y}_t^2)_{t\in[0,T]}$, the FBSDE in Corollary~\ref{cor:fisher_fbsde_mi} becomes linear. We propose an ansatz and solve for $X_t, Y_t^1, Y_t^2$ given unique $K_t, \overline{X_t}, \overline{Y}_t^1, \overline{Y}_t^2$.
The details of the proof are as follows:

\noindent\textbf{Step 1. Showing existence and uniqueness of the mean processes using Banach Fixed Point Theorem.}

We take the expectation of the FBSDE in Corollary~\ref{cor:fisher_fbsde_mi} and write the following forward backward ordinary differential equation (FBODE) system for the mean processes:
\begin{equation*}
    \begin{aligned}
        dK_t &= \big[rK_t \left(1-K_t/\cK\right) - q K_t \overline{X}_t\big] dt\\
        d\overline{X}_t & = -\frac{\overline{Y}_t^2}{c_3}dt\\
        d\overline{Y}_t^1 &= -\Big[\big(r-\frac{2r}{\cK}K_t - q \overline{X}_t\big) \overline{Y}_t^1 + 2p_1 q K_t \overline{X}^2_t\Big] dt\\
        d\overline{Y}_t^2 &= -\Big[p_1 q (2-\lambda)K_t^2 \overline{X}_t +c_1+c_2 \overline{X}_t 
        - q(1-\lambda)K_t\overline{Y}^1_t\Big]dt\\
        K_0 &= \rho \cK, \quad \overline{X}_0 = x_0, \quad \overline{Y}_T^1=\overline{Y}_T^2=0 
    \end{aligned}
\end{equation*}

Our aim is to find a fixed point for the following mapping:
\begin{equation*}
\begin{aligned}
    \overline{X}^i_t 
    &\mapsto h_1(\overline{X}^i_t) = K^i_t \mapsto h_2(K^i_t) = \overline{Y}^{1,i}_t 
    \mapsto h_3(\overline{Y}^{1,i}_t) = \overline{Y}^{2,i}_t \mapsto h_4(\overline{Y}^{2,i}_t) = \check{\overline{X}}^{i}_t
\end{aligned}
\end{equation*}

We start with two processes $(\overline{X}_t^1)_{t\in[0,T]}$ and $(\overline{X}_t^2)_{t\in[0,T]}$ and we show that the mapping $h=h_4 \circ h_3 \circ h_2 \circ h_1$ is a contraction mapping, i.e.,
\begin{equation*}
    \lVert h(\overline{X}^1) - h(\overline{X}^2)\rVert_T \leq C \lVert \overline{X}^1 - \overline{X}^1\rVert_T 
\end{equation*}
with $C<1$ and $\lVert \overline{X} \rVert_T := \sup_{t\in[0,T]} \lVert \overline{X}_t \rVert$ where $\lVert \cdot \rVert$ is the $L_2$ norm.

We define $\widetilde{Z}_t = {Z}^1_t - {Z}^2_t$ where $Z_t \in \{\overline{X}_t, K_t, \overline{Y}^1_t, \overline{Y}^2_t, \check{\overline{X}}_t\}$.

\noindent\textbf{Step 1.1. Showing mapping $h_1$ is contraction.}

\begin{equation*}
    \begin{aligned}
        d\lVert \widetilde{K}_t\rVert^2 
        &=2 \widetilde{K}_t  d  \widetilde{K}_t\\
        &= 2 \widetilde{K}_t \big[\big(r \widetilde{K}_t -(r/\cK) \widetilde{K}_t (K_t^1+K_t^2) 
        - q\big(K_t^1 \widetilde{\overline{X}}_t + \overline{X}_t^2  \widetilde{K}_t\big)\big)\big] dt,
    \end{aligned}
\end{equation*}
where we added and subtracted $K_t^1\overline{X}_t^2$ to conclude. 

Then, we have:
\begin{equation*}
    \begin{aligned}
        &\lVert \widetilde{K}_t\rVert^2 
        \\
        &\leq 2 \int_0^t r \lVert\widetilde{K}_s\rVert^2 -\frac{2r \lVert K\rVert_T}{\cK} \lVert\widetilde{K}_s\rVert^2
        -q\lVert \overline{X}\rVert_T\lVert\widetilde{K}_s\rVert^2-q \lVert K\rVert_T<\widetilde{K}_s,\widetilde{\overline{X}}_s> ds
        \\
        &\leq \exp\Big(2t \big(r-\frac{2r \lVert K\rVert_T}{\cK}- (q/2\lVert K\rVert_T+q\lVert \overline{X}\rVert_T)
        \big)\Big) q\lVert K\rVert_T\int_0^t \lVert\widetilde{\overline{X}}_s\rVert^2 ds
        \\
        &\leq C^{(1)} \int_0^T \lVert\widetilde{\overline{X}}_s\rVert^2 ds
    \end{aligned}
\end{equation*}
$C^{(1)} = \exp\big(2T(r+q\overline{X}_{max})\big) q\cK$, where we denote the uniform bound on process $\overline{\bX}$ by $\overline{X}_{max}$. Above, in the second inequality, we use geometric-arithmetic mean inequality and Gr\"onwall inequality.

\noindent\textbf{Step 1.2. Showing mapping $h_2$ is contraction.}
\begin{equation*}
    \begin{aligned}
        &d\lVert \widetilde{\overline{Y}}^{1}_t\rVert^2 
        =2 \widetilde{\overline{Y}}^{1}_t  d  \widetilde{\overline{Y}}^{1}_t
        \\
        &=- 2 \widetilde{\overline{Y}}^{1}_t \big[r \widetilde{\overline{Y}}_t^1 - \frac{2r}{K} (K_t^1 \widetilde{\overline{Y}}_t^{1}+ \overline{Y}_t^{1,2} \widetilde{K}_t )
        -q(\overline{X}_t^1 \widetilde{\overline{Y}}_t^{1} + \overline{Y}_t^{1,2}\widetilde{\overline{X}}_t) 
        \\
        &\qquad\qquad 
        + 2p_1 q (K_t^1 (\overline{X}_t^1+\overline{X}_t^2)\widetilde{\overline{X}}_t+ \widetilde{K}_t (\overline{X}_t^2)^2)\big]dt,
    \end{aligned}
\end{equation*}
where we added and subtracted $K_t^1 \overline{Y}_t^{1,2}$, $\overline{X}_t^1 \overline{Y}_t^{1,2}$, and $K_t^1 (\overline{X}_t^2)^2$ to conclude.

Then, we have:
\begin{equation*}
    \begin{aligned}
        &\lVert \widetilde{\overline{Y}}^1_t\rVert^2 
        \\
        &\leq 2 \int_t^T r \lVert \widetilde{\overline{Y}}^1_s\rVert^2 +\frac{2r\lVert K\rVert_T}{\cK} \lVert \widetilde{\overline{Y}}^1_s\rVert^2 +\frac{2r}{\cK} \lVert \overline{Y}^1\rVert_T<\widetilde{\overline{Y}}^1_s,\widetilde{K}_s> 
        \\
        &\qquad 
        + q\lVert \overline{X}\rVert_T \lVert \widetilde{\overline{Y}}^1_s\rVert^2 + q \lVert \overline{Y}^1\rVert_T<\widetilde{\overline{Y}}^1_s,\widetilde{\overline{X}}_s>\\
        &\qquad+4p_1 q \lVert \overline{X}\rVert_T \lVert K\rVert_T <\widetilde{\overline{Y}}^1_s,\widetilde{\overline{X}}_s>
        +2p_1 q \lVert \overline{X}\rVert_T^2 <\widetilde{\overline{Y}}^1_s,\widetilde{K}_s> ds\\
        &\leq \exp\Big(2t\Big( r + \big(\dfrac{r}{\cK}+\frac{q}{2} \big)\lVert \overline{Y}^1\rVert_T + \frac{2r}{\cK} \lVert K\rVert_T
        + 2p_1 q \lVert K\rVert_T \lVert \overline{X}\rVert_T+ q \lVert \overline{X}\rVert_T + p_1 q \lVert \overline{X}\rVert_T^2 \Big)\Big) \times\\
        &\hskip7mm \Big(\big(\frac{r}{\cK}\lVert \overline{Y}^1\rVert_T + p_1 q \lVert \overline{X}\rVert^2_T\big)T C^{(1)} 
        +\big(\frac{q}{2}\lVert \overline{Y}^1\rVert_T + 2p_1 q \lVert \overline{X}\rVert_T \lVert K \rVert_T\big)\Big) \int_0^T \lVert\widetilde{\overline{X}}_s\rVert^2 ds\\
        =& C^{(2)} \int_0^T \lVert\widetilde{\overline{X}}_s\rVert^2 ds,
    \end{aligned}
\end{equation*}  

where 
\begin{equation*}
    \begin{aligned}
        C^{(2)}=&\exp\Bigg(2T\Big( r + \big(\frac{r}{\cK}+\frac{q}{2} \big) \overline{Y}^1_{max} + 2r
        + 2p_1 q \cK \overline{X}_{max} + q  \overline{X}_{max} + p_1 q  \overline{X}_{max}^2 \Big)\Bigg) \times\\
        &\Big(\big(\frac{r}{\cK} \overline{Y}^1_{max} + p_1 q  \overline{X}^2_{max}\big)T C^{(1)} 
        +\big(\frac{q}{2} \overline{Y}^1_{max} + 2p_1 q  \overline{X}_{max}  \cK\big)\Big),
    \end{aligned}
\end{equation*} 
where we used Gr\"onwall inequality to conclude. We denote the uniform bounds on processes on $\overline{\bX}$ and $\overline{\bY}^1$ with $\overline{X}_{max}$ and $\overline{Y}^1_{max}$, respectively.

\noindent\textbf{Step 1.3. Showing mapping $h_3$ is contraction.}

\begin{equation*}
    \begin{aligned}
        d\lVert \widetilde{\overline{Y}}^{2}_t\rVert^2 &=2 \widetilde{\overline{Y}}^{2}_t  d  \widetilde{\overline{Y}}^{2}_t\\
        & = -2 \widetilde{\overline{Y}}^{2}_t \big[p_1 q (2-\lambda)\big((K_t^1)^2 \widetilde{\overline{X}}_t + \widetilde{K}_t (K_t^1 +K_t^2) \overline{X}^2_t\big) \\
        &\quad+c_2\widetilde{\overline{X}}_t  -q(1-\lambda)\big(K_t^1\widetilde{\overline{Y}}_t^{1}+\widetilde{K}_t\overline{Y}_t^{1,2}\big) \big] dt,
    \end{aligned}
\end{equation*}
where we concluded by adding and subtracting $(K_t^1)^2\overline{X}_t^2 $, and $K_t^1 \overline{Y}_t^{1,2}$.

Then, by using Gr\"onwall inequality, we can write:
\begin{equation*}
    \begin{aligned}
        &\lVert \widetilde{\overline{Y}}^2_t\rVert^2 
        \\
        &\leq2 \int_t^T p_1 q (2-\lambda) \lVert K \rVert_T^2 <\widetilde{\overline{Y}}^2_s,\widetilde{\overline{X}}_s> 
        + 2 p_1 q (2-\lambda) \lVert K \rVert_T \lVert \overline{X}\rVert_T  <\widetilde{\overline{Y}}^2_s,\widetilde{K}_s>
        \\
        & 
        + c_2 <\widetilde{\overline{Y}}^2_s,\widetilde{\overline{X}}_s> +q (1-\lambda) \lVert K \rVert_T <\widetilde{\overline{Y}}^2_s,\widetilde{\overline{Y}}^1_s>
        +q (1-\lambda) \lVert \overline{Y}^1\rVert_T<\widetilde{\overline{Y}}^2_s,\widetilde{K}_s> ds\\
        &\leq \exp(tq (1-\lambda) \lVert K \rVert_T) \Big(T C^{(1)}\big( 2p_1 q (2-\lambda) \lVert K\rVert_T \lVert \overline{X}\rVert_T         \\
        & + q(1-\lambda) \lVert \overline{Y}^1\rVert_T\big) + \big(p_1 q (2-\lambda)\lVert K\rVert_T^2 + c_2\big)\\
        & + TC^{(2)} \big(p_1 q(2-\lambda) \lVert K\rVert_T^2 + 2p_1 q (2-\lambda)\lVert K\rVert_T \lVert \overline{X}\rVert_T +c_2\\
        &+ q(1-\lambda) \lVert K\rVert_T + q(1-\lambda) \lVert \overline{Y}^1\rVert_T)\big)\Big) \int_0^T \lVert \widetilde{\overline{X}}_s \rVert^2  ds\\
        &= C^{(3)} \int_0^T \lVert\widetilde{\overline{X}}_s\rVert^2 ds,
    \end{aligned}
\end{equation*}  
where 
\begin{equation*}
    \begin{aligned}
    C^{(3)}&= \exp(Tq (1-\lambda) \cK) \Big(T C^{(1)}\big( 2p_1 q (2-\lambda) \cK  \overline{X}_{max}        
        + q(1-\lambda)  \overline{Y}^1_{max}\big) + \big(p_1 q (2-\lambda)\cK^2 + c_2\big)\\
        &+ TC^{(2)} \big(p_1 q(2-\lambda) \cK^2 + 2p_1 q (2-\lambda)\cK \overline{X}_{max}         
        +c_2+ q(1-\lambda) \cK+ q(1-\lambda) \overline{Y}^1_{max})\big)\Big). 
    \end{aligned}
\end{equation*}

\noindent\textbf{Step 1.4. Showing mapping $h_4$ is contraction.}

\begin{equation*}
    \begin{aligned}
        d\lVert \widetilde{\overline{X}}^{\prime}_t\rVert^2 =2 \widetilde{\overline{X}}^{\prime}_t  d  \widetilde{\overline{X}}^{\prime}_t
    \end{aligned}
\end{equation*}

Then, we have:

\begin{equation*}
    \begin{aligned}
        \lVert \widetilde{\overline{X}}^{\prime}_t\rVert^2 &=  \frac{2}{c_3} \int_0^t < \widetilde{\overline{X}}^{\prime}_s,\widetilde{\overline{Y}}_s^2> ds\\
        &\leq \frac{1}{c_3} \int_0^t (\lVert \widetilde{\overline{X}}^{\prime}_s\rVert^2 +\lVert\widetilde{\overline{Y}}_s^2\rVert^2) ds\\
        &\leq \exp(t/c_3) 1/c_3  \int_0^t \lVert\widetilde{\overline{Y}}_s^2\rVert^2 ds\\
        &\leq  \exp(t/c_3) 1/c_3\int_0^t C^{(3)} \int_0^T \lVert\widetilde{\overline{X}}_s\rVert^2 ds \\
        &\leq C^{(4)} \int_0^T \lVert\widetilde{\overline{X}}_s\rVert^2 ds
    \end{aligned}
\end{equation*}
where $C^{(4)} =  \exp(T/c_3)   C^{(3)}T/c_3$.

Then we can conclude that:
\begin{equation*}
    \lVert \widetilde{\overline{X}}^{\prime}\rVert_T \leq C^{(4)}T \lVert \widetilde{\overline{X}}\rVert_T.
\end{equation*}
Under small terminal time $T$, we have $C^{(4)}T<1$ which shows that mapping $h$ is contraction. By using Banach fixed point theorem, we can conclude that there exists unique mean process flows $(\overline{\bX}, \boldsymbol{K}, \overline{\bY}^1, \overline{\bY}^2)$. Since $\overline{\hat{\alpha}}_t = -\overline{Y}_t^2/ c_3$, we also conclude existence and uniqueness for the mean equilibrium control process $\overline{\hat{\balpha}}$.
\vskip3mm
\noindent \textbf{Step 2: Showing existence and uniqueness of the state and adjoint processes in the linear form.}

In \textbf{Step 1}, we showed that under small time assumption, mean processes exist and unique. Given mean processes $\overline{\bX}, \boldsymbol{K}, \overline{\bY}^1, \overline{\bY}^2$, then the FBSDE that characterizes the MI mean field Nash equilibrium is coupled linearly. Firstly, we realize that the process $\boldsymbol{K}$ is deterministic and already proven to exist uniquely. Then we focus only on the FBSDE system regarding $\bX, \bY^1, \bY^2$. We see that given the mean processes the dynamics for $\bX$ and $\bY^2$ are linearly coupled. Therefore, we first start by proposing a linear ansatz for $\bY^2$ such that $Y_t^2 = A_t X_t +B_t$ where $(A_t, B_t)_{t\in[0,T]}$ are deterministic functions of time. After taking derivative of this ansatz and plugging it in the equations and matching the terms, we conclude that:
\begin{equation*}
    \begin{aligned}
        &\dot{A}_t -\frac{A_t^2}{c_3} + c_2 =0,\\
        &\dot{B}_t -\frac{A_tB_t}{c_3} + p_1q(2-\lambda) K_t^2 \overline{X}_t +c_1 - q(1-\lambda)K_t \overline{Y}_t^1=0,\\
         &A_T=0, \qquad B_T =0
    \end{aligned}
\end{equation*}

Above, the first equation is a scalar Riccati equation in which there exists a unique (and continuous) solution in the following form:
\begin{equation*}
    A_t =\frac{-c_2 \exp(2\sqrt{c_2/ c_3}(T-t))-1}{- \sqrt{c_2/ c_3}\exp(2\sqrt{c_2/ c_3}(T-t))- \sqrt{c_2/ c_3}}.
\end{equation*}
The second equation is a linear ODE with time dependent coefficients. Since $\bK, \overline{\bX}, \overline{\bY}^1, \bA$ are continuous and bounded, 
the linear ODE has a unique (and continuous) solution.

By plugging in the ansatz for $\bY^2$ (i.e., $Y_t^2 = A_t X_t +B_t$) in the SDE for $\bX$ we can write
\begin{equation*}
    dX_t = -\frac{A_t X_t + B_t}{c_3} dt +\sigma dW_t.
\end{equation*}
Since the drift and volatility terms are Lipschitz continuous in $x$ and they satisfy the linear growth condition in $x$, there exist a unique strong solution.

Finally for solving for $\bY^1$, we assume a ansatz for $\bY^1$ such that $Y_t^1 = C_t X_t + D_t$ where $(C_t, D_t)_{t\in[0,T]}$ are deterministic functions of time. After taking derivative of the ansatz and plugging it in the equations and matching the terms, we conclude that:
\begin{equation*}
    \begin{aligned}
        &\dot{C}_t +\big(r-\frac{2r}{\cK}K_t -q\overline{X}_t - \frac{A_t}{c_3}\big) C_t + 2p_1 q K_t \overline{X}_t = 0, && C_T=0\\
        &\dot{D}_t +\big(r-\frac{2r}{\cK}K_t -q\overline{X}_t \big) D_t -\frac{B_t C_t}{c_3}=0, && D_T=0
    \end{aligned}
\end{equation*}
The equations above are linear ODEs with time dependent coefficients. Since $\bK, \overline{\bX}, \bA$ are continuous, the first linear ODE has a unique (and continuous) solution. Since $\bK, \overline{\bX}, \boldsymbol{B}, \boldsymbol{C}$ are continuous, the second linear ODE also has a unique (and continuous) solution. 

This concludes that there exists a unique MI mean field Nash equilibrium for the fisher problem. 

\end{proof}

\subsection{Existence and uniqueness in MP-MFG model}
\label{app:fisher_mp_proofs}

\begin{proof}{Proof of Theorem~\ref{the:existence_uniqueness_fisher_mp}}

The proof follows similar ideas to the proof of Theorem~\ref{the:existence_uniqueness_fisher_mi}. This means, we first show the existence and uniqueness of the mean processes using Banach fixed point theorem. Then given the mean processes (including the common pool resource process), the FBSDE system becomes linear and we can propose linear ansatz for the adjoint processes. 

\noindent\textbf{Step 1. Showing existence and uniqueness of the mean processes using Banach Fixed Point Theorem.}

We take the expectation of the FBSDE system to write the FBODE system for the mean processes:
\begin{equation*}
\label{eq:FBODE_meanprocess_mixedpop}
    \begin{aligned}
        dK_t &= \big[rK_t \left(1-K_t/\cK\right) - q K_t (p\overline{X}^{\rmnc}_t + (1-p)\overline{X}^{\rmc}_t)\big] dt\\
        d\overline X^{\rmnc}_t & = -\frac{\overline Y_t^{2,\rmnc}}{c^{\rmnc}_3}dt \\
        d\overline X^{\rmc}_t & = -\frac{\overline Y_t^{2,\rmc}}{c^{\rmc}_3}dt \\
        d\overline Y_t^{1,\rmnc} &= -\Big[\big(r-\frac{2r}{\cK}K_t - q (p\overline{X}^{\rmnc}_t + (1-p)\overline{X}^{\rmc}_t)\big) \overline Y_t^{1,\rmnc} + 2p_1 q K_t (p\overline{X}^{\rmnc}_t + (1-p)\overline{X}^{\rmc}_t) \overline X^{\rmnc}_t{-p_0\overline X_t^{\rmnc}}\Big] dt \\
        d\overline Y_t^{2,\rmnc} &= -\Big[{ -p_0 K_t} + p_1 q K_t^2 (p\overline{X}^{\rmnc}_t + (1-p)\overline{X}^{\rmc}_t) +c^{\rmnc}_1+c^{\rmnc}_2 \overline X^{\rmnc}_t \Big]dt\\
        d\overline Y_t^{1,\rmc} &= -\Big[\big(r-\frac{2r}{\cK}K_t - q (p\overline{X}^{\rmnc}_t + (1-p)\overline{X}^{\rmc}_t)\big) \overline Y_t^{1,\rmc} + 2p_1 q K_t (p\overline{X}^{\rmnc}_t + (1-p)\overline{X}^{\rmc}_t) \overline X^{\rmc}_t{-p_0 \overline X_t^{\rmc}}\Big] dt \\
        d\overline Y_t^{2,\rmc} &= -\Big[{ -p_0 K_t}+p_1 q K_t^2 (p\overline{X}^{\rmnc}_t + (1-p)\overline{X}^{\rmc}_t) +c^{\rmc}_1+c^{\rmc}_2 \overline X^{\rmc}_t -qK_t(1-p)\overline{Y}_t^{1,\rmc} + p_1qK_t^2(1-p)\overline{X}^{\rmc}_t\Big]dt\\
        K_0 &= \rho \cK, \quad \overline X^{\rmnc}_0 = \overline\mu^{\rmnc}_0,\ \overline X^{\rmc}_0 = \overline \mu^{\rmc}_0, \quad \overline Y_T^{1,\rmnc}=\overline Y_T^{2,\rmnc}=\overline Y_T^{1,\rmc}=\overline Y_T^{2,\rmc}=0 
    \end{aligned}
\end{equation*}

Our aim is to find a fixed point for the following mapping:
\begin{equation*}
    \overline{X}^i_t \mapsto h_1(\overline{X}^i_t) = K^i_t \mapsto h_2(K^i_t) = \overline{Y}^{1,i}_t \mapsto h_3(\overline{Y}^{1,i}_t) = \overline{Y}^{2,i}_t \mapsto h_4(\overline{Y}^{2,i}_t) = \check{\overline{X}}^{i}_t.
\end{equation*}

Differently from the proof of Theorem~\ref{the:existence_uniqueness_fisher_mi}, we define the vector $\overline Z_t = (\overline Z_t^{\rmnc}, \overline Z_t^{\rmc})^\top$ where $\overline{Z}_t \in\{\overline{X}_t, \overline{Y}^1_t, \overline{Y}_t^2, \check{\overline{X}}_t\}$, the remaining of the proof follows similarly.

We start with two processes $(\overline{X}_t^1)_{t\in[0,T]}$ and $(\overline{X}_t^2)_{t\in[0,T]}$ and we show that the mapping $h=h_4 \circ h_3 \circ h_2 \circ h_1$ is a contraction mapping, i.e.,
\begin{equation*}
    \lVert h(\overline{X}^1) - h(\overline{X}^2)\rVert_T \leq C \lVert \overline{X}^1 - \overline{X}^1\rVert_T 
\end{equation*}
with $C<1$ and $\lVert \overline{X} \rVert_T := \sup_{t\in[0,T]} \lVert \overline{X}_t \rVert$ where $\lVert \cdot \rVert$ is the $L_2$ norm. As before we define $\widetilde{Z}_t = {Z}^1_t - {Z}^2_t$ where $Z_t \in \{\overline{X}_t, K_t, \overline{Y}^1_t, \overline{Y}^2_t, \check{\overline{X}}_t\}$.

\noindent\textbf{Step 1.1. Bounding the mapping $h_1$.}

\begin{equation*}
    \begin{aligned}
        d\lVert \widetilde{K}_t\rVert^2 &=2  \widetilde{K}_t  d  \widetilde{K}_t\\
        &= 2 \widetilde{K}_t \big[r \widetilde{K}_t -(r/\cK) \widetilde{K}_t (K_t^1+K_t^2) - qp\big(K_t^1 \widetilde{\overline{X}}^{\rmnc}_t + \overline{X}_t^{\rmnc,2}  \widetilde{K}_t\big) \\
        &\hskip1.5cm -q(1-p) \big(K_t^1 \widetilde{\overline{X}}^{\rmc}_t + \overline{X}_t^{\rmc,2}  \widetilde{K}_t\big) \big] dt
    \end{aligned}
\end{equation*}
where we added and subtracted $qpK_t^1\overline{X}_t^{\rmnc,2}$ and $q(1-p)K_t^1\overline{X}_t^{\rmc,2}$ to conclude. 

Then, we have:
\begin{equation*}
    \begin{aligned}
        \lVert \widetilde{K}_t\rVert^2 
        &\leq 2\int_0^t \Big(\big\lVert r-\dfrac{2r\lVert K\rVert_T}{\cK}-qp \lVert \overline{X}^{\rmnc}\rVert_T -q(1-p) \lVert \overline{X}^{\rmc}\rVert_T\big\rVert\lVert\widetilde{K}_s\rVert^2 \\
        &\hskip1.5cm+ \big\lVert qp\lVert K\rVert_T \big\rVert<\widetilde{K}_s,\widetilde{\overline{X}}^{\rmnc}_s> + \big\lVert q(1-p)\lVert K\rVert_T \big\rVert<\widetilde{K}_s,\widetilde{\overline{X}}^{\rmc}_s> \Big) ds
        \\
        \\
        &\leq \exp\Big(2t \big(\big\lVert r-\dfrac{2r\lVert K\rVert_T}{\cK}-qp \lVert \overline{X}^{\rmnc}\rVert_T -q(1-p) \lVert \overline{X}^{\rmc}\rVert_T\big\rVert + \frac{q  \lVert K\rVert_T}{2} \big)\Big) q  \lVert K\rVert_T \int_0^t \lVert\widetilde{\overline{X}}_s\rVert^2 ds\\
        &\leq C^{(1)} \int_0^T \lVert\widetilde{\overline{X}}_s\rVert^2 ds
    \end{aligned}
\end{equation*}
where $C^{(1)} = \exp\Big(2T \big(\big\lVert r+q \overline{X}_{max} \big\rVert + \frac{q  \cK}{2} \big)\Big) q  \cK$ where we assume the processes $\overline{\bX}^{\rmnc}$ and $\overline{\bX}^{\rmc}$ are uniformly bounded by $\overline{X}_{max}$. In the second inequality, Gr\"onwall inequality is used.

\noindent\textbf{Step 1.2. Bounding the mapping $h_2$.}

We first start by reminding the notations that will be used in this section. 
We define $\widetilde {\overline Y}_t^{1, \rmnc} = {\overline Y}_t^{1, \rmnc, 1} - {\overline Y}_t^{1, \rmnc, 2}$ (and similarly we define $\widetilde {\overline Y}_t^{1, \rmc}$). Then the vector $\widetilde{\overline Y}_t^1 = (\widetilde {\overline Y}_t^{1, \rmnc}, \widetilde {\overline Y}_t^{1, \rmc})^{\top} = \overline{Y}_t^{1,1} - \overline{Y}_t^{1,2}$ where $\overline{Y}_t^{1,i} = (\overline{Y}_t^{1, \rmnc, i}, \overline{Y}_t^{1, \rmc, i})^{\top}$ for $i=1,2$. 

Then we have:
\begin{equation*}
    \begin{aligned}
        d\lVert \widetilde{\overline{Y}}^{1}_t\rVert^2 &=2 (\widetilde{\overline{Y}}^{1}_t)^{\top}  d  \widetilde{\overline{Y}}^{1}_t\\
        &=-2 \widetilde{\overline{Y}}^{1, \rmnc}_t \Big[r \widetilde{\overline{Y}}_t^{1,\rmnc} - \frac{2r}{\cK} (K_t^1 \widetilde{\overline{Y}}_t^{1,\rmnc}+  \overline{Y}_t^{1,\rmnc, 2}\widetilde{K}_t)- qp (\overline{X}_t^{\rmnc,1}\widetilde{\overline{Y}}_t^{1,\rmnc} + \overline{Y}_t^{1,\rmnc,2}\widetilde{\overline{X}}_t^{\rmnc})\\
        &\hskip0.5cm - q(1-p) (\overline{X}_t^{\rmc,1}\widetilde{\overline{Y}}_t^{1,\rmnc} + \overline{Y}_t^{1,\rmnc,2}\widetilde{\overline{X}}_t^{\rmc}) +2p_1qp\big(K_t^1 (\overline{X}_t^{\rmnc,1} + \overline{X}_t^{\rmnc,2})\widetilde{\overline{X}}_t^{\rmnc}+\tilde{K}_t (\overline{X}_t^{\rmnc,2})^2\big)\\
        &\hskip0.5cm +2p_1q(1-p)\big( K_t^1(\overline{X}_t^{\rmc,1} \widetilde{\overline{X}}_t^{\rmnc} + \overline{X}_t^{\rmnc,2}\widetilde{\overline{X}}_t^{\rmc})+\widetilde{K}_t \overline{X}_t^{\rmc,2} \overline{X}_t^{\rmnc,2})\big)\Big]dt
        \\
        &\hskip0.5cm -2 \widetilde{\overline{Y}}^{1, \rmc}_t \Big[r \widetilde{\overline{Y}}_t^{1,\rmc} - \frac{2r}{\cK} (K_t^1 \widetilde{\overline{Y}}_t^{1,\rmc}+  \overline{Y}_t^{1,\rmc, 2}\widetilde{K}_t)- qp (\overline{X}_t^{\rmnc,1}\widetilde{\overline{Y}}_t^{1,\rmc} + \overline{Y}_t^{1,\rmc,2}\widetilde{\overline{X}}_t^{\rmnc})\\
        &\hskip0.5cm - q(1-p) (\overline{X}_t^{\rmc,1}\widetilde{\overline{Y}}_t^{1,\rmc} + \overline{Y}_t^{1,\rmc,2}\widetilde{\overline{X}}_t^{\rmc}) +2p_1qp\big(K_t^1(\overline{X}_t^{\rmc,2} \widetilde{\overline{X}}_t^{\rmnc} + \overline{X}_t^{\rmnc,1}\widetilde{\overline{X}}_t^{\rmc})+\widetilde{K}_t \overline{X}_t^{\rmc,2} \overline{X}_t^{\rmnc,2})\big)\\
        &\hskip0.5cm +2p_1q(1-p)\big( K_t^1 (\overline{X}_t^{\rmc,1} + \overline{X}_t^{\rmc,2})\widetilde{\overline{X}}_t^{\rmc} +\widetilde{K}_t (\overline{X}_t^{\rmc,2})^2\big)\Big]dt
    \end{aligned}
\end{equation*}

To conclude the equation above, we added and subtracted the interaction terms. Then, we have after using the inequality arithmetic-geometric mean inequality:
\begin{equation*}
    \begin{aligned}
        \lVert \widetilde{\overline{Y}}^{1}_t\rVert^2 &\leq 2\int_t^T \Big[\Big(\big\lVert r-\dfrac{2r\lVert K\rVert_T}{\cK}-qp \lVert \overline{X}^{\rmnc}\rVert_T -q(1-p) \lVert \overline{X}^{\rmc}\rVert_T\big\rVert\\
        &\hskip1cm+\big\lVert p_1 q\lVert \overline{X}^{\rmnc}\rVert_T (p\lVert \overline{X}^{\rmnc}\rVert_T+ (1-p)\lVert \overline{X}^{\rmc}\rVert_T) -\dfrac{r}{\cK}\lVert \overline{Y}^{1,\rmnc}\rVert_T\big\rVert\\
        &\hskip1cm +\big\lVert p_1 q \lVert K\rVert_T (2p\lVert \overline{X}^{\rmnc}\rVert_T+ (1-p)\lVert \overline{X}^{\rmc}\rVert_T) -0.5qp\lVert \overline{Y}^{1,\rmnc}\rVert_T\big\rVert
        \\
        &\hskip1cm +\big\lVert p_1 q \lVert K\rVert_T (1-p)\lVert \overline{X}^{\rmnc}\rVert_T-0.5q(1-p)\lVert \overline{Y}^{1,\rmnc}\rVert_T\big\rVert \Big) \lVert \widetilde{\overline{Y}}^{1,\rmnc}_s\rVert^2 \\
        &\hskip0.5cm+\Big(\big\lVert r-\dfrac{2r\lVert K\rVert_T}{\cK}-qp \lVert \overline{X}^{\rmnc}\rVert_T -q(1-p) \lVert \overline{X}^{\rmc}\rVert_T\big\rVert\\
        &\hskip1cm+\big\lVert p_1 q\lVert \overline{X}^{\rmc}\rVert_T (p\lVert \overline{X}^{\rmnc}\rVert_T+ (1-p)\lVert \overline{X}^{\rmc}\rVert_T) -\dfrac{r}{K}\lVert \overline{Y}^{1,\rmc}\rVert_T\big\rVert\\
        &\hskip1cm +\big\lVert p_1 q \lVert K\rVert_T p\lVert \overline{X}^{\rmc}\rVert_T -0.5qp\lVert \overline{Y}^{1,\rmc}\rVert_T\big\rVert
        \\
        &\hskip1cm +\big\lVert p_1 q \lVert K\rVert_T ((p\lVert \overline{X}^{\rmnc}\rVert_T+(1-p)\lVert \overline{X}^{\rmc}\rVert_T)-0.5q(1-p)\lVert \overline{Y}^{1,\rmc}\rVert_T\big\rVert\Big)\lVert \widetilde{\overline{Y}}^{1,\rmc}_s\rVert^2\\
        &\hskip0.5cm+\Big(\big\lVert p_1 q \lVert K\rVert_T (2p\lVert \overline{X}^{\rmnc}\rVert_T+ \lVert \overline{X}^{\rmc}\rVert_T) -0.5qp(\lVert \overline{Y}^{1,\rmnc}\rVert_T+ \lVert \overline{Y}^{1,\rmc}\rVert_T)\big\rVert\Big) \lVert \widetilde{\overline{X}}^{\rmnc}_s\rVert^2\\
        &\hskip0.5cm+\Big(\big\lVert p_1 q \lVert K\rVert_T (\lVert \overline{X}^{\rmnc}\rVert_T+ 2(1-p)\lVert \overline{X}^{\rmc}\rVert_T) -0.5q(1-p)(\lVert \overline{Y}^{1,\rmnc}\rVert_T+ \lVert \overline{Y}^{1,\rmc}\rVert_T)\big\rVert\Big) \lVert \widetilde{\overline{X}}^{\rmc}_s\rVert^2\\
        &\hskip0.5cm+\Big(\big\lVert p_1 q( p\lVert \overline{X}^{\rmnc}\rVert_T(\lVert \overline{X}^{\rmnc}\rVert_T+ \lVert \overline{X}^{\rmc}\rVert_T) +(1-p)\lVert \overline{X}^{\rmc}\rVert_T(\lVert \overline{X}^{\rmnc}\rVert_T+ \lVert \overline{X}^{\rmc}\rVert_T)) \\
        &\hskip1cm-\dfrac{r}{\cK}(\lVert \overline{Y}^{1,\rmnc}\rVert_T+ \lVert \overline{Y}^{1,\rmc}\rVert_T)\big\rVert\Big) \lVert \widetilde{K}_s\rVert^2 \Big]ds\\
        &= 2\int_t^T C^{(2,1)} \lVert \widetilde{\overline{Y}}^{1,\rmnc}_s\rVert^2  + C^{(2,2)} \lVert \widetilde{\overline{Y}}^{1,\rmc}_s\rVert^2 + C^{(2,3)} \lVert \widetilde{\overline{X}}^{\rmnc}_s\rVert^2 + C^{(2,4)} \lVert \widetilde{\overline{X}}^{\rmc}_s\rVert^2 + C^{(2,5)} \lVert \widetilde{K}_s\rVert^2 ds\\
        &\leq 2\int_t^T C^{(2,6)} \lVert \widetilde{\overline Y}_s^1\rVert^2ds + \int_0^T C^{(2,7)} \lVert \widetilde{\overline X}_s\rVert^2 ds\\
        &\leq C^{(2)} \int_0^T \lVert\widetilde{\overline{X}}_s\rVert^2 ds,
    \end{aligned}
\end{equation*}
where we simplified the notations by introducing labels for the constants in the first inequality. Then, we define $C^{(2,6)} = \max (C^{(2,1)}, C^{(2,2)}) $ and $C^{(2,7)}= C^{(2,5)} C^{(1)} T + \max (C^{(2,3)}, C^{(2,4)})$.
Here, $C^{(2)}=\exp(2TC^{(2,6)}) C^{(2,7)}$ is a constant depending on model parameters $p_1, q, p, r, K$ and the uniform bounds on processes $\overline{\bX}^{\rmnc}$ (similarly for $\overline{\bX}^{\rmc}$), $\bK$, and $\overline{\bY}^{1,\rmnc}$ (similarly for $\overline{\bY}^{1,\rmc}$), $\overline{X}_{max} (=\lVert \overline{X}^{\rmnc} \rVert_T=\lVert \overline{X}^{\rmc} \rVert_T)$, $K_{max}= \lVert K \rVert_T$, and $\overline{Y}^1_{max} (=\lVert \overline{Y}^{1,\rmnc} \rVert_T=\lVert \overline{Y}^{1,\rmc} \rVert_T)$, respectively.

\noindent\textbf{Step 1.3. Bounding the mapping $h_3$.}

Similar to the notations and approach in Step 1.2 of this proof, we have:
\begin{equation*}
    \begin{aligned}
        d\lVert \widetilde{\overline{Y}}^{2}_t\rVert^2 &=2 (\widetilde{\overline{Y}}^{2}_t)^{\top}  d  \widetilde{\overline{Y}}^{2}_t\\
        &=-2 \widetilde{\overline{Y}}^{2, \rmnc}_t \Big[ -p_0 \widetilde{K}_t + p_1 q \big( (K_t^1)^2 (p\overline{X}_t^{\rmnc,1}+(1-p)\overline{X}_t^{\rmc,1}) - (K_t^2)^2 (p\overline{X}_t^{\rmnc,2}+(1-p)\overline{X}_t^{\rmc,2}) \big)+ c_2^{\rmnc} \widetilde{\overline X}_t^{\rmnc}\Big]dt\\
        \\
        &\hskip3mm -q(1-p)\big(K_t^1\overline{Y}_t^{1,\rmc,1}-K_t^2\overline{Y}_t^{1,\rmc,2}\big) + p_1 q (1-p) \big((K_t^1)^2 \overline{X}_t^{\rmc,1}-(K_t^2)^2 \overline{X}_t^{\rmc,2}\big)\Big]dt
    \end{aligned}
\end{equation*}
By adding and subtracting the cross terms, we end up with:
\begin{equation*}
    \begin{aligned}
        \lVert \widetilde{\overline{Y}}_t^2\rVert^2 &= 2\int_t^T \Bigg(\Big\lVert-p_0 +2p_1q p\lVert K\rVert_T \lVert X^{\rmnc}\rVert_T + 2p_1 q(1-p)K\rVert_T \lVert X^{\rmc}\rVert_T \Big\rVert < \widetilde{\overline Y}_s^{2, \rmnc}, \widetilde{K}_s>\\
        &\hskip1cm +\Big\lVert p_1 q p \lVert K\rVert^2_T +c_2^{\rmnc}\Big\rVert< \widetilde{\overline Y}_s^{2, \rmnc}, \widetilde{\overline X}_s^{\rmnc}>+ \Big\lVert p_1 q (1-p) \lVert K\rVert^2_T\Big\rVert< \widetilde{\overline Y}_s^{2, \rmnc}, \widetilde{\overline X}_s^{\rmc}>\\
        &\hskip1cm+\Big\lVert-p_0 +2p_1q p\lVert K\rVert_T \lVert X^{\rmnc}\rVert_T + 4p_1 q(1-p)K\rVert_T \lVert X^{\rmc}\rVert_T -q(1-p) \lVert \overline{Y}^{1,\rmc}\rVert_T  \Big\rVert < \widetilde{\overline Y}_s^{2, \rmc}, \widetilde{K}_s>\\
        &\hskip1cm+\Big\lVert p_1 q p \lVert K\rVert^2_T\Big\rVert< \widetilde{\overline Y}_s^{2, \rmc}, \widetilde{\overline X}_s^{\rmnc}>+ \Big\lVert 2p_1 q (1-p) \lVert K\rVert^2_T+c_2^{\rmc }\Big\rVert< \widetilde{\overline Y}_s^{2, \rmc}, \widetilde{\overline X}_s^{\rmc}>\\
        &\hskip1cm+ \Big\lVert  q (1-p) \lVert K\rVert_T\Big\rVert < \widetilde{\overline Y}_s^{2, \rmc}, \widetilde{\overline Y}_s^{1, \rmc}> \Bigg)ds\\
        &\leq\int_t^T\Big(C^{(3,1)} \lVert \widetilde{\overline{Y}}^{2,\rmnc}_s\rVert^2  + C^{(3,2)} \lVert \widetilde{\overline{Y}}^{2,\rmc}_s\rVert^2 + C^{(3,3)} \lVert \widetilde{\overline{X}}^{\rmnc}_s\rVert^2+ C^{(3,4)} \lVert \widetilde{\overline{X}}^{\rmc}_s\rVert^2 + C^{(3,5)} \lVert \widetilde{\overline{Y}}^{1,\rmc}_s\rVert^2 + C^{(3,6)} \lVert \widetilde{K}_s\rVert^2\Big) ds\\
        &\leq \int_t^T C^{(3,7)} \lVert \widetilde{\overline Y}_s^2\rVert^2 ds+\int_0^T C^{(3,8)} \lVert \tilde{\overline X}_s\rVert^2 ds\\
        &\leq C^{(3)} \int_0^T \lVert\widetilde{\overline{X}}_s\rVert^2 ds.\\ 
    \end{aligned}
\end{equation*}
where we used the fact that $a^2+b^2\geq 2ab$ and simplified the notations by introducing labels for the coefficients. Then we define $C^{(3,7)} = \max (C^{(3,1)}, C^{(3,2)}) $ and 
$C^{(3,8)}= T(C^{(3,5)} C^{(2)}+C^{(3,6)} C^{(1)})  + \max (C^{(3,3)}, C^{(3,4)})$.
Here, $C^{(3)}=\exp(TC^{(3,7)}) C^{(3,8)}$ is a constant depending on model parameters $p_1, q, p, r, K$ and the uniform bounds on processes $\overline{\bX}^{\rmnc}$ (similarly for $\overline{\bX}^{\rmc}$), $\bK$, and $\overline{\bY}^{1,\rmnc}$ (similarly for $\overline{\bY}^{1,\rmc}$), $\overline{X}_{max} (=\lVert \overline{X}^{\rmnc} \rVert_T=\lVert \overline{X}^{\rmc} \rVert_T)$, $K_{max}= \lVert K \rVert_T$, and $\overline{Y}^1_{max} (=\lVert \overline{Y}^{1,\rmnc} \rVert_T=\lVert \overline{Y}^{1,\rmc} \rVert_T)$, respectively.

\noindent\textbf{Step 1.4. Bounding the mapping $h_4$.}

Finally, we have
\begin{equation*}
    \begin{aligned}
        d\lVert \widetilde{\overline{X}}^{\prime}_t\rVert^2 &=2(\widetilde{\overline{X}}^{\prime}_t)^{\top} d\widetilde{\overline{X}}^{\prime}_t\\
        &= 2 \widetilde{\overline{X}}^{\rmnc,\prime}_t\Big(-\dfrac{\widetilde{Y}_t^{2,\rmnc}}{c_3^{\rmnc}}\Big)dt + 2\widetilde{\overline{X}}^{\rmc,\prime}_t\Big(-\dfrac{\widetilde{Y}_t^{2,\rmc}}{c_3^{\rmc}}\Big)dt 
    \end{aligned}
\end{equation*}    
        Then, we have
        \begin{equation*}
            \begin{aligned}
                \lVert \widetilde{\overline{X}}^{\prime}_t\rVert^2&=2\int_0^t-\frac{1}{c_3^{\rmnc}}<\widetilde{\overline{X}}_s^{\rmnc,\prime},\widetilde{Y}_s^{2,\rmnc}> - \frac{1}{c_3^{\rmc}}<\widetilde{\overline{X}}_s^{\rmc,\prime}, \widetilde{Y}_s^{2,\rmc}> ds\\
                &\leq \frac{1}{\min(c_3^{\rmnc}, c_3^{\rmc})} \int_0^t \big(\lVert\widetilde{\overline{X}}_s^{\prime}\rVert^2+\lVert\widetilde{Y}_s^{2}\rVert^2\big)
                \\
                &\leq \exp(T/\min(c_3^{\rmnc}, c_3^{\rmc}))\frac{T C^{(3)}}{\min(c_3^{\rmnc}, c_3^{\rmc})}\int_0^T \lVert\widetilde{\overline{X}}_s\rVert^2 ds
            \end{aligned}
        \end{equation*}

        We define $C^{(4)} = \exp(T/\min(c_3^{\rmnc}, c_3^{\rmc}))\frac{T C^{(3)}}{\min(c_3^{\rmnc}, c_3^{\rmc})}$. Then we conclude that
\begin{equation*}
    \lVert \widetilde{\overline{X}}^{\prime}\rVert_T \leq C^{(4)}T \lVert \widetilde{\overline{X}}\rVert_T.
\end{equation*}
Under small time $T$, we have $C^{(4)}T<1$ which shows that mapping $h$ is contraction. By using Banach fixed point theorem, we can conclude that there exist unique mean process flows $(\overline{\bX}, \boldsymbol{K}, \overline{\bY}^1, \overline{\bY}^2)$ where $\overline{\bX} = (\overline{bX}^{\rmnc}, \overline{bX}^{\rmc})$, $\overline{\bY}^1 = (\overline{\bY}^{1,\rmnc}, \overline{\bY}^{1,\rmc})$, and $\overline{\bY}^2 = (\overline{\bY}^{2,\rmnc}, \overline{\bY}^{2,\rmc})$. Since $\overline{\hat{\alpha}}^{\rmnc}_t = -\overline{Y}_t^{2, \rmnc}/ c^{\rmnc}_3$ and $\overline{\hat{\alpha}}^{\rmc}_t = -\overline{Y}_t^{2, \rmc}/ c^{\rmc}_3$, we also conclude existence and uniqueness for the mean equilibrium control processes $\overline{\hat{\balpha}}= (\overline{\hat{\balpha}}^{\rmnc}, \overline{\hat{\balpha}}^{\rmc})$.
\vskip3mm
\noindent \textbf{Step 2: Showing existence and uniqueness of the state and adjoint processes in the linear form.}

In the \textbf{Step 1}, we showed that under small time assumption, mean processes and the common pool resource process exist and unique. Given mean processes $\overline{\bX} = (\overline{\bX}^{\rmnc}, \overline{\bX}^{\rmc}), \overline{\bY}^1=(\overline{\bY}^{1,\rmnc}, \overline{\bY}^{1,\rmc}), \overline{\bY}^2=(\overline{\bY}^{2,\rmnc}, \overline{\bY}^{2,\rmc})$ and the common pool resource process $\bK$, then the FBSDE that characterizes the MP solution is coupled linearly. In this way, we focus only on the FBSDE system regarding $\bX, \bY^1, \bY^2$ and we write them in the vector form as follows:
\begin{equation*}
    \begin{aligned}
        dX_t &= M_t^1 Y_t^2 dt + \Sigma_1 dW_t\\
        dY^1_t &= (M_t^2 X_t + M_t^3 Y_t^1) dt + \Sigma_2 dW_t\\
        dY^2_t &= (M_t^4 X_t + M_t^5) dt + \Sigma_3 dW_t\\
    \end{aligned}
\end{equation*}
where $M_t^1, M_t^2, M_t^3, M_t^4$ are matrices and $M_t^5$ is a vector depending on the model parameters, mean processes $ \overline{\bX}, \overline{\bY}^{1}, \overline{\bY}^{2}$, and the common pool resource process $\bK$ such that
{\small\begin{equation*}\arraycolsep3pt 
    M_t^1 =  \begin{bmatrix}
        -1/c_3^{\rmnc} & 0\\
        0 & -1/c_3^{\rmc}
    \end{bmatrix},
    M_t^2 =- \begin{bmatrix}
        2p_1 q K_t (p \overline{X}_t^{\rmnc} +(1-p)\overline{X}_t^{\rmc}) -p_0 & 0\\
        0 & 2p_1 q K_t (p \overline{X}_t^{\rmnc} +(1-p)\overline{X}_t^{\rmc}) -p_0 
        \end{bmatrix},
\end{equation*}\vskip0mm
\begin{equation*}\arraycolsep3pt 
    M_t^3 = -\begin{bmatrix}
       r-(2r/K) K_t-q (p \overline{X}_t^{\rmnc} +(1-p)\overline{X}_t^{\rmc}) & 0\\
       0& r-(2r/K) K_t-q (p \overline{X}_t^{\rmnc} +(1-p)\overline{X}_t^{\rmc})
        \end{bmatrix},
        M_t^4 =\begin{bmatrix}
        -c_2^{\rmnc} & 0\\
        0 & -c_2^{\rmc}
    \end{bmatrix},
\end{equation*}\vskip0mm
\begin{equation*}\arraycolsep3pt 
M_t^5 = - \begin{bmatrix}
-p_0K_t +p_1 q K_t^2 (p \overline{X}_t^{\rmnc} +(1-p)\overline{X}_t^{\rmc}) +c_1^{\rmnc}\\
-p_0K_t +p_1 q K_t^2 (p \overline{X}_t^{\rmnc} +(1-p)\overline{X}_t^{\rmc}) +c_1^{\rmc} -qK_t (1-p) \overline{Y}_t^{1, \rmc} + p_1 q K_t^2 (1-p) \overline{X}_t^{\rmc}
\end{bmatrix}.
\end{equation*}}

We see that given the mean processes the dynamics for $\bX$ and $\bY^2$ are linearly coupled. Therefore, we first start by proposing a linear ansatz for $\bY^2$ such that $Y_t^2 = A_t X_t +B_t$ where $(A_t, B_t)_{t\in[0,T]}$ are deterministic functions of time that are matrix and vector valued, respectively. After taking derivative of this ansatz and plugging it in the equations and matching the terms, we conclude that:
\begin{equation}
\begin{aligned}
    \dot{A}_t + A_tM_t^1 A_t - M_t^4 &= 0\\
    \dot{B}_t+A_tM_t^1 B_t - M_t^5 &=0
\end{aligned}
\end{equation}
where $A_T=\boldsymbol{0}$ and $B_T=\boldsymbol{0}$. The first equation is a matrix Riccati equation with constant coefficients. This equation has a unique, positive symmetric, continuous, solution, see~\cite[Chapter 14.3]{kucera_2009} and~\cite{Abou-Kandil_Freiling_Ionescu_Jank_2003}. After plugging in $\bA$, the second equation is a linear ODE system that has time dependent coefficients. Since the coefficients are continuous and locally bounded in the time interval $[0, T]$, there exists a unique continuous solution for $\boldsymbol{B}$, see~\cite[Chapter 1]{Abou-Kandil_Freiling_Ionescu_Jank_2003}.

By plugging in the ansatz for $\bY^2$ (i.e., $Y_t^2 = A_t X_t +B_t$) in the SDE for $\bX$ we can write
\begin{equation*}
    dX_t = (M_t^1 A_t X_t + M_t^1 B_t) dt +\sigma dW_t.
\end{equation*}
Since the drift and volatility terms are Lipschitz continuous in $x$ and they satisfy the linear growth condition in $x$, there exists a unique strong solution.

Finally for solving for $\bY^1$, we assume an ansatz for $\bY^1$ such that $Y_t^1 = C_t X_t + D_t$ where $(C_t, D_t)_{t\in[0,T]}$ are deterministic functions of time that are matrix and vector valued, respectively. After taking derivative of the ansatz and plugging it in the equations and matching the terms, we conclude that:
\begin{equation*}
    \begin{aligned}
        &\dot{C}_t + C_t M_t^1 A_t - M_t^3 C_t -M_t^2 = 0, && C_T=\boldsymbol{0}\\
        &\dot{D}_t +C_t M_t^1 B_t - M_t^3 D_t=0, && D_T=\boldsymbol{0}.
    \end{aligned}
\end{equation*}
The equations above are linear ODE systems with time dependent coefficients. The first one is uncoupled and it can be solved itself, then plugging in $\boldsymbol{C}$, the second system one can be solved. Since $\bK, \overline{\bX}, \bA$ are continuous, the first linear ODE has a unique (and continuous) solution. Since $\bK, \overline{\bX}, \boldsymbol{B}, \boldsymbol{C}$ are continuous, the second linear ODE also has a unique (and continuous) solution. 

This concludes that there exists a unique mixed population mean field equilibrium for the fisher problem.
\end{proof}

\end{document}